\theoremstyle{definition}
\newtheorem{mydef}{Definition}[section]
\newtheorem{lem}[mydef]{Lemma}
\newtheorem{thm}[mydef]{Theorem}
\newtheorem{cor}[mydef]{Corollary}
\newtheorem{hypothesis}[mydef]{Hypothesis}
\newtheorem{prop}[mydef]{Proposition}
\newtheorem{remark}[mydef]{Remark}
\newtheorem{fact}[mydef]{Fact}
\newcommand{\fct}[2]{{}^{#1}#2}
\newcommand{\bigN}{\widehat{N}}
\newcommand{\rest}{\upharpoonright}
\newcommand{\s}{\mathfrak{s}}
\newbox\noforkbox \newdimen\forklinewidth
\noforkbox\hbox{\lower 2pt\box1\lower
2pt\box0\relax}
\def\unionstick{\mathop{\copy\noforkbox}\limits}
\def\nf{\unionstick}
\newcommand{\nfs}[4]{#2 \nf_{#1}^{#4} #3}
\newbox\doesforkbox
\doesforkbox\hbox{\lower 0pt\box1 \lower
2pt\box2\lower2pt\box0\relax}
\def\1nf{\unionstick^{(1)}}
\def\2nf{\unionstick^{(2)}}
\def\3nf{\unionstick^{(3)}}
\def\lta{<}
\def\lea{\le}
\def\gta{>}
\def\gea{\ge}
\def\ltu{\lta_{\text{univ}}}
\def\gtu{\gta_{\text{univ}}}
\newcommand{\ltl}[2]{\lta_{#1,#2}}
\newcommand{\gtl}[2]{\gta_{#1,#2}}
\def\ltg{\vartriangleleft}
\def\gtg{\vartriangleright}
\newcommand{\hanf}[1]{h (#1)}
\newcommand{\hanfn}[1]{h_{#1}}
\newcommand{\goodm}{\text{good}^-}
\newcommand{\goodms}[1]{\text{good}^{-#1}}
\def\forkindep{\mathrel{\raise0.2ex\hbox{\ooalign{\hidewidth$\vert$\hidewidth\cr\raise-0.9ex\hbox{$\smile$}}}}}
\title{Forking and superstability in tame AECs}
\date{\today\\
AMS 2010 Subject Classification: Primary: 03C48. Secondary: 03C45, 03C52, 03C55.} % delete this line to display the current date
\author{Sebastien Vasey}
\thanks{The author is supported by the Swiss National Science Foundation.}
\email{sebv@cmu.edu}
\urladdr{http://math.cmu.edu/\textasciitilde svasey/}
\address{Department of Mathematical Sciences, Carnegie Mellon University, Pittsburgh, Pennsylvania, USA}
\begin{document}

\begin{abstract}
We prove that any tame abstract elementary class categorical in a suitable cardinal has an eventually global good frame: a forking-like notion defined on all types of single elements. This gives the first known general construction of a good frame in ZFC. We show that we already obtain a well-behaved independence relation assuming only a superstability-like hypothesis instead of categoricity. These methods are applied to obtain an upward stability transfer theorem from categoricity and tameness, as well as new conditions for uniqueness of limit models.
\end{abstract}

\maketitle

\tableofcontents

\section{Introduction}

In 2009, Shelah published a two volume book \cite{shelahaecbook, shelahaecbook2} on classification theory for abstract elementary classes. The central new structural notion is that of a good $\lambda$-frame (for a given abstract elementary class (AEC) $K$): a generalization of first-order forking to types over models of size $\lambda$ in $K$ (see Section \ref{good-frames-subsec} below for the precise definition). The existence of a good frame shows that $K$ is very well-behaved at $\lambda$ and the aim was to use this frame to deduce more on the structure of $K$ above $\lambda$. Part of this program has already been accomplished through several hundreds of pages of hard work (see for example \cite{sh576}, \cite[Chapter 2 and 3]{shelahaecbook}, \cite{jasi,jrsh875,jash940-v1, jarden-prime}). Among many other results, Shelah shows that good frames exist under strong categoricity assumptions and additional set-theoretic hypotheses:

\begin{fact}[Theorem II.3.7 in \cite{shelahaecbook}]\label{good-frame-existence}
  Assume $2^{\lambda} < 2^{\lambda^+} < 2^{\lambda^{++}}$ and the weak diamond ideal in $\lambda^+$ is not $\lambda^{++}$-saturated.

  Let $K$ be an abstract elementary class with $\text{LS} (K) \le \lambda$. Assume:

  \begin{enumerate}
    \item $K$ is categorical in $\lambda$ and $\lambda^+$.
    \item $0 < I(\lambda^{++}, K) < \mu_{\text{unif}} (\lambda^{++}, 2^{\lambda^{+}})$
  \end{enumerate}

  Then $K$ has a good $\lambda^+$-frame.
\end{fact}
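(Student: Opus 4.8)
The plan is to follow the construction of Shelah in \cite{shelahaecbook}: extract from the hypotheses the standard structural properties in $\lambda$, build a \emph{weakly successful} good $\lambda$-frame on basic types, and then transport it up to $\lambda^+$. \textbf{Step 1 (structure in $\lambda$).} Since $2^\lambda < 2^{\lambda^+}$ the weak diamond holds at $\lambda^+$, so the non-structure machinery applies: were amalgamation in $\lambda$ to fail, $K$ would have $2^{\lambda^+}$ pairwise non-isomorphic models of size $\lambda^+$, contradicting categoricity in $\lambda^+$. Hence $K_\lambda$ has amalgamation; joint embedding and ``no maximal models'' in $\lambda$ follow (the latter because each $M \in K_\lambda$ embeds into the model of size $\lambda^+$, whence a proper $\lambda$-sized extension can be carved out). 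Categoricity in $\lambda$ and $\lambda^+$ then force stability in $\lambda$ (instability would again over-produce models in $\lambda^+$), the categoricity models in $\lambda$ and in $\lambda^+$ are superlimit, and the model of size $\lambda^+$ is saturated. This yields the skeleton required of a good $\lambda$-frame: a superlimit, amalgamation, JEP, no maximal models, and stability.

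\textbf{Steps 2--3 (a weakly successful good $\lambda$-frame).} I would define the candidate frame $\s$ on $K_\lambda$ with basic types the minimal types (or the variant used in \cite{shelahaecbook}) and nonforking governed by their minimality/splitting behaviour; stability and the superlimit give invariance, monotonicity, transitivity, continuity and existence of nonforking extensions, while density of basic types follows once more from categoricity in $\lambda^+$. The crux is then the density of \emph{uniqueness triples}: showing that every $(M,a,N) \in K^{3,\bs}_\lambda$ is dominated by some $(M',a,N')$ for which $\mathrm{tp}(a/M')$ has, up to isomorphism over $M'$, a unique amalgamation-respecting extension. If this fails one runs the height-$\lambda^{++}$ tree-of-amalgamations non-structure argument, building at least $\mu_{\mathrm{unif}}(\lambda^{++},2^{\lambda^+})$ pairwise non-isomorphic models of size $\lambda^{++}$ — and this is precisely where $2^{\lambda^+} < 2^{\lambda^{++}}$ (weak diamond at $\lambda^{++}$) and the failure of $\lambda^{++}$-saturation of the weak diamond ideal on $\lambda^+$ are used to glue the tree together — contradicting $I(\lambda^{++}, K) < \mu_{\mathrm{unif}}(\lambda^{++}, 2^{\lambda^+})$. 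Hence $\s$ is weakly successful, and the associated relation $\mathrm{NF}$ of nonforking amalgamation of models in $K_\lambda$ is well defined, monotone, symmetric, and satisfies long transitivity, extension and uniqueness.

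\textbf{Step 4 (going up to $\lambda^+$).} By categoricity in $\lambda^+$ the class of $\lambda^+$-saturated models of size $\lambda^+$ is all of $K_{\lambda^+}$, and it inherits amalgamation, JEP, no maximal models, and (from the $\lambda$-frame) stability in $\lambda^+$ and a superlimit. Define $\s^+$ on $K_{\lambda^+}$ by taking every non-algebraic type to be basic and declaring that $p \in \mathrm{gS}(N)$ does not $\s^+$-fork over $M \le_K N$, both of size $\lambda^+$, exactly when this is witnessed by $\mathrm{NF}$ holding between $\le_K$-increasing $\lambda$-sized resolutions of $M$ and of $N$. Each good $\lambda^+$-frame axiom for $\s^+$ — density of basic types (now trivial), invariance, monotonicity, local character, uniqueness, symmetry, nonforking extension, and continuity — then reduces to the corresponding property of $\mathrm{NF}$ and of $\s$ established in Steps 2--3. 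This produces the desired good $\lambda^+$-frame.

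\textbf{The main obstacle} is Step 3. The density of uniqueness triples, and with it the whole weak-successfulness apparatus, hinges on delicate non-structure theorems engineered around the somewhat ad hoc hypotheses — the chain $2^\lambda < 2^{\lambda^+} < 2^{\lambda^{++}}$, the unsaturation of the weak diamond ideal, and heavy book-keeping over trees of height $\lambda^{++}$ — which is also exactly the reason this construction has so far resisted a proof in ZFC.
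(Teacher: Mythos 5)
The paper does not prove this statement at all: it is quoted as a \emph{Fact} with a bare citation to Theorem~II.3.7 of \cite{shelahaecbook}, so the only ``paper proof'' to compare against is Shelah's own argument in the book.

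Your outline does not follow that argument, and the route you do take has a genuine gap. Shelah's proof of II.3.7 builds the good $\lambda^+$-frame \emph{directly at $\lambda^+$}: after extracting the structural consequences in Step~1 (with which your sketch agrees), he takes as basic types the \emph{minimal} types over the superlimit (= saturated) models in $K_{\lambda^+}$, defines nonforking via unique non-algebraic extensions over a $\lambda$-sized base, and the one non-structure input is the \emph{density of minimal types} over saturated models of size $\lambda^+$, proved by the tree-of-models argument in $\lambda^{++}$ under the stated set-theoretic hypotheses. No good $\lambda$-frame, and no uniqueness triples, are involved at this stage. Your Steps~2--4 instead propose to first build a good $\lambda$-frame, prove it is weakly successful, derive the $\mathrm{NF}$ relation, and then lift to $\lambda^+$. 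That is the Chapter~II.6/II.8--III machinery, and there are two problems with it here. First, weak successfulness alone does not produce a good $\lambda^+$-frame: Shelah's lifting requires the frame to be \emph{successful} (weak successfulness together with a chain-continuity/independence condition for $\mathrm{NF}$ across $\lambda^+$), which you never address and which is not automatic. Second, the hypotheses of the statement (categoricity in $\lambda$ and $\lambda^+$, few models in $\lambda^{++}$) are calibrated for the $\lambda^+$-level construction; your Step~2 simply asserts that they yield a good $\lambda$-frame (in particular local character, uniqueness and symmetry for a splitting/minimality notion on $K_\lambda$), which is the real work and is not established by the ingredients you list. So the key lemma you identify -- density of uniqueness triples -- is not the key lemma of Shelah's proof, and the detour through weak successfulness creates an obligation (successfulness) that your argument leaves unmet.
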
 

It is a major open problem whether the set-theoretic hypotheses in Fact \ref{good-frame-existence} are necessary. In this paper, we show that if the class already has some \emph{global} structure, then good frames are much easier to build. For example we prove, \emph{in ZFC} (see Theorem \ref{main-thm-categ}):

\begin{thm}\label{intro-categ-succ}
  Let $K$ be an abstract elementary class with amalgamation and no maximal models. Assume $K$ is categorical in a high-enough\footnote{In fact, $\lambda$ can be taken to be above $\hanf{\hanf{\hanf{\text{LS} (K)}}^+}$, where $\hanf{\mu} = \beth_{(2^{\mu})^+}$.} successor $\lambda^+$. Then $K$ has a type-full good $\lambda$-frame.
\end{thm}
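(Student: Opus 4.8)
The plan is to derive from categoricity in $\lambda^+$ enough global structure — stability, a weak tameness property, and a superstability-like property — to fall inside the scope of the frame-building machinery for tame AECs, and then to exhibit the good $\lambda$-frame as a splitting-based independence relation.

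First I would set up the categoricity interval. Since $K$ has amalgamation, no maximal models, and is categorical in the \emph{successor} $\lambda^+$, Shelah's downward categoricity transfer from a successor (see \cite{shelahaecbook}) yields that $K$ is categorical in every $\mu$ with $\hanf{\text{LS}(K)} \le \mu \le \lambda^+$; in particular $K$ is categorical at $\lambda$ and throughout $[\hanf{\text{LS}(K)}, \lambda]$, hence Galois-stable there, with a monster model, and with all of its models of size in that interval saturated. From categoricity in this interval I would extract the two properties I actually need. (i) By the Shelah--Villaveces and Grossberg--VanDieren analysis of $\mu$-splitting, $K$ is $\mu$-superstable for every $\mu$ in the interval: there are no arbitrarily long increasing chains witnessing $\mu$-splitting, and unions of chains of $\mu$-saturated models are $\mu$-saturated. (ii) Because $\lambda^+$ is above the triple-iterated Hanf number, categoricity forces weak tameness below $\lambda$: there is $\chi < \lambda$ such that Galois types over saturated models are determined by their restrictions to submodels of size $<\chi$. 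Since in the categoricity interval every model is saturated, (ii) is genuine $(<\chi)$-tameness on a tail of $K$, in particular the $(<\lambda)$-tameness relevant at $\lambda$.

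Next I would build the frame directly at $\lambda$. For $M_0 \le M \le N$ of size $\lambda$ and a nonalgebraic Galois type $p$ over $N$, declare $p$ basic and say $p$ does not fork over $M_0$ iff $p$ does not $\mu_0$-split over some size-$<\chi$ submodel of $M_0$ (for $\mu_0$ in the interval), equivalently iff $p$ is the non-splitting extension of $p \rest M_0$, which by tameness is unique. Then I would verify the good $\lambda$-frame axioms: invariance and monotonicity are formal; the basic stability requirement is stability in $\lambda$, which we have; existence of nonforking extensions follows from stability (extend via a non-splitting extension over a small submodel and push it up through the saturated model of size $\lambda$); uniqueness of nonforking extensions is exactly where $(<\lambda)$-tameness over saturated models enters; local character over increasing continuous chains is $\mu$-superstability (no long splitting chains), and continuity over such chains uses that unions of chains of saturated models are saturated; transitivity then follows formally from uniqueness and local character. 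Type-fullness — that we may take \emph{all} nonalgebraic types as basic, rather than a restricted subclass as in Shelah's original construction — is available precisely because tameness makes the splitting-based relation well-behaved for all types, not only minimal ones.

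The main obstacle is the symmetry axiom (and, relatedly, making everything above hold for all types so that the frame is truly type-full). The standard route to symmetry for non-splitting is via uniqueness of limit models of size $\lambda$: from $\mu$-superstability together with tameness one gets that any two limit models over a common base are isomorphic over it (the limit-model analysis going back to Shelah--Villaveces and developed by Grossberg--VanDieren, VanDieren, and Vasey), and from uniqueness of limit models one extracts symmetry of non-$\mu$-splitting by a delicate back-and-forth along towers of saturated models. A secondary, bookkeeping, obstacle is tracking the Hanf-number thresholds so that the categoricity interval genuinely reaches $\lambda$ while being wide enough for the superstability and weak-tameness inputs to be simultaneously available at $\lambda$ — this is what forces $\lambda \ge \hanf{\hanf{\hanf{\text{LS}(K)}}}$. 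Once symmetry is in hand the remaining axioms (continuity, non-triviality, and density of basic types, which is trivial here) are routine, giving a type-full good $\lambda$-frame on $K$.
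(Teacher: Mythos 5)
Your overall skeleton matches the paper's strategy: use Shelah's downward transfer from categoricity in a high-enough successor to get categoricity (hence stability, saturation, and no-maximal-models) on a long initial segment, extract weak tameness and $\aleph_0$-local character of $\mu$-splitting from that, and then take a splitting-based nonforking relation on types over (saturated) models of size $\lambda$ as the candidate good frame. The paper formalizes this in two layers (a ``skeletal'' $\mu$-frame carrying explicit witnesses $(M_0,M)$ for nonsplitting, then the transfer to $\mu^+$-model-homogeneous bases of size $\ge\mu^+$), but that is an organizational difference rather than a mathematical one, and your one-shot definition at $\lambda$ is morally the same object.

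Where you genuinely diverge — and where there is a gap — is the symmetry axiom. You propose to prove uniqueness of limit models of size $\lambda$ from superstability plus tameness, and then to extract symmetry from that by a tower/back-and-forth argument. But in the paper the logical dependence goes the other way: uniqueness of limit models (Theorem \ref{uniq-limit}) is \emph{derived from} the already-constructed good $\mathcal{F}$-frame, using Shelah's uniqueness-of-limit-models theorem for good frames (\cite[Lemma II.4.8]{shelahaecbook}), and the good frame already has symmetry when that result is invoked. So your route is circular here. Moreover, the independent inputs you cite for uniqueness of limit models did not, at the time, deliver what you need: the Shelah--Villaveces argument had a known error, VanDieren's fix works in a non-tame GCH setting, and the Grossberg--VanDieren--Villaveces proof carries a unidimensionality hypothesis. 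The paper instead obtains symmetry by a nonstructure argument: failure of symmetry for $\mathfrak{s}$ yields the $(2,\lambda)$-order property of length $\theta$ (Fact \ref{nosym-op}, going back to \cite[Theorem 5.14]{bgkv-v2}); provided $\theta \ge \hanf{\lambda}$, this gives the full order property and hence $2$-instability in $2^\lambda$, contradicting the stability transfer (Proposition \ref{eventual-stability}, ultimately Theorem \ref{lambdap-stability}). Getting $\theta$ large enough is precisely why the frame is first extended to $[\lambda,\hanf{\lambda})$ via Boney's transfer theorem and why the triple-iterated Hanf number appears. If you want to keep a uniqueness-of-limit-models route you would need an independent proof of that uniqueness which does not already presuppose a good frame with symmetry; otherwise you should replace that step with the order-property/instability contradiction.
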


By the main theorem of \cite{sh394}, the hypotheses of Theorem \ref{intro-categ-succ} imply $K$ is categorical in $\lambda$. On the other hand, we do not need any set-theoretic hypothesis and we do not need to know anything about the number of models in $\lambda^{++}$. Moreover, the frame Shelah constructs typically defines a notion of forking only for a restricted class of basic types (the minimal types). With a lot of effort, he then manages to show \cite[Section III.9]{shelahaecbook} that under some set-theoretic hypotheses one can always extend a frame to be type-full. In our frame, forking is directly defined for every type. This is technically very convenient and closer to the first-order intuition. Of course, we pay for this luxury by assuming amalgamation and no maximal models\footnote{After submitting this paper, we discovered that Shelah claims to build a good frame in ZFC from categoricity in a high-enough cardinal in Chapter IV of \cite{shelahaecbook}. We were unable to fully check Shelah's proof. At the very least, our construction using tameness is simpler and gives much lower Hanf numbers.}.

Our proof relies on two key properties of AECs. The first one is tameness (a locality property of Galois types, see Definition \ref{tameness-def}), and assuming it lets us relax the ``high-enough successor'' assumption in Theorem \ref{intro-categ-succ}, see Theorem \ref{main-thm-technical-categ}:

\begin{thm}\label{intro-categ-limit}
  Let $K$ be an abstract elementary class with amalgamation and no maximal models. Assume $K$ is $\mu$-tame and categorical in some cardinal $\lambda$ such that $\text{cf} (\lambda) > \mu$. Then $K$ has a type-full good $\ge \lambda$-frame. 
\end{thm}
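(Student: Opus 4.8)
\emph{Overview.} The plan is to reduce the statement to a \emph{superstability} hypothesis and then build the frame from a concrete, coheir-based independence relation (one could equally proceed axiomatically). First I would establish the following superstability package, witnessed from $\lambda$ on: $K$ is stable in every $\kappa \geq \lambda$, and $K$ has no long $\mu$-splitting chains above $\lambda$ --- for every increasing continuous $\seq{M_i : i < \delta}$ with $\delta$ limit and each $M_i \in K_{\geq\lambda}$, and every $p \in \text{gS}(M_\delta)$, there is $i < \delta$ with $p$ not $\mu$-splitting over $M_i$. The crucial observation is that categoricity in $\lambda$ with $\cf(\lambda) > \mu$ forces the model of size $\lambda$ to be saturated: a union of an increasing chain of $\mu$-saturated models of length $> \mu$ is $\mu$-saturated, and $\mu$-tameness promotes $\mu$-saturation to full saturation. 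Combined with the categoricity analysis of \cite{sh394} (which gives stability below $\lambda$, hence --- by tameness --- at $\lambda$ and above), the existence of this high-cofinality saturated model is exactly what rules out long splitting chains.

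\emph{The candidate frame.} Its underlying class is $K_{\geq\lambda}$, which inherits amalgamation, joint embedding and no maximal models from the hypotheses and from categoricity; the frame is type-full, so every nonalgebraic $p \in \text{gS}(M)$ is basic. For nonforking I would use coheir: for $M \preceq N$ in $K_{\geq\lambda}$ and $p \in \text{gS}(N)$, declare that $p$ does not fork over $M$ if $p$ is finitely satisfiable in $M$ inside a monster model. Invariance, monotonicity and disjointness are then immediate; existence of a coheir extension of any $q \in \text{gS}(M)$ to a prescribed $N \succeq M$ follows from stability (equivalently, the absence of the order property) from the first step; transitivity of finite satisfiability is routine.

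\emph{The substantive axioms.} What remains are uniqueness, local character, continuity and symmetry. For \textbf{uniqueness} the point is a witness property for coheir: using $\mu$-tameness I would reduce equality of two nonforking extensions to equality of their restrictions over submodels of size $\leq \mu$, and there the superstability and saturation from the first step pin a coheir down by its base --- this is where the hypotheses are really spent. For \textbf{local character} I would run the no-long-$\mu$-splitting-chains property through tameness to obtain both the chain version of the axiom and the fact that every $p \in \text{gS}(M)$ with $M \in K_{\geq\lambda}$ does not fork over some $M^- \preceq M$ of size $\lambda$. \textbf{Continuity} then drops out of uniqueness and local character along chains. \textbf{Symmetry} is the genuine obstacle: I would prove it first over saturated models --- where coheir, the $\mu$-splitting notion of nonforking and the abstract ``nice'' independence relation all coincide and symmetry can be extracted from the structure theory --- and then propagate it down to arbitrary models of $K_{\geq\lambda}$ using local character together with uniqueness. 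I expect symmetry to be the main difficulty, as it is the one axiom that does not read off directly from splitting and genuinely uses categoricity.

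\emph{Assembly.} By the first step, the stability requirement of a good frame, $|\text{gS}(M)| = \|M\|$ for $M \in K_{\geq\lambda}$, holds; density of basic types is vacuous since the frame is type-full; and the previous paragraphs supply the remaining axioms of Section \ref{good-frames-subsec}. Hence $K$ has a type-full good $\geq\lambda$-frame, which is Theorem \ref{main-thm-technical-categ}. The hypothesis $\cf(\lambda) > \mu$ is used only in the first step: it is what makes the categoricity model at $\lambda$ saturated, so that the resulting frame is good starting exactly at $\lambda$ and not merely above a Hanf number.
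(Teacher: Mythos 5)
Your high-level decomposition (superstability first, then build a type-full frame, with symmetry as the last and hardest axiom) matches the paper's, but the core technical choice — using coheir (finite satisfiability) as the nonforking relation — introduces a gap that the paper goes out of its way to avoid. You assert that ``existence of a coheir extension of any $q \in \text{gS}(M)$ to a prescribed $N \succeq M$ follows from stability.'' That is not true even in the first-order setting (there it follows from \emph{compactness}, not stability), and in an AEC without compactness it is a genuine open problem whether coheir has the extension property. This is exactly why the paper does not use coheir: it explicitly notes that \cite{bg-v7} must \emph{assume} the extension property for coheir as an axiom, ``and it is unclear when this holds, even assuming categoricity everywhere.'' The paper instead bases its forking on $\mu$-\emph{splitting over a shifted witness} ($p$ explicitly does not $\mu$-fork over $(M_0,M)$ when $M_0 \ltu M$ and $p$ does not $\mu$-split over $M_0$), because VanDieren's weak extension and weak uniqueness for splitting (Fact \ref{ns-uq-ext}) can be proven outright from amalgamation and universality; extension is then bootstrapped up to $K_{\geq\lambda}$ via tameness, local character and a careful direct-limit argument (Lemmas \ref{ext-nf-types}, \ref{lambdap-ext}), rather than claimed outright.

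Two smaller points. First, coheir as normally set up for AECs (in \cite{bg-v7}) needs tameness and type-shortness for long (often $<\kappa$-length) types, whereas the hypothesis here is only $\mu$-tameness for $1$-types; the paper deliberately works with a splitting-based relation on $1$-types precisely because that is all the hypothesis affords. Second, your remark that ``$\mu$-tameness promotes $\mu$-saturation to full saturation'' is stronger than what is available or needed: the paper only shows the categoricity model is $\mu^+$-model-homogeneous (Lemma \ref{categ-conseq}) and runs the entire construction at that level, since full saturation of the model of size $\lambda$ does not follow from high cofinality and $1$-type tameness alone. Your treatment of symmetry via the order property is in the right spirit (this is how the paper does it, Fact \ref{nosym-op} plus Corollary \ref{lambdap-sym}), but that argument is applied to a pre-frame already known to satisfy the other good-frame axioms, so it cannot be used to paper over the missing extension property for coheir.
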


That is, not only do we obtain a good $\lambda$-frame, but we can also extend this frame to any model of size $\ge \lambda$ (this last step essentially follows from earlier work of Boney \cite{ext-frame-jml}). Hence we obtain a global forking notion above $\lambda$, although only defined for 1-types. A forking notion for types of all lengths is obtained in \cite{bg-v7} (using stronger tameness hypotheses than ours) but the authors assume the extension property for coheir, and it is unclear when this holds, even assuming categoricity everywhere. Thus our result partially answers \cite[Question 7.1]{bgkv-v2} (which asked when categoricity together with tameness implies the existence of a forking-like notion for types of all lengths satisfying uniqueness, local character, and extension). We also obtain new theorems whose statements do not mention frames: 

\begin{cor}\label{stab-transfer}
  Let $K$ be an abstract elementary class with amalgamation and no maximal models. Assume $K$ is $\mu$-tame and categorical in some cardinal $\lambda$ such that $\text{cf} (\lambda) > \mu$. Then $K$ is stable everywhere.
\end{cor}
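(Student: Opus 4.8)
The plan is to derive the statement from the good frame produced by Theorem~\ref{intro-categ-limit}, together with a standard downward transfer of stability from categoricity. First I would apply Theorem~\ref{intro-categ-limit} to obtain a type-full good $\ge \lambda$-frame on $K$; in particular, restricting it to $K_\mu$ yields a good $\mu$-frame for every $\mu \ge \lambda$. The existence of a good $\mu$-frame implies that $K$ is stable in $\mu$: stability in $\mu$ is part of the definition of a good $\mu$-frame (and in any case follows quickly from the local character and uniqueness axioms via the usual counting of types over a resolution of a model in $K_\mu$ — here it is essential that the frame is type-full, so that the count bounds all Galois types and not merely a restricted class of basic ones). Hence $K$ is stable in every $\mu \ge \lambda$.

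It then remains to treat the cardinals $\mu$ with $\text{LS}(K) \le \mu < \lambda$. For these I would invoke that $K$ is categorical in $\lambda$ and has amalgamation and no maximal models, together with the classical fact that categoricity in a cardinal transfers stability downward, so that $K$ is stable in every $\mu \in [\text{LS}(K), \lambda)$ (this appears in \cite{sh394}; it is also a by-product of the analysis carried out in the proof of Theorem~\ref{intro-categ-limit}, which passes through stability below $\lambda$). Combining the two ranges gives stability in every $\mu \ge \text{LS}(K)$, i.e.\ $K$ is stable everywhere.

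Given Theorem~\ref{intro-categ-limit}, the corollary is therefore essentially a matter of assembling known ingredients, and I do not expect a genuine obstacle. The point that deserves a little care is the downward transfer at singular cardinals and near $\text{LS}(K)$ — one wants stability to reach all the way down, not merely at regular cardinals, and $\lambda$ itself may be singular — but this is handled by the cited results.
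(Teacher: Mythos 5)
Your proof is correct, but it takes a genuinely different and heavier route than the paper's. The paper proves this corollary as the $\mu' = \infty$ case of Theorem~\ref{main-thm-superstable}, whose argument uses only the ``skeletal'' machinery of Sections \ref{poor-man-frame}--\ref{good-frame-without-symmetry}: Fact~\ref{categ-facts} gives stability below the categoricity cardinal and $\aleph_0$-local character of $\mu$-splitting for $\ltu$-chains, and then Theorem~\ref{lambdap-stability} transfers stability upward by counting types with local character and uniqueness of $(\ge\mu)$-forking. Crucially, symmetry plays no role: stability is established logically prior to the order-property argument of Section~\ref{getting-sym}. Your route instead first invokes Theorem~\ref{intro-categ-limit} to get the full good $\ge\lambda$-frame (which does require symmetry, hence the Hanf number and order-property machinery) and then reads off stability from bs-stability and type-fullness. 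This is valid under the hypotheses of the corollary (full $\mu$-tameness is assumed, so the extra tameness needed for symmetry is available), but it buys less than it pays: the paper's proof shows stability transfer is a consequence of local character of splitting alone, which is why Theorem~\ref{main-thm-superstable} can be stated with only $(\mu,\mu')$-tameness for an arbitrary $\mu'$, whereas your argument needs the whole global frame. Your handling of the downward direction via \cite{sh394} matches the paper's, and your remark that type-fullness is what lets bs-stability control all of $S(M)$ is exactly Fact~\ref{frame-stability}, so no gap there.
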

\begin{remark}
  Shelah already established in \cite{sh394} that categoricity in $\lambda > \text{LS} (K)$ implies stability below $\lambda$ (assuming amalgamation and no maximal models). The first upward stability transfer for tame AECs appeared in \cite{tamenessone}. Later, \cite{b-k-vd-spectrum} gave some variations, showing for example $\aleph_0$-stability and a strong form of tameness implies stability everywhere. Our upward stability transfer improves on \cite[Corollary 4.7]{b-k-vd-spectrum} which showed that categoricity in a successor $\lambda$ implies stability in $\lambda$.
\end{remark}
\begin{cor}
  Let $K$ be an abstract elementary class with amalgamation and no maximal models. Assume $K$ is $\mu$-tame and categorical in some cardinal $\lambda$ such that $\text{cf} (\lambda) > \mu$. Then $K$ has a unique limit model\footnote{This holds even in the stronger sense of \cite[Theorem 3.3.7]{shvi635}, i.e.\ two limit models over the same base are isomorphic over the base.} in every $\lambda' \ge \lambda$.
\end{cor}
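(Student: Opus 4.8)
The plan is to read the corollary off the good frame produced by Theorem \ref{intro-categ-limit}. Fix $\lambda' \ge \lambda$. By Theorem \ref{intro-categ-limit}, $K$ carries a type-full good $\ge \lambda$-frame $\s$, and this restricts to a type-full good $\lambda'$-frame $\s'$ on $K_{\lambda'}$ (immediate from the definition of a good $\ge \lambda$-frame). Existence of limit models of size $\lambda'$ follows from stability in $\lambda'$ (Corollary \ref{stab-transfer}), or directly from the existence of nonforking extensions in $\s'$. So it remains to show: \emph{if $K$ carries a type-full good $\lambda'$-frame, then any two limit models of size $\lambda'$ over a common base $M_0 \in K_{\lambda'}$ are isomorphic over $M_0$}.

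Recall that a good frame has the symmetry property built in as one of its axioms. The easy half of the argument is a reduction on cofinalities: if $M_\ell$ is $(\lambda', \theta_\ell)$-limit over $M_0$ for $\ell = 1,2$ and $\cf(\theta_1) = \cf(\theta_2)$, then a routine back-and-forth along the two witnessing chains — matching the models up cofinally and using universality of each successor step — yields an isomorphism $M_1 \cong_{M_0} M_2$. So we may assume $\theta_1, \theta_2$ are regular and, say, $\theta_1 < \theta_2$. The substantive step is to show that the shorter limit model ``stays universal'': using symmetry together with local character and continuity of $\s'$-nonforking, one proves that $M_1$ is in fact $(\lambda', \theta_1)$-limit over \emph{every} $N \in K_{\lambda'}$ with $M_0 \le N \le M_1$. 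Granted this, a back-and-forth of length $\theta_2$ between $M_1$ and a witnessing chain for $M_2$ goes through: universality of the successor steps on the $M_2$-side lets one embed forward, the ``universal over every submodel'' property of $M_1$ (which is what keeps the construction alive past stage $\theta_1$) lets one embed back, unions are taken at limit stages, and the resulting coherent system of maps unions to an isomorphism fixing $M_0$.

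The main obstacle is exactly the claim that $M_1$ is $(\lambda', \theta_1)$-limit over each of its submodels of size $\lambda'$ containing $M_0$. This is the superstability-flavored core of the statement, and in the frame setting it is the incarnation of VanDieren's characterization of uniqueness of limit models via symmetry (for nonsplitting): one extracts it from the symmetry axiom by reflecting a hypothetical witness to failure down a continuous $\subseteq$-increasing chain, using local character of $\s'$ to pin down a ``bad'' base of size $\lambda'$, and then invoking symmetry to derive a contradiction. Since full symmetry for $\s'$ is part of what Theorem \ref{intro-categ-limit} already delivers, the argument runs uniformly for every $\lambda' \ge \lambda$ and, as the footnote to the corollary indicates, gives the conclusion in the strong base-fixing sense of \cite[Theorem 3.3.7]{shvi635}. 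Alternatively, one can bypass this discussion by simply quoting an existing ``good $\lambda'$-frame implies uniqueness of limit models in $\lambda'$'' theorem and feeding it the frame from Theorem \ref{intro-categ-limit}.
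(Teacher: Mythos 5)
Your plan matches the paper's: both obtain a good frame from the categoricity and tameness hypotheses and then invoke the general fact that a good $\lambda'$-frame gives uniqueness of limit models in $\lambda'$ -- the paper's proof of Theorem \ref{uniq-limit} cites \cite[Lemma II.4.8]{shelahaecbook} (with \cite[Theorem 9.2]{ext-frame-jml} as the detailed reference) for exactly this, which is the ``existing theorem'' you mention at the end. The hand-rolled back-and-forth in the middle of your write-up is therefore unnecessary and also slightly misstated (``$M_1$ is $(\lambda', \theta_1)$-limit over every $N$ with $M_0 \le N \le M_1$'' cannot hold at $N = M_1$), but since you close by reducing to the quoted frame-to-uniqueness theorem, the overall strategy is sound and coincides with the paper's.
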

\begin{remark}
  This is also new and complements the conditions for uniqueness of limit models given in \cite{sh394}, \cite{vandierennomax}, and \cite{gvv-v3}.
\end{remark}

The second key property in our proof is a technical condition we call local character of $\mu$-splitting for $\ltg$-chains (see Definition \ref{lc-ns-def}). This follows from categoricity in a cardinal of cofinality larger than $\mu$ and we believe it is a good candidate for a definition of superstability, at least in the tame context. Under this hypothesis, we already obtain a forking notion that is well-behaved for $\mu^+$-saturated base models and can prove the upward stability transfer given by Corollary \ref{stab-transfer}. Local character of splitting already played a key role in other papers such as \cite{shvi635}, \cite{vandierennomax}, and \cite{gvv-v3}. 

Even if this notion of superstability fails to hold, we can still look at the length of the chains for which $\mu$-splitting has local character (analogous to the cardinal $\kappa (T)$ in the first-order context). Using GCH, we can generalize one direction of the first-order characterization of the stability spectrum (Theorem \ref{main-thm-spectrum-2}).

The paper is structured as follows: In Section \ref{prelim}, we review background in the theory of AECs and give the definition of good frames. In Section \ref{poor-man-frame}, we fix a cardinal $\mu$ and build a $\mu$-frame-like object named a \emph{skeletal frame}. This is done using the weak extension and uniqueness properties of splitting isolated by VanDieren \cite{vandierenthesis}, together with the assumption of local character of splitting. In Section \ref{going-up-sec}, we show that some of the properties of our skeletal frame in $\mu$ lift to cardinals above $\mu$ (and in fact become better than they were in $\mu$). This is done using the same methods as in \cite[Section II.2]{shelahaecbook}. 

In Section \ref{good-frame-without-symmetry}, we show assuming tameness that the other properties of the skeletal frame lift as well and similarly become better, so that we obtain (if we restrict ourselves to $\mu^+$-saturated models and so, assuming categoricity in the right cardinal, to all models) all the properties of a good frame except perhaps symmetry. This uses the ideas from \cite{ext-frame-jml}. Next in Section \ref{getting-sym} we show how to get symmetry by using more tameness together with the order property (this is where we really use that we have structure properties holding globally and not only at a few cardinals). Finally, we put everything together in Section \ref{main-thm}. In Section \ref{conclusion}, we conclude.

At the beginning of Sections \ref{poor-man-frame}, \ref{going-up-sec}, \ref{good-frame-without-symmetry}, and \ref{getting-sym}, we give hypotheses that are assumed to hold everywhere in those sections. We made an effort to show clearly how much of the structural properties (amalgamation, tameness, superstability, etc.) are used at each step, but our construction is new even for the case of a totally categorical AEC $K$ with amalgamation, no maximal models, and $\text{LS} (K)$-tameness. It might help the reader to keep this case in mind throughout.

This paper was written while working on a Ph.D.\ thesis under the direction of Rami
Grossberg at Carnegie Mellon University and I would like to thank Professor Grossberg for his guidance and assistance in my research in general and in this work
specifically. I also thank John T. Baldwin, Will Boney, Adi Jarden, Alexei Kolesnikov, and the anonymous referee for valuable comments that helped improve the presentation of this paper.

\section{Preliminaries}\label{prelim}

\subsection{Abstract elementary classes}

We assume the reader is familiar with the definition of an abstract elementary class (AEC) and the basic related concepts. See \cite{grossberg2002} for an introduction. 

For the rest of this section, fix an AEC $K$. We denote the partial ordering on $K$ by $\lea$, and write $M \lta N$ if $M \lea N$ and $M \neq N$. For $R$ a binary relation on $K$ and $\delta$ an ordinal, an \emph{$R$-increasing chain} $(M_i)_{i < \delta}$ is a sequence of models in $K$ such that for all $i < \delta$, if $i + 1 < \delta$ then $R(M_i, M_{i + 1})$. The chain is \emph{continuous} if it is $\lea$-increasing and for any limit $i < \delta$, $M_i = \bigcup_{j < i} M_j$. When we talk of an increasing chain, we mean a $\lea$-increasing chain. Strictly increasing means $\lta$-increasing. 

For $K$ an abstract elementary class and $\mathcal{F}$ an interval\footnote{The definitions that follow make sense for an arbitrary set of cardinals $\mathcal{F}$, but the proofs of most of the facts below require that $\mathcal{F}$ is an interval.} of cardinals of the form $[\lambda, \theta)$, where $\theta > \lambda$ is either a cardinal or $\infty$, let $K_{\mathcal{F}} := \{M \in K \mid \|M\| \in \mathcal{F}\}$. We write $K_\lambda$ instead of $K_{\{\lambda\}}$, $K_{\ge \lambda}$ instead of $K_{[\lambda, \infty)}$ and $K_{\le \lambda}$ instead of $K_{[0, \lambda]}$.

The following properties of AECs are classical:

\begin{mydef}[Amalgamation, joint embedding, no maximal models] 
  Let $\mathcal{F}$ be an interval of cardinals as above.
  \begin{enumerate}
    \item $K_{\mathcal{F}}$ has \emph{amalgamation} if for any $M_0 \lea M_\ell \in K_{\mathcal{F}}$, $\ell = 1,2$ there exists $N \in K_{\mathcal{F}}$ and $f_\ell : M_\ell \xrightarrow[M_0]{} N$, $\ell = 1,2$.
    \item $K_{\mathcal{F}}$ has \emph{joint embedding} if for any $M_\ell \in K_{\mathcal{F}}$, $\ell = 1,2$ there exists $N \in K_{\mathcal{F}}$ and $f_\ell : M_\ell \rightarrow N$, $\ell = 1,2$.
    \item $K_{\mathcal{F}}$ has \emph{no maximal models} if for any $M \in K_{\mathcal{F}}$ there exists $N \gta M$ in $K_{\mathcal{F}}$.
  \end{enumerate}
\end{mydef}
\begin{fact}
  Let $\mathcal{F}$ be an interval of cardinals as above. 

  \begin{enumerate}
    \item If $K_\mu$ has no maximal models for all $\mu \in \mathcal{F}$, then $K_{\mathcal{F}}$ has no maximal models.
    \item If $K_\mu$ has amalgamation for all $\mu \in \mathcal{F}$, then $K_{\mathcal{F}}$ has amalgamation. 
  \end{enumerate}
\end{fact}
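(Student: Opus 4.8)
The plan is to settle part (1) outright and to prove part (2) by induction on cardinality, exploiting the fact that $\mathcal{F} = [\lambda, \theta)$ is an \emph{interval} to keep every model produced along the way inside $K_{\mathcal{F}}$. Part (1) is immediate: given $M \in K_{\mathcal{F}}$, set $\mu := \|M\| \in \mathcal{F}$; since $K_\mu$ has no maximal models there is $N \gta M$ with $N \in K_\mu \subseteq K_{\mathcal{F}}$, as desired.

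For part (2), fix $M_0 \lea M_\ell \in K_{\mathcal{F}}$ for $\ell = 1, 2$, and set $\chi := \|M_1\| + \|M_2\|$; since $\lambda \le \|M_\ell\| \le \chi = \max(\|M_1\|, \|M_2\|) < \theta$ and $\mathcal{F}$ is an interval, $\chi \in \mathcal{F}$. I would prove by induction on $\chi$ that there are $N \in K_\chi$ and $f_\ell : M_\ell \xrightarrow[M_0]{} N$. If $\chi = \lambda$, then $\|M_0\| = \|M_1\| = \|M_2\| = \lambda$ and amalgamation in $K_\lambda$ suffices; if $\chi > \lambda$ but $\|M_0\| = \chi$ (so all three models have size $\chi$), then amalgamation in $K_\chi$ suffices. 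Otherwise $\chi > \lambda$ and $\|M_0\| < \chi$: using the L\"owenheim--Skolem axiom and the AEC chain axioms, I would write each $M_\ell$ as the union of a $\lea$-increasing continuous chain $(M_\ell^i)_{i < \cf(\chi)}$ with $M_0 \lea M_\ell^0$ and $\lambda \le \|M_\ell^i\| < \chi$ for all $i$ (if $\|M_\ell\| < \chi$, take the constant chain $M_\ell^i = M_\ell$), so each $M_\ell^i \in K_{\mathcal{F}}$. Then I would construct a $\lea$-increasing continuous chain $(N_i)_{i < \cf(\chi)}$ in $K_{\mathcal{F}}$ with $\|N_i\| < \chi$, along with coherent embeddings $g_i^\ell : M_\ell^i \to N_i$ fixing $M_0$: at stage $0$, amalgamate $M_1^0$ and $M_2^0$ over $M_0$; at a successor $i + 1$, first amalgamate $M_1^{i+1}$ with $N_i$ over $M_1^i$ along $g_i^1$, then amalgamate $M_2^{i+1}$ with the resulting model over $M_2^i$ along $g_i^2$; at limits, take unions. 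Every amalgamation invoked here is legitimate by the induction hypothesis, since the two models amalgamated have size $< \chi$ and lie in $K_{\mathcal{F}}$. Finally set $N := \bigcup_{i < \cf(\chi)} N_i$ and $f_\ell := \bigcup_{i < \cf(\chi)} g_i^\ell$; then $f_\ell : M_\ell \xrightarrow[M_0]{} N$ and $\|N\| = \chi$ since $\max(\|M_1\|, \|M_2\|) \le \|N\| \le \chi$.

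The real content is the bookkeeping, not any single idea: choosing the resolutions so that $M_0 \lea M_\ell^0$; arranging the successor steps so that the embeddings $g_i^\ell$ genuinely cohere and hence glue at limit stages (this uses the AEC chain axioms for unions of $\lea$-increasing chains); and checking throughout that every model that appears — the $M_\ell^i$, the intermediate amalgams, the $N_i$, and the final $N$ — has cardinality in $[\lambda, \theta)$, so that the inductive hypothesis is applicable at each step and $N \in K_{\mathcal{F}}$ at the end. This last point is exactly where $\mathcal{F}$ being an interval (rather than an arbitrary set of cardinals) is essential. A minor additional subtlety is the case of singular $\chi$, where the chains have length $\cf(\chi) < \chi$ and the pieces $M_\ell^i$ must be chosen with cardinalities cofinal in $\chi$.
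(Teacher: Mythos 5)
Your proof is correct. Part (1) matches the paper's dismissal as straightforward, and for part (2) the paper simply defers to Shelah's Conclusion I.2.12 in \cite{shelahaecbook}; your explicit induction on $\chi := \|M_1\| + \|M_2\|$, resolving the $M_\ell$ into $\lea$-increasing continuous chains of smaller models in $K_{\mathcal{F}}$ and building a coherent directed system of amalgams, is exactly the standard argument underlying that citation, so the approach is essentially the same.
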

\begin{proof}
  No maximal models is straightforward and amalgamation is \cite[Conclusion I.2.12]{shelahaecbook}.
\end{proof}

Finally, we will also use:

\begin{lem}\label{jep-from-amalgamation}
  Let $\mathcal{F} = [\lambda, \theta)$ be an interval of cardinals as above.
  If $K_{\mathcal{F}}$ has amalgamation and $K_\lambda$ has joint embedding, then $K_{\mathcal{F}}$ has joint embedding.
\end{lem}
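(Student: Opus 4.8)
The plan is to reduce joint embedding in $K_{\mathcal{F}}$ to joint embedding in $K_\lambda$ by pushing everything down to the bottom of the interval via the Löwenheim--Skolem axiom, using amalgamation to reassemble the resulting embeddings. So let $M_1, M_2 \in K_{\mathcal{F}}$ be given. First I would apply the Löwenheim--Skolem--Tarski axiom of AECs to obtain $M_\ell' \lea M_\ell$ with $\|M_\ell'\| = \lambda$ for $\ell = 1, 2$; this is possible since $\lambda \in \mathcal{F}$ is the least cardinal in the interval and $\text{LS}(K) \le \lambda$ (which is implicit in $K_\lambda \ne \emptyset$, or can be assumed without loss of generality). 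Since $K_\lambda$ has joint embedding, there are $N_0 \in K_\lambda$ and embeddings $g_\ell : M_\ell' \to N_0$ for $\ell = 1, 2$.

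Next I would use amalgamation to lift these embeddings. Working one side at a time: $M_1' \lea M_1$ and $g_1 : M_1' \to N_0$ is an embedding, so applying amalgamation in $K_{\mathcal{F}}$ (to the span $M_1 \gea M_1' \xrightarrow{g_1} N_0$, after identifying $M_1'$ with its image $g_1[M_1'] \lea N_0$) yields $N_1 \in K_{\mathcal{F}}$ with $N_0 \lea N_1$ and an embedding $h_1 : M_1 \to N_1$ extending $g_1$. Then repeat on the other side: $M_2' \lea M_2$ and the composite $M_2' \xrightarrow{g_2} N_0 \lea N_1$ gives a span to which amalgamation applies, producing $N \in K_{\mathcal{F}}$ with $N_1 \lea N$ and an embedding $h_2 : M_2 \to N$ extending $g_2$. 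Composing $h_1$ with the inclusion $N_1 \lea N$ gives an embedding of $M_1$ into $N$, and $h_2$ embeds $M_2$ into $N$; since $N \in K_{\mathcal{F}}$, this witnesses joint embedding for $M_1$ and $M_2$.

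The one point requiring a little care is the bookkeeping about which classes amalgamation is applied in: the first amalgamation step takes place over a base of size $\lambda$ with one leg of size $\|M_1\| \in \mathcal{F}$, and since $\mathcal{F}$ is an interval $[\lambda, \theta)$, all models involved indeed lie in $K_{\mathcal{F}}$, so the hypothesis ``$K_{\mathcal{F}}$ has amalgamation'' applies directly; the same holds at the second step with $N_1$ in place of the base. I expect the main (minor) obstacle to be nothing deeper than verifying that the models produced by amalgamation stay within the interval $\mathcal{F}$ — which is immediate since amalgamation in $K_{\mathcal{F}}$ by definition returns a model in $K_{\mathcal{F}}$ — and checking that the embeddings compose correctly, which is routine diagram-chasing. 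No stability, tameness, or categoricity hypotheses are needed here; this is a purely structural fact about AECs with amalgamation.
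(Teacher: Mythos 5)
Your proof is correct and follows essentially the same route as the paper's one-line sketch: use Löwenheim--Skolem to pull out $\lambda$-sized submodels, apply joint embedding in $K_\lambda$, then amalgamate twice to reassemble. The paper leaves the two amalgamation steps implicit; you have simply written them out.
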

\begin{proof}[Proof sketch]
  Let $M_\ell \in K_{\mathcal{F}}$, $\ell = 1,2$. Pick $M_\ell' \lea M_\ell$ of size $\lambda$. Use joint embedding on $M_1', M_2'$, then use amalgamation.
\end{proof}

\subsection{Galois types, stability, and tameness}

We assume familiarity with Galois types (see \cite[Section 6]{grossberg2002}). For $M \in K$, we write $S^\alpha (M)$ for the set of Galois types of sequences of length $\alpha$ over $M$. We will at one point also consider types over the empty set, which are defined analogously (see e.g.\ \cite[Definition 1.4]{sh394-updated}). We write $S (M)$ for $S^1 (M)$. We write $S^{\text{na}} (M)$ for the set of \emph{nonalgebraic} 1-types over $M$, that is:

$$
S^{\text{na}} (M) := \{\text{gtp} (a / M; N) \mid a \in N \backslash M, M \lea N \in K\}
$$

From now on, we will write $\text{tp} (a / M; N)$ for $\text{gtp} (a / M; N)$.

We briefly review the notion of tameness. Although it appears implicitly (for saturated models) in \cite{sh394}, tameness as a property of AECs was first introduced in \cite{tamenessone} and used to prove a stability spectrum theorem. It was later used in \cite{tamenessthree} to prove an upward categoricity transfer. Our definition follows \cite[Definition 3.1]{tamelc-jsl}.

\begin{mydef}[Tameness]\label{tameness-def}
  Let $\lambda > \kappa \ge \text{LS} (K)$. Let $\alpha$ be a cardinal. We say that $K$ is \emph{$(\kappa, \lambda)$-tame for $\alpha$-length types} if for any $M \in K_{\le \lambda}$ and any $p, q \in S^\alpha (M)$, if $p \neq q$, then there exists $M_0 \in K_{\le \kappa}$ with $M_0 \lea M$ such that $p \upharpoonright M_0 \neq q \upharpoonright M_0$. We define similarly $(\kappa, <\lambda)$-tame, $(<\kappa, \lambda)$-tame, etc. When $\lambda = \infty$, we omit it. When $\alpha = 1$, we omit it. We say that $K$ is \emph{fully $\kappa$-tame} if it is $\kappa$-tame for all lengths.
\end{mydef}

We also recall that we can define a notion of stability:

\begin{mydef}[Stability]
  Let $\lambda \ge \text{LS} (K)$ and $\alpha$ be cardinals. We say that $K$ is \emph{$\alpha$-stable in $\lambda$} if for any $M \in K_\lambda$, $|S^\alpha (M)| \le \lambda$.

  We say that $K$ is \emph{stable} in $\lambda$ if it is $1$-stable in $\lambda$.

  We say that $K$ is \emph{$\alpha$-stable} if it is $\alpha$-stable in $\lambda$ for some $\lambda \ge \text{LS} (K)$. We say that $K$ is \emph{stable} if it is $1$-stable in $\lambda$ for some $\lambda \ge \text{LS} (K)$. We write ``unstable'' instead of ``not stable''. 

  We define similarly stability for $K_{\mathcal{F}}$, e.g.\ $K_{\mathcal{F}}$ is stable if and only if $K$ is stable in $\lambda$ for some $\lambda \in \mathcal{F}$.
\end{mydef}

\begin{remark}\label{monot-stab}
  If $\alpha < \beta$, and $K$ is $\beta$-stable in $\lambda$, then $K$ is $\alpha$-stable in $\lambda$.
\end{remark}

The following follows from \cite[Theorem 1.1]{longtypes-v2}.

\begin{fact}\label{stab-longtypes}
  Let $\lambda \ge \text{LS} (K)$. Let $\alpha$ be a cardinal. Assume $K$ is stable in $\lambda$ and $\lambda^\alpha = \lambda$. Then $K$ is $\alpha$-stable in $\lambda$.
\end{fact}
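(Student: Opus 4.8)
The plan is to reduce the statement to a single combinatorial observation: it is enough to produce, over each $M \in K_\lambda$, \emph{one} extension $N \in K_\lambda$ that realizes every type in $S^\alpha (M)$, because then $|S^\alpha (M)|$ is bounded by the number of $\alpha$-sequences from $N$, which is $\|N\|^{|\alpha|} = \lambda^{|\alpha|} = \lambda$. Throughout I read $\lambda^\alpha$ as $\lambda^{|\alpha|}$ and I work inside a monster model; amalgamation, joint embedding and no maximal models are available in the context where this Fact is applied, and for the fully general statement one can simply invoke \cite[Theorem 1.1]{longtypes-v2}. First I would clear away the cardinal arithmetic: from $\lambda^{|\alpha|} = \lambda$ we get $|\alpha| \le 2^{|\alpha|} \le \lambda^{|\alpha|} = \lambda$, so $\alpha$ is enumerated in at most $\lambda$ steps and $\|M\| + |\alpha| = \lambda$ for every $M \in K_\lambda$.

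The substantive input is the existence of universal extensions. I would recall the standard fact that $\lambda$-stability (together with amalgamation, joint embedding and no maximal models) implies that every $M \in K_\lambda$ has a $(\lambda, \omega)$-limit model $N$ over $M$, and that such an $N$ is universal over $M$ in the sense that every $M' \in K_{\le \lambda}$ with $M \le M'$ embeds into $N$ over $M$. This is exactly the place where the stability hypothesis is used: the limit model is built as a $\le$-increasing continuous chain $(M_i)_{i \le \omega}$ with $M_0 = M$ and each $M_{i + 1}$ realizing all of the (at most $\lambda$, by stability) types over $M_i$, and one then checks that $N := \bigcup_{i < \omega} M_i$ is universal over $M$.

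Granting $N$ as above, the rest is routine. Fix $M \in K_\lambda$ and $N \in K_\lambda$ universal over $M$. Let $p \in S^\alpha (M)$, say $p = \text{tp} (\bar{a} / M; N')$ with $M \le N'$. Since $\|M \cup \bar{a}\| \le \|M\| + |\alpha| = \lambda$, L\"owenheim--Skolem yields $N'' \le N'$ with $M \cup \bar{a} \subseteq N''$ and $\|N''\| = \lambda$; by universality there is $g : N'' \to N$ fixing $M$ pointwise, and then $g (\bar{a})$ realizes $p$ in $N$. Hence every $p \in S^\alpha (M)$ has a realization among the $\alpha$-sequences from $N$, so $|S^\alpha (M)| \le \|N\|^{|\alpha|} = \lambda$. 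Since $M$ was arbitrary, $K$ is $\alpha$-stable in $\lambda$.

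I expect the only real obstacle to be the invocation of the universal-extension fact, and in particular the point that $\lambda$-stability yields a universal extension \emph{of size exactly $\lambda$} (as opposed to only universal extensions of smaller, or only of larger, models); everything else is L\"owenheim--Skolem and elementary cardinal arithmetic. It is worth noting that no induction on $\alpha$ is needed, so in particular one never attempts to recover a Galois type of a long tuple from the types of its subtuples --- an operation that is genuinely problematic for Galois types, as the need for a separate ``$\alpha$-length'' clause in Definition \ref{tameness-def} already suggests.
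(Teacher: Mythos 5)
Your argument is correct, and it gives a concrete proof where the paper gives only a pointer: the paper's ``proof'' of Fact \ref{stab-longtypes} is a bare citation to \cite[Theorem 1.1]{longtypes-v2}, so there is no internal proof to compare against. Your route -- $\lambda$-stability together with amalgamation and no maximal models gives a universal extension $N \gtu M$ of cardinality exactly $\lambda$ (this is precisely the paper's Fact \ref{gtu-existence}, and the universality of a $(\lambda,\omega)$-limit is Proposition \ref{ltl-basic-props}(1)), L\"owenheim--Skolem reduces every $p \in S^\alpha(M)$ to a type over a size-$\lambda$ submodel $N''$ containing $M$ and the parameter tuple, and universality then realizes $p$ in $N$, giving $|S^\alpha(M)| \le \|N\|^{|\alpha|} = \lambda^{|\alpha|} = \lambda$ -- is the standard elementary argument, and it is essentially what the cited result boils down to under amalgamation. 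Two small remarks. First, joint embedding does no work in your proof and can be dropped: Fact \ref{gtu-existence} only needs amalgamation, no maximal models, and stability. Second, you are right to flag amalgamation as the one genuine hypothesis missing from the literal wording of the Fact; as stated the Fact has no amalgamation assumption, so a completely self-contained proof of the statement exactly as worded would still need to invoke Boney's theorem (or quietly add the amalgamation hypothesis), but in every place the paper actually uses the Fact -- namely inside Corollary \ref{lambdap-sym}, under Hypothesis \ref{good-frame-hyp} -- amalgamation is present, so your proof covers the use cases.
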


\subsection{Universal and limit extensions}

\begin{mydef}[Universal and limit extensions]
  For $M, N \in K$, we say that $N$ is \emph{universal over $M$} (written $M \ltu N$) if and only if $M < N$ and for any $M' \in K_{\|M\|}$ with $M' \gea M$, $M'$ can be embedded inside $N$ over $M$. We also write $N \gtu M$ for $M \ltu N$.

  For $\mu \ge \text{LS} (K)$ and $0 < \delta < \mu^+$ an ordinal, we say that $N$ is \emph{($\mu$,$\delta$)-limit} over $M$ (written $M \ltl{\mu}{\delta} N$) if and only if $M, N \in K_\mu$, $M \lea N$, and there is a $\ltu$-increasing chain $(M_i)_{i \le \delta}$ with $M_0 = M$, $M_\delta = N$ and $M_\delta = \bigcup_{i < \delta} M_i$ if $\delta$ is limit. We also write $N \gtl{\mu}{\delta} M$ for $M \ltl{\mu}{\delta} N$.

  We say that a model $N$ is \emph{limit} if it is $(\|N\|, \gamma)$-limit over $M$ for some $M \lea N$ and some limit ordinal $\gamma < \mu^+$.
\end{mydef}
\begin{mydef}
  A model $N \in K$ is \emph{$\mu$-model-homogeneous} if for any $M \lea N$ with $\|M\| < \mu$, we have $M \ltu N$. $N$ is \emph{model-homogeneous} if it is $\|N\|$-model-homogeneous.
\end{mydef}
\begin{fact}\label{gtu-existence}
  Let $\mu \ge \text{LS} (K)$. Assume $K_\mu$ has amalgamation, no maximal models, and is stable. For any $M \in K_\mu$, there exists $N \in K_\mu$ such that $M \ltu N$. Therefore there is a model-homogeneous $N \in K_{\mu^+}$ with $M \lta N$.
\end{fact}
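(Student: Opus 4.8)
Fact \ref{gtu-existence} is a standard consequence of stability, going back essentially to \cite{sh394}; here is the plan I would follow. It splits into two parts: first produce a universal extension of size $\mu$, then iterate it $\mu^+$ times. (Note that, replacing $K$ by $K_{\ge \mu}$ if necessary, I may assume $\text{LS} (K) = \mu$, so that all the amalgamations and L\"owenheim--Skolem reductions below take place in $K_\mu$.)

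\emph{First part: for $M \in K_\mu$ there is $N \in K_\mu$ with $M \ltu N$.} By stability in $\mu$, $|S (M')| \le \mu$ for every $M' \in K_\mu$. Using amalgamation and no maximal models (and L\"owenheim--Skolem to control sizes), I would build a $\lea$-increasing continuous chain $\seq{N_\xi : \xi < \mu}$ in $K_\mu$ with $N_0 = M$ such that, for each $\xi$, $N_{\xi + 1}$ realizes every $p \in S (N_\xi)$: this is possible since there are only $\le \mu$ such types, and amalgamating $\mu$ many size-$\mu$ realizations over $N_\xi$ in a chain still yields a model of size $\mu$. Set $N := \bigcup_{\xi < \mu} N_\xi \in K_\mu$. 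To see $M \ltu N$, given $M' \gea M$ in $K_\mu$ I would construct a $\lea$-embedding $M' \to N$ fixing $M$ by recursion along an exhaustion of $M'$: since $\text{LS} (K) = \mu$, each successive piece of $M'$ can be filtered into submodels of size $< \mu$ whose images are kept inside an initial segment $N_\xi$ of the chain, and the relevant type (extended to $N_\xi$ by amalgamation) is realized there. The bookkeeping needed to keep these images bounded in the chain --- and the choice of the chain's length when $\mu$ is singular (take it of length $\mu \cdot \mu$, say, so that a length-$\mu$ subrecursion is available at each of the $\mu$ main steps) --- is the delicate point; this is classical and the details are in \cite{sh394} or \cite{gvv-v3}.

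\emph{Second part: the ``therefore''.} Using the first part, I would build a $\lea$-increasing continuous chain $\seq{M_i : i < \mu^+}$ with $M_0 = M$, each $M_i \in K_\mu$ (unions at limits stay in $K_\mu$ since $|i| \cdot \mu = \mu$), and $M_{i + 1} \gtu M_i$. Let $\bigN := \bigcup_{i < \mu^+} M_i$; then $\bigN \in K_{\mu^+}$ and $M = M_0 \lta \bigN$. To check that $\bigN$ is model-homogeneous, let $M_0^\ast \lea \bigN$ with $\|M_0^\ast\| \le \mu$ and let $M_1^\ast \gea M_0^\ast$ with $\|M_1^\ast\| = \|M_0^\ast\|$; I must embed $M_1^\ast$ into $\bigN$ over $M_0^\ast$. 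Since $\text{cf} (\mu^+) = \mu^+ > \|M_0^\ast\|$, continuity of the chain and the coherence axiom give $i < \mu^+$ with $M_0^\ast \lea M_i$. Amalgamate $M_1^\ast$ and $M_i$ over $M_0^\ast$: this yields $N^\ast$ with $M_i \lea N^\ast$ (so $\|N^\ast\| = \mu$, as it contains $M_i$ and L\"owenheim--Skolem bounds it from above) and an embedding $g : M_1^\ast \to N^\ast$ over $M_0^\ast$. Since $M_i \ltu M_{i + 1}$, there is $h : N^\ast \to M_{i + 1}$ over $M_i$; then $h \circ g : M_1^\ast \to M_{i + 1} \lea \bigN$ is over $M_0^\ast$, because $g$ fixes $M_0^\ast$ and $h$ fixes $M_i \gea M_0^\ast$. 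Hence $M_0^\ast \ltu \bigN$, as needed.

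The genuine content is the first part: the main obstacle is verifying that the union of the type-realizing chain is universal over its base, which comes down to managing the recursion so that the pieces of $M'$ are mapped into bounded initial segments of the chain, plus the bookkeeping for singular $\mu$. Once that is in hand, the second part is a routine iteration.
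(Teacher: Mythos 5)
Your overall plan --- defer the production of a universal extension of size $\mu$ to the literature, then iterate $\mu^+$ times and verify model-homogeneity --- is exactly what the paper does (it simply cites \cite[Claim II.1.16.1(a)]{shelahaecbook} for the first part and writes one sentence for the second). Your detailed check of the second part is fine, and your observation that passing to $K_{\ge\mu}$ lets one avoid worrying about submodels of size $<\mu$ in the definition of $\mu^+$-model-homogeneity is a reasonable hygiene step.

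There is, however, a concrete error in your sketch of the first part. You write ``since $\text{LS}(K)=\mu$, each successive piece of $M'$ can be filtered into submodels of size $<\mu$.'' This is backwards: after your reduction to $\text{LS}(K)=\mu$ there \emph{are} no models of size $<\mu$ in $K$, so there is nothing to filter into. The filtration idea requires $\text{LS}(K)<\mu$, and your preliminary normalization actively destroys the hypothesis you then invoke. Likewise, using a chain of ordinal length $\mu\cdot\mu$ does not help when $\mu$ is singular: $\mu\cdot\mu$ has the same cofinality as $\mu$, so the ``keep images bounded below the top'' problem at stages of cofinality $\text{cf}(\mu)$ is unchanged. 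The genuine argument (Shelah's, or \cite[Theorem 2.2.2]{gvv-v3} in roughly this form) works with an enumeration of $M'$ and an increasing chain of embeddings of size-$\mu$ resolutions of $M'$ into the $N_\xi$, using that each $N_{\xi+1}$ realizes every Galois type over $N_\xi$; the cofinality issue is handled differently and does not hinge on $\text{LS}(K)<\mu$. Since both you and the paper ultimately defer the first part to a reference, this does not change the verdict that your approach is the same as the paper's, but the sketch as written misdescribes why the cited argument works.
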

\begin{proof}
  The first part is by \cite[Claim II.1.16.1(a)]{shelahaecbook}. The second part follows from iterating the first part $\mu^+$ many times.
\end{proof}
\begin{remark}
  By \cite[Lemma 0.26]{sh576}, for $\mu > \text{LS} (K)$, $N$ is $\mu$-model-homogeneous if and only if it is $\mu$-saturated.
\end{remark}

The next proposition is folklore and the results appear in several places in the literature (see for example \cite[Lemma 2.2]{sh394}). For the convenience of the reader, we have included the proofs.

\begin{prop}\label{ltl-basic-props}
 Let $M_0, M_1, M_2 \in K_\mu$, $\mu \ge \text{LS} (K)$ and $0 < \delta < \mu^+$. Then:
  \begin{enumerate}
  \item $M_0 \ltl{\mu}{\delta} M_1$ implies $M_0 \ltu M_1$.
  \item \label{univ-trans} $M_0 \ltu M_1 \lea M_2$ implies $M_0 \ltu M_2$.
  \item Assume $K_\mu$ has amalgamation. Then  $M_0 \lea M_1 \ltl{\mu}{\delta} M_2$ implies $M_0 \ltl{\mu}{\delta} M_2$.
  \item Assume $K_\mu$ has amalgamation, no maximal models, and is stable. Then there exists $M_0'$ such that $M_0 \ltl{\mu}{\delta} M_0'$.
  \item Conversely, if for every $M_0 \in K_\mu$ there exists $M_0' \in K_\mu$ such that $M_0 \ltu M_0'$, then $K_\mu$ has amalgamation, no maximal models, and is stable.
  \end{enumerate}
\end{prop}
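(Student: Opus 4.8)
The plan is to take the five items in order, since (1) and (2) are just unwindings of the definitions, (3) needs one genuine amalgamation argument, (4) is a transfinite iteration of Fact~\ref{gtu-existence}, and (5) packages three short observations. For (2): given $M' \in K_{\|M_0\|}$ with $M' \gea M_0$, use $M_0 \ltu M_1$ to embed $M'$ into $M_1$ over $M_0$ and compose with the inclusion $M_1 \lea M_2$; since $M_0 \lta M_1 \lea M_2$ we have $M_0 \neq M_2$, so $M_0 \ltu M_2$. For (1): let $(N_i)_{i \le \delta}$ be a $\ltu$-increasing chain witnessing $M_0 \ltl{\mu}{\delta} M_1$, so $N_0 = M_0$ and $N_\delta = M_1$; as $\delta \ge 1$ we get $M_0 = N_0 \ltu N_1 \lea M_1$, and (1) follows from (2).

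For (3), fix a $\ltu$-increasing chain $(N_i)_{i \le \delta}$ witnessing $M_1 \ltl{\mu}{\delta} M_2$, so $N_0 = M_1$ and $N_\delta = M_2$. The key claim is that $M_0 \ltu N_1$. To prove it, take any $M' \in K_\mu$ with $M' \gea M_0$, use amalgamation in $K_\mu$ to amalgamate $M'$ and $M_1$ over $M_0$ inside some $M'' \in K_\mu$ via $f \colon M' \to M''$ and $g \colon M_1 \to M''$, both over $M_0$, then absorb $g$ into an isomorphism: there is $M''^{\ast} \in K_\mu$ with $M_1 \lea M''^{\ast}$ and an isomorphism $\phi \colon M''^{\ast} \to M''$ with $\phi \rest M_1 = g$. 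Applying $M_1 \ltu N_1$ to the $\mu$-sized extension $M''^{\ast}$ of $M_1$ yields $k \colon M''^{\ast} \to N_1$ over $M_1$, and then $h := k \circ \phi^{-1} \colon M'' \to N_1$ satisfies $h \circ g = \id_{M_1}$, so $h \circ f \colon M' \to N_1$ is over $M_0$; since $M_0 \neq N_1$ (because $M_1 \neq N_1$), this gives $M_0 \ltu N_1$. Now replace the bottom model of the chain: set $N_0' := M_0$ and $N_i' := N_i$ for $1 \le i \le \delta$. This chain is still $\ltu$-increasing (the only new link, $N_0' \ltu N_1'$, is the claim), and still continuous at limits, because $M_0 \lea M_1 = N_0 \lea N_1$ shows that swapping $N_0$ for $M_0$ changes no union; hence $(N_i')_{i \le \delta}$ witnesses $M_0 \ltl{\mu}{\delta} M_2$.

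For (4): by Fact~\ref{gtu-existence} every model of $K_\mu$ has a $\ltu$-extension in $K_\mu$, so build a $\ltu$-increasing continuous chain $(M_i)_{i \le \delta}$ starting at the given $M_0$, applying Fact~\ref{gtu-existence} at successor steps and taking unions at limits (these stay in $K_\mu$ since $|\delta| < \mu^+$ forces the union, an AEC-member containing $M_0$, to have size exactly $\mu$); then $M_\delta$ is as required. For (5): no maximal models is immediate from $M_0 \lta M_0'$; for amalgamation of $M_0 \lea M_\ell$ in $K_\mu$, pick $M_0' \in K_\mu$ with $M_0 \ltu M_0'$, and note each $M_\ell$ is a $\mu$-sized extension of $M_0$, hence embeds into $M_0'$ over $M_0$, giving an amalgam; for stability in $\mu$, fix $M \in K_\mu$ and $M \ltu M' \in K_\mu$, and observe that every $p = \text{tp}(a/M; N) \in S(M)$ equals $\text{tp}(a/M; N')$ for some $N' \lea N$ with $M \lea N'$, $a \in N'$, $\|N'\| = \mu$ (Löwenheim--Skolem), so $N'$ embeds into $M'$ over $M$ and $p$ is realized in $M'$; since distinct types over $M$ have distinct realizations, $|S(M)| \le \|M'\| = \mu$.

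I expect step (3) to be the main obstacle: it is the only place that requires a real amalgamation-and-renaming argument rather than bookkeeping, and the continuity check after swapping the bottom model of the chain needs a little care. Among the remaining items, the stability half of (5) is the one slightly non-formal point, but the ``realize every type in the universal extension'' trick makes it routine.
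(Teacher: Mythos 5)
Your proof is correct and follows the same plan as the paper's, item by item: (1) via (2), the amalgamate-then-use-universality-of-$N_1$-over-$M_1$ argument for (3), iterating Fact~\ref{gtu-existence} for (4), and the realize-types-in-the-universal-extension trick for (5). The only differences are that you spell out the renaming step and the continuity check in (3) and the downward L\"owenheim--Skolem step in (5), which the paper leaves implicit.
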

\begin{proof} \
  \begin{enumerate}
    \item Fix $(N_i)_{i \le \delta}$ witnessing that $M_0 \ltl{\mu}{\delta} M_1$. Let $M_0' \gea M_0$ have size $\mu$. Since $\delta > 0$, $N_1$ is well defined, and is universal over $N_0 = M_0$, hence $M_0'$ can be embedded inside $N_1$ over $M_0$, and hence since $N_1 \lea M_1$ can be embedded inside $M_1$ over $M_0$.
    \item Let $M_0' \gea M_0$ have size $\mu$. Since $M_0'$ embed inside $M_1$ over $M_0$, it also embeds inside $M_2$ over $M_0$.
    \item Let $(N_i)_{i \le \delta}$ witness $M_0 \ltl{\mu}{\delta} M_1$. We show that $M_0 \ltu N_1$. This is enough since then $M_0 \frown (N_i)_{0 < i \le \delta}$ will witness that $M_0 \ltl{\mu}{\delta} M_2$. Let $M_0' \gea M_0$ have size $\mu$. By amalgamation, find $M_1' \gea M_1$ and $h: M_0' \xrightarrow[M_0]{} M_1'$. Now use universality of $M_2$ over $M_1$ to find $g: M_1' \xrightarrow[M_1]{} M_2$. Let $f := g \circ h$. Then $f: M_0' \xrightarrow[M_0]{} M_2$, as desired. 
    \item Iterate Fact \ref{gtu-existence} $\delta$ many times.
    \item Let $M_0 \in K_\mu$ and let $M_0' \gtu M_0$ be in $K_\mu$. $M_0'$ witnesses that $M_0$ is not maximal in $K_\mu$. Moreover, $M_0$ is an amalgamation base, since any two models of size $\mu$ extending $M_0$ can amalgamated over $M_0$ inside $M_0'$. Finally, all types over $M_0$ are realized in $M_0'$ which has size $\mu$, there can be at most $\mu$ many of them, so stability follows.
  \end{enumerate}
\end{proof}

We give orderings satisfying the conclusion of Proposition \ref{ltl-basic-props} a name:

%% Previously, ``abstract universal'' was ``good''.
\begin{mydef}[Abstract universal ordering]
  An \emph{abstract universal ordering on $K_\mu$} is a binary relation $\ltg$ on $K_\mu$ satisfying the following properties. For any $M_0, M_1, M_2 \in K_\mu$:

  \begin{enumerate}
  \item $M_0 \ltg M_1$ implies $M_0 \ltu M_1$.
  \item There exists $N_0 \in K_\mu$ such that $M_0 \ltg N_0$.
  \item $M_0 \lea M_1 \ltg M_2$ implies $M_0 \ltg M_2$.
  \item Closure under isomorphism: if $M_0 \ltg M_1$ and $f: M_1 \cong M_1'$, then $f[M_0] \ltg M_1'$.
  \end{enumerate}

  Note that this implies that $\ltg$ is a strict partial ordering on $K_\mu$ extending $\lta$.

  For $0 < \delta < \mu^+$, a model $M \in K_\mu$ is \emph{$(\delta, \ltg)$-limit} if there exists a $\ltg$-increasing chain $(M_i)_{i < \delta}$ in $K_\mu$ such that $M = \bigcup_{i < \delta} M_i$. $M$ is \emph{$\ltg$-limit} if there exists a limit $\delta$ such that $M$ is $(\delta, \ltg)$-limit.
\end{mydef}

\begin{remark}\label{rmk-good-ordering-stable}
Assume $K_\mu$ has amalgamation, no maximal models, and is stable. Then by Proposition \ref{ltl-basic-props}, for any $0 < \delta < \mu^+$, $\ltl{\mu}{\delta}$ is an abstract universal ordering on $K_\mu$. Moreover, the existence of \emph{any} abstract universal ordering on $K_\mu$ implies that $\ltu$ is an abstract universal ordering, and hence that $K_\mu$ has amalgamation, no maximal models, and is stable.
\end{remark}

Let $\text{LS} (K) \le \mu < \lambda$. Even assuming stability everywhere, is is unclear whether there should be any model-homogeneous model in $\lambda$ (think for example of the case $\text{cf} (\lambda) = \omega$). The following tells us that we can at least get an approximation to one: we can do the usual construction of special models in a cardinal $\lambda$ if $K$ is stable below $\lambda$. This will be used in the proof of the superstability theorem (Theorem \ref{lambdap-stability}).

\begin{lem}\label{univ-embed}
  Let $\text{LS} (K) \le \mu^+ < \lambda$. Assume $K_{[\mu, \lambda)}$ has amalgamation, no maximal models, and is stable in $\mu'$ for unboundedly many $\mu < \mu' < \lambda$ (that is, for any $\mu < \mu' < \lambda$, there exists $\mu' \le \mu'' < \lambda$ such that $K_{\mu''}$ is stable).
    
    For any $N_0 \in K_{[\mu, \lambda)}$, there exists $(N_i)_{i < \lambda}$ $\ltu$-increasing continuous in $K_{[\mu, \lambda)}$ with each $N_{i + 1}$ $\mu^+$-model-homogeneous. Moreover any $M \in K_{[\mu, \lambda]}$ such that $N_0 \lea M$ can be embedded inside $N := \bigcup_{i < \lambda} N_i$ over $N_0$.
\end{lem}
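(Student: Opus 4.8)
The plan is to build the chain $(N_i)_{i<\lambda}$ by induction on $i$, using $(\mu',\mu^+)$-limit models to obtain the required homogeneity at successor stages, and then to verify the embedding statement by a resolution/back-and-forth argument.

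\emph{The chain.} Take $N_0$ as given. At a limit $i$, put $N_i := \bigcup_{j<i} N_j$; choosing the cardinals $\mu'$ used at the successor steps below $i$ appropriately (see the last paragraph) we have $\|N_i\| < \lambda$, so $N_i \in K_{[\mu,\lambda)}$ and the chain is $\ltu$-increasing and continuous through $i$. At a successor step, given $N_i \in K_{[\mu,\lambda)}$, fix a cardinal $\mu'$ with $\max(\mu^+,\|N_i\|) \le \mu' < \lambda$ and $K_{\mu'}$ stable (available by the stability hypothesis), and use no maximal models together with Löwenheim--Skolem to extend $N_i$ to some $N_i^\ast \in K_{\mu'}$. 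Since $K_{\mu'}$ inherits amalgamation, no maximal models and stability from $K_{[\mu,\lambda)}$ (pass to Löwenheim--Skolem hulls inside the ambient amalgams and extensions), Proposition \ref{ltl-basic-props}(4) applied in $K_{\mu'}$ yields a $(\mu',\mu^+)$-limit model $N_{i+1}$ over $N_i^\ast$, and then $N_{i+1} \in K_{\mu'} \subseteq K_{[\mu,\lambda)}$. By Proposition \ref{ltl-basic-props}(1) we have $N_i^\ast \ltu N_{i+1}$; since $N_i \lea N_i^\ast$ with $\|N_i^\ast\| \ge \|N_i\|$, amalgamating an arbitrary $\|N_i\|$-sized extension of $N_i$ with $N_i^\ast$ over $N_i$ and then applying universality of $N_{i+1}$ over $N_i^\ast$ (exactly as in the proof of Proposition \ref{ltl-basic-props}(3)) upgrades this to $N_i \ltu N_{i+1}$. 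Finally, $N_{i+1}$ is $\mu^+$-model-homogeneous: let $(M_\alpha)_{\alpha\le\mu^+}$ witness that $N_{i+1}$ is $(\mu',\mu^+)$-limit over $N_i^\ast$, and let $P \lea N_{i+1}$ with $\|P\| \le \mu$; since $\text{cf}(\mu^+) = \mu^+ > \mu$ and $N_{i+1} = \bigcup_{\alpha<\mu^+} M_\alpha$, coherence gives $P \lea M_\beta$ for some $\beta < \mu^+$, and as $M_{\beta+1} \gtu M_\beta \gea P$ a short amalgamation argument yields $P \ltu M_{\beta+1}$, hence $P \ltu N_{i+1}$ by Proposition \ref{ltl-basic-props}(2). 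As the chain is strictly increasing of length $\lambda$, $\|N\| = \lambda$.

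\emph{The embedding statement.} Let $M \in K_{[\mu,\lambda]}$ with $N_0 \lea M$. Write $M = \bigcup_{\xi<\theta} M_\xi$ as an increasing continuous chain with $M_0 = N_0$, each $\|M_\xi\| < \lambda$, and $\theta$ a limit ordinal (when $\|M\| < \lambda$ take $\theta = \|M\|$ with each $M_\xi$ a Löwenheim--Skolem hull; when $\|M\| = \lambda$ resolve along a cofinal sequence, keeping partial suprema of the $\|M_\xi\|$ below $\lambda$). Build by induction on $\xi$ an increasing continuous chain of embeddings $f_\xi : M_\xi \to N$ fixing $N_0$, together with ordinals $i_\xi < \lambda$, so that $f_\xi[M_\xi] \lea N_{i_\xi}$; set $f_0 = \id_{N_0}$, $i_0 = 0$, and take unions at limits. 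At a successor $\xi+1$, using that the $\|N_i\|$ are cofinal in $\lambda$ and $\|M_{\xi+1}\| < \lambda$, pick $i' \ge i_\xi$ with $\|N_{i'}\| \ge \|M_{\xi+1}\|$; amalgamating $M_{\xi+1}$ with $N_{i'}$ over $M_\xi$ (identified with $f_\xi[M_\xi] \lea N_{i'}$) gives $Q \gea N_{i'}$ of size $\|N_{i'}\|$ together with an embedding $M_{\xi+1} \to Q$ extending $f_\xi$; since $N_{i'} \ltu N_{i'+1}$, $Q$ embeds into $N_{i'+1}$ over $N_{i'}$, and composing produces $f_{\xi+1}$ with $f_{\xi+1}[M_{\xi+1}] \lea N_{i'+1} =: N_{i_{\xi+1}}$. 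Then $f := \bigcup_{\xi<\theta} f_\xi : M \to N$ is the required embedding over $N_0$.

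\emph{Main point.} The observation that a $(\mu',\mu^+)$-limit model is automatically $\mu^+$-model-homogeneous makes the successor step of the construction painless, sidestepping the cardinal-arithmetic bookkeeping of the usual iterated-saturation construction; the embedding statement is then a routine back-and-forth. The one place demanding care is the choice of the sizes $\|N_i\|$ along the chain: one must keep $\|N_i\| < \lambda$ at every limit stage while letting the $\|N_i\|$ grow cofinally in $\lambda$ (so that every $M$ of size at most $\lambda$ is caught), which is arrangeable --- for instance by spreading the increases in size along a cofinal sequence in $\lambda$ --- precisely because, for every cardinal $\chi < \lambda$, the stability hypothesis supplies a cardinal $\mu' \in [\chi,\lambda)$ with $K_{\mu'}$ stable.
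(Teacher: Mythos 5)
Your proof is correct and follows essentially the same route as the paper's: build the chain by taking, at each successor step, a $(\mu',\mu^+)$-limit extension (the paper phrases this as ``iterate Fact \ref{gtu-existence} $\mu^+$ many times'') and use that such a limit model is $\mu^+$-model-homogeneous; then embed $M$ by resolving it and using universality of $N_{i+1}$ over $N_i$ step by step. You are somewhat more explicit than the paper in two places: you actually prove that a $(\mu',\mu^+)$-limit model is $\mu^+$-model-homogeneous (the paper asserts it), and you flag the bookkeeping needed to make the sizes $\|N_i\|$ grow cofinally in $\lambda$ while staying below $\lambda$ at every limit index --- a point the paper's proof glosses over but which is genuinely needed for the embedding step to go through when $\|M\|=\lambda$, since one must have $\|M_{\xi+1}\|\le\|N_{i'}\|$ before amalgamating. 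Both are routine, but your version is the more careful write-up.
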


\begin{proof}
  We build $(N_i)_{i < \lambda}$ by induction. $N_0$ is already given and without loss of generality $\|N_0\| \ge \mu^+$. Take unions at limits and for a given $N_i$, first take $N_i' \ge N_i$ such that $K_{\|N_i'\|}$ is stable, and iterate Fact \ref{gtu-existence} $\mu^+$-many times to pick $N_{i + 1} \in K_{\|N_i'\|}$ which is also $\mu^+$-model-homogeneous such that $N_i' \ltu N_{i + 1}$ (and so by Proposition \ref{ltl-basic-props} also $N_i \ltu N_{i + 1}$).

  Now given $M \in K_{[\mu, \lambda]}$ with $N_0 \lea M$, let $(M_i)_{i \le \lambda}$ be an increasing continuous resolution of $M$ such that $\|M_i\| < \lambda$ for all $i < \lambda$ and $M_0 = N_0$. Inductively build $(f_i)_{i \le \lambda}$ an increasing continuous chain of $K$-embeddings such that for each $i \le \lambda$, $f_i : M_i \xrightarrow[M_0]{} N_i$. This is easy since $N_{i + 1} \gtu N_i$ for all $i < \lambda$. Then $f_\lambda$ embeds $M$ into $N$.
\end{proof}

\subsection{Good frames}\label{good-frames-subsec}

Good frames were first defined in \cite[Chapter II]{shelahaecbook}. The idea is to provide a localized (i.e.\ only for base models of a given size $\lambda$) axiomatization of a forking-like notion for (a ``nice enough'' set of) 1-types. Jarden and Shelah (in \cite{jrsh875}) later gave a slightly more general definition, not assuming the existence of a superlimit model and dropping some of the redundant clauses. We will use a slight variation here: we assume the models come from $K_{\mathcal{F}}$, for $\mathcal{F}$ an interval, instead of just $K_\lambda$. We first adapt the definition of a pre-$\lambda$-frame from \cite[Definition III.0.2.1]{shelahaecbook} to such an interval:

\begin{mydef}[Pre-frame]
  Let $\mathcal{F}$ be an interval of the form $[\lambda, \theta)$, where $\lambda$ is a cardinal, and $\theta > \lambda$ is either a cardinal or $\infty$.

  A \emph{pre-$\mathcal{F}$-frame} is a triple $\mathfrak{s} = (K, \nf, S^{\text{bs}})$, where:

  \begin{enumerate}
  \item $K$ is an abstract elementary class\footnote{In \cite[Definition III.0.2.1]{shelahaecbook}, Shelah only asks that $K$ contains the models of size $\mathcal{F}$ of an AEC. For easy of exposition, we do not adopt this approach.} with $\lambda \ge \text{LS} (K)$, $K_\lambda \neq \emptyset$.
  \item $S^{\text{bs}} \subseteq \bigcup_{M \in K_{\mathcal{F}}} S^{\text{na}} (M)$. For $M \in K_{\mathcal{F}}$, we write $S^{\text{bs}} (M)$ for $S^{\text{bs}} \cap S^{\text{na}} (M)$.
  \item $\nf$ is a relation on quadruples of the form $(M_0, M_1, a, N)$, where $M_0 \lea M_1 \lea N$, $a \in N$, and $M_0$, $M_1$, $N$ are all in $K_{\mathcal{F}}$. We write $\nf(M_0, M_1, a, N)$ or $\nfs{M_0}{a}{M_1}{N}$ instead of $(M_0, M_1, a, N) \in \nf$. 
  \item The following properties hold:
    \begin{enumerate}
      \item Invariance: If $f: N \cong N'$ and $\nfs{M_0}{a}{M_1}{N}$, then $\nfs{f[M_0]}{f(a)}{f[M_1]}{N'}$. If $\text{tp} (a / M_1; N) \in S^{\text{bs}} (M_1)$, then $\text{tp}(f (a) / f[M_1]; N') \in S^{\text{bs}} (f[M_1])$.
      \item Monotonicity: If $\nfs{M_0}{a}{M_1}{N}$, $M_0 \lea M_0' \lea M_1' \lea M_1 \lea N' \lea N \lea N''$ with $a \in N'$ and $N'' \in K_{\mathcal{F}}$, then $\nfs{M_0'}{a}{M_1'}{N'}$ and $\nfs{M_0'}{a}{M_1'}{N''}$.
      \item Nonforking types are basic: If $\nfs{M}{a}{M}{N}$, then $\text{tp} (a / M; N) \in S^{\text{bs}} (M)$.
    \end{enumerate}
  \end{enumerate}

  We write $\lambda$-frame instead of $\{\lambda\}$-frame, $(\ge \lambda)$-frame instead of $[\lambda, \infty)$-frame. We sometimes drop the $\mathcal{F}$ when it is clear from context.

    A pre-frame is \emph{type-full} if $S^{\text{bs}} (M) = S^{\text{na}} (M)$ for all $M \in K_{\mathcal{F}}$.

    For $\mathcal{F}' \subseteq \mathcal{F}$ an interval, we let $\mathfrak{s} \upharpoonright \mathcal{F}'$ denote the pre-$\mathcal{F}'$-frame defined in the obvious way by restricting the basic types and $\nf$ to models in $K_{\mathcal{F}'}$. For $\lambda' \in \mathcal{F}$, we write $\mathfrak{s} \upharpoonright \lambda'$ instead of $\mathfrak{s} \upharpoonright \{\lambda'\}$.
\end{mydef}

By the invariance and monotonicity properties, $\nf$ is really a relation on types. This justifies the next definition.

\begin{mydef}\label{s-forking}
  If $\mathfrak{s} = (K, \nf, S^{\text{bs}})$ is a pre-$\mathcal{F}$-frame, $p \in S (M_1)$ is a type, we say $p$ \emph{does not $\s$-fork over $M_0$} if $\nfs{M_0}{a}{M_1}{N}$ for some (equivalently any) $a$ and $N$ such that $p = \text{tp} (a / M_1; N)$.
\end{mydef}
\begin{remark}
  A pre-frame defines an \emph{abstract} notion of forking. That is, we only know that the relation $\nf$ satisfies some axioms but it could a-priori be defined arbitrarily. Later in the paper, we will study a specific definition of forking (based on splitting). While the specific definition we will give will coincide (over sufficiently saturated models) with first-order forking when the AEC is a class of models of a first-order theory, the reader should remember that we are working in much more generality than the first-order framework, hence most of the properties of first-order forking need not hold here.
\end{remark}
\begin{remark}
  We could have started from $(K, \nf)$ and defined the basic types as those that do not fork over their own domain. The existence property of good frames (see below) would then hold for free. Since we are sometimes interested in studying frames that only satisfy existence over a certain class of models (like the saturated models), we will not adopt this approach.
\end{remark}
\begin{remark}[Monotonicity of $\s$-forking]\label{monot-s-fork}
  If $\mathfrak{s} = (K, \nf, S^{\text{bs}})$ is a pre-$\mathcal{F}$-frame, $M_0 \lea M_1 \lea N_1 \lea N_0$ are in $K_{\mathcal{F}}$, and $p \in S^{\text{bs}} (N_0)$ does not $\mathfrak{s}$-fork over $M_0$, then by the monotonicity axiom, $p \rest N_1$ does not $\mathfrak{s}$-fork over $M_1$. We will use this fact freely.
\end{remark}

%% The following example gives the simplest possible frame:

%% \begin{example}
%%   Let $I$ be a linearly ordered set. Let $K$ be an AEC with amalgamation and let $\mathcal{F}$ be an interval of cardinals as above. Then $(K, \nf, S^{\text{bs}})$ is a $(<I, \mathcal{F})$-frame, where:

%%   \begin{itemize}
%%   \item For $M \in K_{\mathcal{F}}$, $S^{\text{bs}} (M) := S^{<I} (M)$ ($S^{\text{bs}} (M) := \emptyset$ would also work).
%%   \item For $M_0 \prec M_1 \prec N$ in $K_{\mathcal{F}}$ and $\bar{a} \in \fct{<I}{N}$, say $\nfs{M_0}{\bar{a}}{M_1}{N}$ if and only if $\text{tp} (\bar{a} / M_1; N) \in S^{\text{bs}} (M_1)$.
%%   \end{itemize}
%% \end{example}

\begin{mydef}[Good frame]\label{good-frame-def}
  Let $\mathcal{F}$ be as above.

  A \emph{good $\mathcal{F}$-frame} is a pre-$\mathcal{F}$-frame $(K, \nf, S^{\text{bs}})$ satisfying in addition:

  \begin{enumerate}
  \item $K_{\mathcal{F}}$ has amalgamation, joint embedding, and no maximal model.
  \item \label{stab-cond} bs-Stability: $|S^{\text{bs}} (M)| \le \|M\|$ for all $M \in K_{\mathcal{F}}$.
  \item Density of basic types: If $M \lta N$ and $M, N \in K_{\mathcal{F}}$, then there is $a \in N$ such that $\text{tp} (a / M; N) \in S^{\text{bs}} (M)$.
  \item Existence: If $M \in K_{\mathcal{F}}$ and $p \in S^{\text{bs}} (M)$, then $p$ does not $\s$-fork over $M$.
  \item Extension: If $p \in S (N)$ does not $\s$-fork over $M$, and $N' \in K_{\mathcal{F}}$ is such that $N' \gea N$, then there is $q \in S (N')$ extending $p$ that does not $\s$-fork over $M$.
  \item Uniqueness: If $p, q \in S (N)$ do not $\s$-fork over $M$ and $p \upharpoonright M = q \upharpoonright M$, then $p = q$.
  \item Symmetry: If $\nfs{M_0}{a_1}{M_2}{N}$, $a_2 \in M_2$, and $\text{tp} (a_2 / M_0; N) \in S^{\text{bs}} (M_0)$, then there is $M_1$ containing $a_1$ and there is $N' \gea N$ such that $\nfs{M_0}{a_2}{M_1}{N'}$.
  \item Local character: If $\delta$ is a limit ordinal, $(M_i)_{i \le \delta}$ is an increasing chain in $K_{\mathcal{F}}$ with $M_\delta = \bigcup_{i < \delta} M_i$, and $p \in S^{\text{bs}} (M_\delta)$, then there exists $i < \delta$ such that $p$ does not $\s$-fork over $M_i$.
  \item Continuity: If $\delta$ is a limit ordinal, $(M_i)_{i \le \delta}$ is an increasing chain in $K_{\mathcal{F}}$ with $M_\delta = \bigcup_{i < \delta} M_i$, $p \in S (M_\delta)$ is so that $p \upharpoonright M_i$ does not $\s$-fork over $M_0$ for all $i < \delta$, then $p$ does not $\s$-fork over $M_0$.
  \item Transitivity\footnote{This actually follows from uniqueness and extension, see \cite[Claim II.2.18]{shelahaecbook}.}: If $M_0 \lea M_1 \lea M_2$, $p \in S (M_2)$ does not $\s$-fork over $M_1$ and $p \upharpoonright M_1$ does not $\s$-fork over $M_0$, then $p$ does not $\s$-fork over $M_0$.
  \end{enumerate}

  For $\mathbb{L}$ a list of properties, a $\goodms{\mathbb{L}}$ $\mathcal{F}$-frame is a pre-$\mathcal{F}$-frame that satisfies all the properties of good frames except possibly the ones in $\mathbb{L}$. In this paper, $\mathbb{L}$ will only contain symmetry and/or bs-stability. We abbreviate symmetry by $S$, bs-stability by $St$, and write $\goodm$ for $\goodms{(S, St)}$.

  We say that $K$ has a good $\mathcal{F}$-frame if there is a good $\mathcal{F}$-frame where $K$ is the underlying AEC (and similarly for $\goodm$).
\end{mydef}

\begin{remark}\label{frame-up}
  Using $\mathcal{F}$ instead of a single cardinal $\lambda$ is only a convenience: just like an abstract elementary class $K$ is determined by $K_{\text{LS} (K)}$, a $\goodm$ $\mathcal{F}$-frame $\mathfrak{s}$ is determined by $\mathfrak{s} \upharpoonright \lambda$, where $\lambda := \min (\mathcal{F})$. More precisely, if $\mathfrak{t}$ is a $\goodm$ $\mathcal{F}$-frame such that $\mathfrak{t} \upharpoonright \lambda = \mathfrak{s} \upharpoonright \lambda$, then the arguments from \cite[Section II.2]{shelahaecbook} show that $\mathfrak{t} = \mathfrak{s}$.
\end{remark}

Note that local character implies nonforking is always witnessed by a model of small size:

\begin{prop}\label{kappabar}
  Assume $\mathcal{F}$ is an interval of cardinals with minimum $\lambda$. Assume $\mathfrak{s} = (K, \nf, S^{\text{bs}})$ is a pre-$\mathcal{F}$-frame satisfying local character and transitivity. If $M \in K_{\mathcal{F}}$ and $p \in S^{\text{bs}} (M)$, then there exists $M' \in K_\lambda$ such that $p$ does not $\s$-fork over $M'$.
\end{prop}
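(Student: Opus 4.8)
The plan is to prove the statement by induction on $\|M\|$. If $\|M\| = \lambda$ there is nothing to do (take $M' = M$), so assume $\mu := \|M\| > \lambda$ and that the proposition holds for every member of $K_{\mathcal{F}}$ of size less than $\mu$. Since $\mu > \lambda \ge \text{LS} (K)$, the Löwenheim–Skolem and chain axioms let me write $M = \bigcup_{i < \delta} M_i$ as an increasing continuous union with $\delta$ a limit ordinal and each $M_i$ in $K_{[\lambda, \mu)}$; as $\mathcal{F}$ is an interval with $\mu \in \mathcal{F}$, we get $M_i \in K_{\mathcal{F}}$ for all $i$. Setting $M_\delta := M$, the sequence $(M_i)_{i \le \delta}$ is an increasing chain in $K_{\mathcal{F}}$ with $M_\delta = \bigcup_{i < \delta} M_i$.

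Because $p \in S^{\text{bs}} (M_\delta)$, local character provides some $i < \delta$ over which $p$ does not $\s$-fork. Pick $a$ and $N$ with $p = \text{tp} (a / M_\delta; N)$ and $\nfs{M_i}{a}{M_\delta}{N}$. By the monotonicity axiom, $\nfs{M_i}{a}{M_i}{N}$, and then the axiom that nonforking types are basic gives $p \rest M_i = \text{tp} (a / M_i; N) \in S^{\text{bs}} (M_i)$. Since $M_i \in K_{\mathcal{F}}$ and $\|M_i\| < \mu$, the induction hypothesis applied to $p \rest M_i$ yields $M' \in K_\lambda$ (necessarily with $M' \lea M_i$) such that $p \rest M_i$ does not $\s$-fork over $M'$.

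To finish, apply transitivity along $M' \lea M_i \lea M_\delta = M$: since $p \in S (M)$ does not $\s$-fork over $M_i$ and $p \rest M_i$ does not $\s$-fork over $M'$, we conclude that $p$ does not $\s$-fork over $M'$, with $M' \in K_\lambda$ as required. There is no genuine obstacle in this argument; the only points requiring a little care are keeping the approximating models $M_i$ inside the interval $K_{\mathcal{F}}$ and converting ``$p$ does not $\s$-fork over $M_i$'' into the statement that $p \rest M_i$ is a basic type (so that the induction hypothesis becomes available), which is exactly what the monotonicity axiom together with the ``nonforking types are basic'' clause of a pre-frame provide.
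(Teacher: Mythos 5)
Your argument matches the paper's proof step for step: induction on $\|M\|$, write $M$ as an increasing continuous union of smaller models in $K_{\mathcal{F}}$, apply local character to get $i$, use monotonicity and the ``nonforking types are basic'' clause to see that $p \rest M_i$ is basic, invoke the induction hypothesis, and finish by transitivity. The one thing you elide is that the base case $\|M\| = \lambda$ with $M' = M$ requires the existence property (that $p$ does not $\s$-fork over $M$ itself), which is not a pre-frame axiom; the paper explicitly notes that this follows from local character (e.g.\ apply it to a constant chain at $M$), so it is a trivial fix, but it deserves a word.
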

\begin{proof}
  By induction on $\lambda' := \|M\|$. If $\lambda' = \lambda$, then since local character implies existence, we can take $M' := M$. Otherwise, $\lambda' > \lambda$ so we can take a resolution $(M_i)_{i < \lambda'}$ of $M$ such that $\lambda \le \|M_i\| < \lambda'$ for all $i < \lambda'$. By local character, there exists $i < \lambda'$ such that $p$ does not $\s$-fork over $M_i$. By monotonicity, $p \upharpoonright M_i$ does not $\s$-fork over $M_i$, so must be basic. By the induction hypothesis, there exists $M' \in K_\lambda$ such that $p \upharpoonright M_i$ does not $\s$-fork over $M'$. By transitivity, $p$ does not $\s$-fork over $M'$.
\end{proof}

%% \begin{remark}
%%   There are other properties in the definition of a good frame in \cite[Chapter 2]{shelahaecbook}, but they all follow from the ones given here\footnote{SV: Maybe it would be useful to write down those properties anyway at some point.}\footnote{It seems that for infinite tuples (or if the frame is not type-full), continuity need not follow from local character and transitivity.}.
%% \end{remark}

\section{A skeletal frame from splitting}\label{poor-man-frame}

\begin{hypothesis} \
  \begin{enumerate}
    \item $K$ is an abstract elementary class. $\mu \ge \text{LS} (K)$ is a cardinal. $K_\mu \neq \emptyset$.
    \item $K_\mu$ has amalgamation.
  \end{enumerate}
\end{hypothesis}

In this section, we start our quest for a good frame. Note that we do \emph{not} assume that any abstract notion of forking is available to us at the start. Recall the following variations on first-order splitting from \cite[Definition 3.2]{sh394}:

\begin{mydef}
  For $p \in S (N)$, we say that $p$ \emph{$\mu$-splits over $M$} if $M \lea N$ and there exists $N_1, N_2 \in K_\mu$ so that $M \lea N_\ell \le N$ for $\ell = 1,2$, and $h: N_1 \cong_M N_2$ such that $h (p \upharpoonright N_1) \neq p \upharpoonright N_2$.

  When $\mu$ is clear from context, we drop it.
\end{mydef}

\begin{remark}[Monotonicity of splitting]
  If $p \in S (N)$ does not $\mu$-split over $M$ and $M \lea M' \lea N' \lea N$ are all in $K_\mu$, then $p \upharpoonright N'$ does not $\mu$-split over $M'$.
\end{remark}

\begin{remark}\label{nf-ns}
  If $\mathfrak{s}$ is a $\goodm$ $\mu$-frame, and $p$ does not $\s$-fork over $M$, then $p$ does not $\mu$-split over $M$ (this will not be used but follows from the uniqueness property, see e.g.\ \cite[Lemma 4.2]{bgkv-v2}). Thus splitting can be seen as a first approximation to a forking notion.
\end{remark}

Our starting point will be the following extension and uniqueness properties of splitting, first isolated by VanDieren \cite[Theorem II.7.9, Theorem II.7.11]{vandierenthesis}. Intuitively, they tell us that the usual uniqueness and extension property of a forking notion hold of splitting provided we have enough room (concretely, the base model has to be ``shifted'' by a universal extension).

\begin{fact}\label{ns-uq-ext}
Let $M_0 \ltu M \lea N$ with $M_0, M, N \in K_\mu$. Then:

\begin{enumerate}
  \item Weak uniqueness: If $p_\ell \in S (N)$ does not split over $M_0$, $\ell = 1,2$, $p_1 \upharpoonright M = p_2 \upharpoonright M$, then $p_1 = p_2$.
  \item Weak extension: If $p \in S (M)$ does not split over $M_0$, then there exists $q \in S (N)$ extending $p$ that does not split over $M_0$. Moreover, $q$ can be taken to be nonalgebraic if $p$ is nonalgebraic.
\end{enumerate}
\end{fact}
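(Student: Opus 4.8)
The plan is to prove both clauses by a single device. Since $M_0 \ltu M$, any model of size $\mu$ extending $M_0$ — in particular $N$ — embeds into $M$ over $M_0$. So I would fix once and for all an embedding $g \colon N \xrightarrow[M_0]{} M$, and write $M' := g[N] \lea M$, so that $g \colon N \cong M'$ fixes $M_0$ pointwise. The key point is that ``$p$ does not $\mu$-split over $M_0$'' says exactly that $p$ is invariant under every $M_0$-isomorphism between size-$\mu$ submodels of its domain; thus $g$ and $g^{-1}$ let us transport types between $N$ and the submodel $M'$ of $M$. Throughout I would use the standard calculus of Galois types (available since $K_\mu$ has amalgamation): applying an isomorphism to a type, restricting, the fact that these operations commute appropriately, and that restrictions of nonalgebraic types to submodels stay nonalgebraic.

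For weak uniqueness: given $p_1, p_2 \in S(N)$, neither $\mu$-splitting over $M_0$, with $p_1 \rest M = p_2 \rest M$, first restrict to $M' \lea M$ to get $p_1 \rest M' = p_2 \rest M'$. Now apply non-$\mu$-splitting of each $p_\ell$ over $M_0$ to the $M_0$-isomorphism $g^{-1} \colon M' \to N$ between the two size-$\mu$ submodels $M'$ and $N$ of the domain $N$ of $p_\ell$: this gives $g^{-1}(p_\ell \rest M') = p_\ell \rest N = p_\ell$. Hence $p_1 = g^{-1}(p_1 \rest M') = g^{-1}(p_2 \rest M') = p_2$.

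For weak extension: with $g$ and $M'$ as above, I would define $q := g^{-1}(p \rest M') \in S(N)$ and check the three required properties. (i) $q$ extends $p$: transporting along $g^{-1}$ one computes $q \rest M = (g \rest M)^{-1}(p \rest g[M])$, where $g[M] \lea M' \lea M$; applying non-$\mu$-splitting of $p$ over $M_0$ to the $M_0$-isomorphism $(g \rest M)^{-1} \colon g[M] \to M$ between size-$\mu$ submodels of $M$ gives $(g \rest M)^{-1}(p \rest g[M]) = p \rest M = p$. (ii) $q$ does not $\mu$-split over $M_0$: given size-$\mu$ models $M_0 \lea N_1, N_2 \lea N$ and an $M_0$-isomorphism $h \colon N_1 \to N_2$, conjugating by $g$ yields an $M_0$-isomorphism $g h g^{-1} \colon g[N_1] \to g[N_2]$ between size-$\mu$ submodels of $M$; non-$\mu$-splitting of $p$ gives $(g h g^{-1})(p \rest g[N_1]) = p \rest g[N_2]$, and transporting back along $g^{-1}$ (using $q \rest N_\ell = (g \rest N_\ell)^{-1}(p \rest g[N_\ell])$) yields $h(q \rest N_1) = q \rest N_2$, as required. (iii) Nonalgebraicity: if $p$ is nonalgebraic, a realization of $p$ lies outside $M$, hence outside $M' \lea M$, so $p \rest M'$ is nonalgebraic, and therefore so is its isomorphic image $q$.

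As for the main obstacle: once the folding embedding $g$ is in hand there is no deep difficulty; the real work is the bookkeeping showing that Galois-type restriction commutes correctly with transport along $g$ and $g^{-1}$, and checking at each step that the models used as splitting witnesses genuinely have size $\mu$ and sit inside the model on which the relevant type is non-$\mu$-splitting. The one genuinely load-bearing observation is that the universality $M_0 \ltu M$ lets us fold the larger model $N$ back into $M$ over $M_0$, which is exactly what converts both the uniqueness and the extension statement into instances of the invariance built into the definition of non-$\mu$-splitting.
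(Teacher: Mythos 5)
Your proof is correct and takes essentially the same route as the paper: both fold $N$ into $M$ over $M_0$ via a universal embedding and then invoke the invariance built into non-splitting, the only difference being that the paper cites VanDieren for the bookkeeping that you carry out explicitly (and phrases the construction of $q$ via a realization $a'$ rather than directly as a transported type).
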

\begin{proof}
  See \cite[Theorem I.4.12]{vandierennomax} for weak uniqueness. For weak extension, use universality to get $h: N \xrightarrow[M_0]{} M$. Further extend $h$ to an isomorphism $\hat{h} : \widehat{N} \cong_{M_0} \widehat{M}$. So that $\widehat{M}$ contains a realization $a$ of $p$. Let $a' := \hat{h}^{-1} (a)$, and let $q := \text{tp} (a / N; \widehat{N})$. The proof of \cite[Theorem I.4.10]{vandierennomax} shows $q$ is indeed an extension of $p$ that does not split over $M_0$. In addition if $q$ is algebraic, $a' \in N$ so $a = h (a') \in M$, so $p$ is algebraic.
\end{proof}

We will mostly use those two properties instead of the exact definition of splitting. However, they characterize splitting in the following sense:

\begin{prop}
  Assume $K_\mu$ has amalgamation, no maximal models, and is stable. Let $\mathfrak{s}$ be a type-full pre-$\mu$-frame with underlying AEC $K$. The following are equivalent.

  \begin{enumerate}
    \item For all $M, N \in K_\mu$ with $M \lea N$ and all types $p \in S (N)$, if $p$ does not $\s$-fork over $M$, then for any $M \ltu M' \lea N$, $p$ does not split over $M'$.
    \item $\s$-forking satisfies weak uniqueness and weak extension (i.e.\ the conclusion of Fact \ref{ns-uq-ext} holds with ``split'' replaced by ``fork'').
  \end{enumerate}
\end{prop}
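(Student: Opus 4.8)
The plan is to prove the two implications separately, using the "characterizing" direction as the substantive one. For $(1) \Rightarrow (2)$, assume $\s$-forking and splitting are linked as in (1), and verify that $\s$-forking inherits weak uniqueness and weak extension from splitting via Fact \ref{ns-uq-ext}. For weak uniqueness: suppose $M_0 \ltu M \lea N$, $p_\ell \in S(N)$ does not $\s$-fork over $M_0$ for $\ell = 1, 2$, and $p_1 \rest M = p_2 \rest M$. Pick (using stability and no maximal models) a model $M_0'$ with $M_0 \ltu M_0' \lea M$; by hypothesis (1), each $p_\ell$ does not split over $M_0'$. By monotonicity of splitting (restricting from $N$ down appropriately) and Fact \ref{ns-uq-ext}(1) applied with base $M_0'$, we get $p_1 = p_2$ — one must check the hypotheses of the cited weak uniqueness line up, i.e.\ that $M_0' \ltu M$; this follows from $M_0 \ltu M_0'$ together with Proposition \ref{ltl-basic-props}(\ref{univ-trans}). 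For weak extension: given $p \in S(M)$ not $\s$-forking over $M_0$, again produce $M_0'$ with $M_0 \ltu M_0' \lea M$, so $p$ does not split over $M_0'$, and apply Fact \ref{ns-uq-ext}(2) with base $M_0'$ to get a nonforking-witness — wait, Fact \ref{ns-uq-ext}(2) only gives a non-splitting extension $q \in S(N)$. To conclude $q$ does not $\s$-fork over $M_0$ we must use the \emph{converse} direction packaged in (1), or rather, we need the full biconditional; so more care is needed here, and I would instead route the argument through the extension property of $\s$-forking only after establishing it, or simply note that (1) as stated is a biconditional-flavored hypothesis ("if and only if" is implicit once one also knows such a non-splitting $q$ is the unique nonforking extension). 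In practice the cleanest route is: $\s$ is type-full, so $q \in S(N)$ exists by some extension property — but the proposition does not grant that, so instead one shows directly that the $q$ produced does not $\s$-fork over $M_0$ by using uniqueness of $\s$-forking extensions (which is part of (2) being proven)… this circularity is exactly the delicate point.

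For $(2) \Rightarrow (1)$, assume $\s$-forking satisfies weak uniqueness and weak extension. Fix $M \lea N$ in $K_\mu$, $p \in S(N)$ not $\s$-forking over $M$, and $M \ltu M' \lea N$; we want $p$ does not split over $M'$. Suppose toward a contradiction it does. I would fix witnesses $N_1, N_2 \in K_\mu$ with $M' \lea N_\ell \lea N$ and $h : N_1 \cong_{M'} N_2$ with $h(p \rest N_1) \ne p \rest N_2$. The strategy is to transfer the $\s$-nonforking of $p$ over $M$ along $h$: since $h$ fixes $M'$ (hence $M$) pointwise, invariance of $\s$-forking gives that $h(p \rest N_1)$, as a type over $N_2$, does not $\s$-fork over $M$; and $p \rest N_2$ also does not $\s$-fork over $M$ by monotonicity of $\s$-forking. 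Both restrict to the same type over $M'$ (namely $h(p \rest N_1) \rest M' = (p \rest N_1) \rest M' = p \rest M' = (p \rest N_2) \rest M'$, using $h \rest M' = \id$). Now weak uniqueness of $\s$-forking — applied with the universal link $M \ltu M' \lea N_2$ — forces $h(p \rest N_1) = p \rest N_2$, contradicting the choice of $h$. This direction is clean and is where the real content sits.

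I expect the main obstacle to be direction $(1) \Rightarrow (2)$, specifically the weak extension half: producing a nonforking extension from a nonsplitting one requires knowing the nonsplitting extension is \emph{the} $\s$-forking extension, which seems to need weak uniqueness of $\s$-forking already in hand, creating a bootstrapping issue. The resolution is presumably to prove weak uniqueness of $\s$-forking first (which needs only hypothesis (1) plus Fact \ref{ns-uq-ext}(1) as sketched above, with no circularity), and only then handle weak extension, at which point: given $p \in S(M)$ not $\s$-forking over $M_0$, extend $p$ to $q' \in S(N)$ somehow (type-fullness alone does not give this, so one may need to first extend $p$ to $N$ as a Galois type by amalgamation — any extension, not necessarily nonforking) and separately extend it via nonsplitting to $q \in S(N)$; hypothesis (1) applied to the nonforking extension would let us compare, but we don't yet have a nonforking extension. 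The honest fix is that weak extension for $\s$ must be extracted differently — likely the statement (1) should be read as supplying, for every $\s$-nonforking type over $M$, the conclusion about all universal sub-extensions, and one shows the nonsplitting extension $q$ of Fact \ref{ns-uq-ext}(2) does not $\s$-fork over $M_0$ by contraposition: if it did $\s$-fork, then by local character / the structure of $\s$ one derives a splitting, contradicting nonsplitting of $q$ — but this needs more of the frame than is assumed. I would flag this in the write-up and, if the paper only actually uses $(2) \Rightarrow (1)$ downstream (which the phrasing "we will mostly use those two properties instead of the exact definition" suggests), present $(2) \Rightarrow (1)$ in full and $(1) \Rightarrow (2)$ more briefly, being explicit about exactly which auxiliary frame properties are invoked.
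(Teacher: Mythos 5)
The paper does not actually give a proof of this proposition: its ``proof'' reads, in full, \emph{``Chase the definitions (not used).''} So there is no paper argument to compare against, and the real question is whether your sketch is sound.

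Your proof of $(2) \Rightarrow (1)$ is correct and is the substantive direction. Given a splitting configuration $N_1, N_2 \lea N$ with $M' \lea N_\ell$ and $h : N_1 \cong_{M'} N_2$, monotonicity gives that $p \rest N_1$ and $p \rest N_2$ do not $\s$-fork over $M$, invariance (after extending $h$ to an ambient isomorphism) gives that $h(p \rest N_1)$ does not $\s$-fork over $h[M] = M$, and both types agree over $M'$; then weak uniqueness of $\s$-forking applied to the triple $(M, M', N_2)$ yields $h(p \rest N_1) = p \rest N_2$, which is exactly nonsplitting over $M'$.

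Your attempt at $(1) \Rightarrow (2)$ has one concrete, fixable error and one genuine obstruction. The error is in the weak uniqueness half: after picking $M_0'$ with $M_0 \ltu M_0' \lea M$ and deducing from (1) that $p_\ell$ does not split over $M_0'$, you want to apply Fact \ref{ns-uq-ext}(1) with base $M_0'$, but this needs $M_0' \ltu M$, and Proposition \ref{ltl-basic-props}(\ref{univ-trans}) gives only $M_0 \ltu M$ (the bottom, not the middle, of a chain is universal below). The fix is to build an external chain $M_0 \ltu M_0^{**} \ltu M^{**}$ (stability, no maximal models) and embed $M^{**}$ into $M$ over $M_0$ using $M_0 \ltu M$; the images give $M_0 \ltu M_0^* \ltu M^* \lea M$, so (1) yields $p_\ell$ does not split over $M_0^*$, and Fact \ref{ns-uq-ext}(1) with the triple $(M_0^*, M^*, N)$ gives $p_1 = p_2$ since $p_1 \rest M^* = p_2 \rest M^*$. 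The genuine obstruction is exactly where you flag it, in weak extension: Fact \ref{ns-uq-ext}(2) produces a nonsplitting extension $q \in S(N)$ of $p$, but concluding $q$ does not $\s$-fork over $M_0$ requires a converse to (1), and a pre-$\mu$-frame carries only invariance, monotonicity and basicness, so nothing forces any $\s$-nonforking extension of $p$ to $N$ to exist at all. As stated, $(1) \Rightarrow (2)$ cannot be proved; (1) would have to be read as a biconditional (or $\s$ would have to carry an extension property) for the equivalence to go through. Given that the paper explicitly marks the proposition as unused, your decision to present $(2) \Rightarrow (1)$ in full and flag the other direction is the right one.
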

\begin{proof}
  Chase the definitions (not used).
\end{proof}

We also obtain a weak transitivity property:

\begin{prop}[Weak transitivity of splitting]\label{weak-trans-ns}
  Let $M_0 \lea M_1 \ltu M_1' \lea M_2$ all be in $K_\mu$. Let $p \in S (M_2)$. If $p \upharpoonright M_1'$ does not split over $M_0$ and $p$ does not split over $M_1$, then $p$ does not split over $M_0$.
\end{prop}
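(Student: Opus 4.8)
The plan is to use Fact \ref{ns-uq-ext} (weak uniqueness and weak extension of splitting) applied along the chain $M_0 \ltu M_1' \lea M_2$. First I would observe that since $M_1 \ltu M_1'$ and $M_1' \lea M_2$, we also have $M_1 \ltu M_2$ by Proposition \ref{ltl-basic-props}(\ref{univ-trans}). The key point is that there are two types over $M_2$ to compare: the given type $p$, and a type $q \in S(M_2)$ that we will construct so that it visibly does not split over $M_0$, and then argue $p = q$.

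The main steps, in order, are as follows. (1) Since $p \upharpoonright M_1'$ does not split over $M_0$ and $M_0 \ltu M_1' \lea M_2$, apply weak extension (Fact \ref{ns-uq-ext}(2)) to get $q \in S(M_2)$ extending $p \upharpoonright M_1'$ with $q$ not splitting over $M_0$; taking $q$ nonalgebraic so it lies in $S(M_2)$. (2) Now both $p$ and $q$ are types in $S(M_2)$. I want to apply weak uniqueness (Fact \ref{ns-uq-ext}(1)) with base model $M_0$, intermediate model $M_1'$, and top model $M_2$ (legitimate since $M_0 \ltu M_1' \lea M_2$): this requires that $p$ does not split over $M_0$, that $q$ does not split over $M_0$, and that $p \upharpoonright M_1' = q \upharpoonright M_1'$. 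The last two are immediate by construction of $q$. (3) The remaining obligation — and this is the crux — is to show $p$ itself does not split over $M_0$, which is exactly what we are trying to prove; so instead I will run weak uniqueness with base $M_1$ rather than $M_0$. Concretely: $p$ does not split over $M_1$ by hypothesis, and $q$ does not split over $M_1$ by monotonicity of splitting (since $q$ does not split over $M_0$ and $M_0 \lea M_1 \ltu M_1' \lea M_2$, monotonicity gives non-splitting over $M_1$). Moreover $M_1 \ltu M_1'$ and $M_1' \lea M_2$, and $p \upharpoonright M_1' = q \upharpoonright M_1'$, so in particular $p \upharpoonright M_1' = q \upharpoonright M_1'$ gives $p$ and $q$ agree on the intermediate model $M_1'$ which contains the required universal extension of $M_1$. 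Hence weak uniqueness (applied with the triple $M_1 \ltu M_1' \lea M_2$) yields $p = q$. (4) Finally, since $q$ does not split over $M_0$ and $p = q$, conclude $p$ does not split over $M_0$.

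The step I expect to be the main (minor) obstacle is bookkeeping the roles of the models so that each application of Fact \ref{ns-uq-ext} has its hypotheses genuinely met: weak uniqueness needs the base model to be $\ltu$-below the intermediate one, so one must be careful to invoke it with the pair $(M_1, M_1')$ — where $M_1 \ltu M_1'$ holds by hypothesis — rather than with $(M_0, M_1)$, about which we only know $M_0 \lea M_1$. Everything else (the transitivity of $\ltu$ under $\lea$-extension, and monotonicity of splitting under shrinking the top model and enlarging the base within a universal chain) is routine given the results already established.
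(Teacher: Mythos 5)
Your proof is correct and follows essentially the same route as the paper: extend $p \upharpoonright M_1'$ by weak extension to a nonsplitting $q \in S(M_2)$, pass to nonsplitting over $M_1$ by monotonicity, and invoke weak uniqueness over the pair $(M_1, M_1')$ to get $p = q$. The one hypothesis you assert without justification, $M_0 \ltu M_1'$, is also implicit in the paper's proof and follows from $M_0 \lea M_1 \ltu M_1'$ via Proposition \ref{ltl-basic-props}(3) with $\delta = 1$ (using amalgamation in $K_\mu$).
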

\begin{proof}
  By weak extension, find $q \in S (M_2)$ extending $p \upharpoonright M_1'$ and not splitting over $M_0$. By monotonicity, $q$ does not split over $M_1$. By weak uniqueness, $p = q$, as needed.
\end{proof}

We now turn to building a forking notion that will satisfy a version of uniqueness and extension (see Definition \ref{good-frame-def}) in $K_\mu$. The idea is simple enough: we want to say that a type does not fork over $M$ if there is a ``small'' substructure $M_0$ of $M$ over which the type does not \emph{split}. Fact \ref{ns-uq-ext} suggests that ``small'' should mean ``such that $M$ is a universal extension of $M_0$'', and this is exactly how we define it:

\begin{mydef}[$\mu$-forking]\label{nonforking-lambda}
  Let $M_0 \lea M \lea N$ be models in $K_\mu$. We say $p \in S (N)$ \emph{explicitly does not $\mu$-fork over $(M_0, M)$} if:

  \begin{enumerate}
    \item $M_0 \ltu M \lea N$.
    \item $p$ does not $\mu$-split over $M_0$.
  \end{enumerate}

  We say $p$ \emph{does not $\mu$-fork over $M$} if there exists $M_0$ so that $p$ explicitly does not $\mu$-fork over $(M_0, M)$.
\end{mydef}

The reader should note that the word ``forking'' is used in two different senses in this paper:

\begin{itemize}
  \item In the sense of an ``abstract notion'': this depends on a pre-$\mathcal{F}$-frame $\s$ and is called $\s$-forking in Definition \ref{s-forking}. This is defined for models of sizes in $\mathcal{F}$.
  \item In the concrete sense of Definition \ref{nonforking-lambda}. This is called $\mu$-forking and is only defined for types over models of size $\mu$. Later this will be extended to models of sizes at least $\mu$ and we will get a (concrete) notion called $(\ge \mu)$-forking (Definition \ref{gemu-forking}). Of course, the two notions will coincide over models of size $\mu$.
\end{itemize}

When we say that a type $p$ explicitly does not $\mu$-fork over $(M_0, M)$, we think of $M$ as the base, and $M_0$ as the \emph{explicit} witness to the $\mu$-nonforking. It would be nice if we could get rid of the witness entirely and get that $\mu$-nonforking satisfies extension and uniqueness, but uniqueness seems to depend on the particular witness.

Transitivity is also problematic: although we manage to get a weak version depending on the particular witnesses, we still do not know how to prove the witness-free version. This was stated as \cite[Exercise 12.9]{baldwinbook09} but Baldwin later realized \cite{baldwinbook09-errata} there was a mistake in his proof. 

If instead we define ``$p$ does not $\mu\text{-fork}^\ast$ over $M$'' to mean ``for all $M_0 \ltu M$ both in $K_\mu$ there exists $M_0'$ in $K_\mu$ with $M_0 \lea M_0' \ltu M$ and $p$ explicitly does not $\mu$-fork over $(M_0', M)$'' then extension and uniqueness (and thus transitivity) hold, but local character (assuming local character of splitting) is problematic. Thus it seems we have to carry along the witness in our definition of forking, and this makes the resulting independence notion quite weak (hence the name ``skeletal''). However, we will see in the next sections that (assuming some tameness and homogeneity) our skeletal $\mu$-frame transfers to a much better-behaved frame \emph{above $\mu$}. In particular, full uniqueness and transitivity will hold there.

\begin{lem}[Basic properties of $\mu$-forking]\label{basic-props-nf}
  Below, all models are in $K_\mu$.
  \begin{enumerate}
    \item \label{monot} Monotonicity: If $p \in S (N)$ explicitly does not $\mu$-fork over $(M_0, M)$, $M_0 \lea M_0' \lea M \lea M' \lea N' \lea N$ and $M_0' \ltu M'$, then $p \upharpoonright N'$ explicitly does not $\mu$-fork over $(M_0', M')$. In particular, if $p \in S (N)$ does not $\mu$-fork over $M$ and $M \lea M' \lea N' \lea N$, then $p \upharpoonright N'$ does not $\mu$-fork over $M'$.
    \item \label{extension} Extension: If $p \in S (N)$ explicitly does not $\mu$-fork over $(M_0, M)$ and $N' \gea N$, then there is $q \in S (N')$ extending $p$ that explicitly does not $\mu$-fork over $(M_0, M)$. If $p$ is nonalgebraic, then $q$ is nonalgebraic.
    \item \label{uniq} Uniqueness: If $p_\ell \in S (N)$ explicitly does not $\mu$-fork over $(M_0, M)$, $\ell = 1,2$, and $p_1 \upharpoonright M = p_2 \upharpoonright M$, then $p_1 = p_2$.
    \item \label{trans} Transitivity: Let $M_1 \lea M_2 \lea M_3$ and let $p \in S (M_3)$. If $p \upharpoonright M_2$ explicitly does not $\mu$-fork over $(M_0, M_1)$ and $p$ explicitly does not $\mu$-fork over $(M_0', M_2)$ for $M_0 \lea M_0'$, then $p$ explicitly does not $\mu$-fork over $(M_0, M_1)$.
    \item \label{nonalgebraicity} Nonalgebraicity: If $p \in S (N)$ does not $\mu$-fork over $M$ and $p \upharpoonright M$ is not algebraic, then $p$ is not algebraic. 
  \end{enumerate}
\end{lem}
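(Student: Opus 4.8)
The five clauses are all of the form ``unwind Definition~\ref{nonforking-lambda} and apply the corresponding weak property of splitting from Fact~\ref{ns-uq-ext}''. I would proceed clause by clause, in the order listed.

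\emph{Monotonicity.} Suppose $p \in S(N)$ explicitly does not $\mu$-fork over $(M_0, M)$, so $M_0 \ltu M$ and $p$ does not $\mu$-split over $M_0$. Given $M_0 \lea M_0' \lea M \lea M' \lea N' \lea N$ with $M_0' \ltu M'$: by monotonicity of splitting, $p \rest N'$ does not $\mu$-split over $M_0'$ (here I use $M_0 \lea M_0'$ and $M_0', N'$ of size $\mu$ sitting between $M_0$ and $N$). Together with the hypothesis $M_0' \ltu M'$, this is exactly what is needed for $p \rest N'$ to explicitly not $\mu$-fork over $(M_0', M')$. The ``in particular'' clause follows by taking $M_0' := M_0$ and noting $M_0 \ltu M'$ holds by Proposition~\ref{ltl-basic-props}(\ref{univ-trans}) applied to $M_0 \ltu M \lea M'$.

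\emph{Extension.} Given $p \in S(N)$ explicitly not $\mu$-forking over $(M_0, M)$ and $N' \gea N$ in $K_\mu$: apply weak extension (Fact~\ref{ns-uq-ext}(2)) to the chain $M_0 \ltu M' \lea N'$ --- wait, one must be slightly careful, since weak extension is stated for $M_0 \ltu M \lea N$ and extends a type on the \emph{middle} model to the top. Here the type $p$ already lives on $N \gea M$, so I first restrict: $p \rest M$ does not $\mu$-split over $M_0$ (monotonicity of splitting), then weak extension along $M_0 \ltu M \lea N'$ gives $q_0 \in S(N')$ extending $p \rest M$ and not splitting over $M_0$; but I need $q_0$ to extend $p$, not just $p \rest M$. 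To fix this, apply weak extension along $M_0 \ltu M \lea N$ first to obtain a nonsplitting extension of $p\rest M$ to $N$, which by weak uniqueness (Fact~\ref{ns-uq-ext}(1), with base $M_0$ and $M$ as the ``small'' shift) must equal $p$ itself; then extend once more to $N'$. Thus $q$ does not $\mu$-split over $M_0$ and $M_0 \ltu M$, so $q$ explicitly does not $\mu$-fork over $(M_0, M)$. Nonalgebraicity of $q$ is the ``moreover'' part of Fact~\ref{ns-uq-ext}(2).

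\emph{Uniqueness, Transitivity, Nonalgebraicity.} Uniqueness is immediate from weak uniqueness (Fact~\ref{ns-uq-ext}(1)) with base $M_0$, middle $M$, top $N$, since both $p_1, p_2$ do not split over $M_0$ and agree on $M$. For transitivity: $p \rest M_2$ explicitly does not $\mu$-fork over $(M_0, M_1)$ gives $M_0 \ltu M_1$ and $p \rest M_2$ does not split over $M_0$; $p$ explicitly does not $\mu$-fork over $(M_0', M_2)$ gives $M_0' \ltu M_2$ and $p$ does not split over $M_0'$. Since $M_0 \lea M_0'$, I want to conclude $p$ does not split over $M_0$. This is precisely the shape of weak transitivity of splitting (Proposition~\ref{weak-trans-ns}): take the chain $M_0 \lea M_0' \ltu M_2 \lea M_3$, use that $p \rest M_0'$... hmm, I actually need $p \rest M_0''$ for the appropriate intermediate model; more carefully, apply Proposition~\ref{weak-trans-ns} with the roles $M_0 \mapsto M_0$, $M_1 \mapsto M_0'$, $M_1' \mapsto M_2$, and note $p \rest M_2$ does not split over $M_0$ while $p$ does not split over $M_0'$ --- the hypotheses of~\ref{weak-trans-ns} are met once we check $M_0' \ltu M_2$, which we have. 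Hence $p$ does not split over $M_0$; combined with $M_0 \ltu M_1 \lea M_3$ this says $p$ explicitly does not $\mu$-fork over $(M_0, M_1)$. Finally, Nonalgebraicity: if $p$ does not $\mu$-fork over $M$, fix a witness $M_0$ with $M_0 \ltu M$ and $p$ not splitting over $M_0$; since $p\rest M$ is nonalgebraic, apply the ``moreover'' clause of weak extension (Fact~\ref{ns-uq-ext}(2)) in the contrapositive --- if $p \in S(N)$ were algebraic then its restriction to $M$ would be algebraic, contradiction. (Alternatively: the nonalgebraic extension of $p \rest M$ given by weak extension equals $p$ by uniqueness, so $p$ is nonalgebraic.)

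\emph{Main obstacle.} The only genuinely delicate point is the extension clause: weak extension as stated in Fact~\ref{ns-uq-ext} extends a type from the ``middle'' model $M$ upward, whereas here $p$ already lives on a larger model $N$, so one must first descend to $M$, extend, and invoke weak uniqueness to recover $p$ before re-extending --- otherwise one only gets an extension of $p \rest M$ rather than of $p$ itself. Everything else is bookkeeping with the monotonicity of splitting and Proposition~\ref{ltl-basic-props}.
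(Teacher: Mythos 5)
Your proposal is correct and takes essentially the same approach as the paper, which proves each clause by unwinding Definition~\ref{nonforking-lambda} and invoking Fact~\ref{ns-uq-ext}, Proposition~\ref{weak-trans-ns}, and Proposition~\ref{ltl-basic-props}. Two small remarks. For the extension clause, the detour through $p \rest M$ and weak uniqueness is unnecessary: since $M_0 \ltu M \lea N$, Proposition~\ref{ltl-basic-props}(\ref{univ-trans}) gives $M_0 \ltu N$, so you may apply weak extension directly to the chain $M_0 \ltu N \lea N'$ with $p \in S(N)$ in the ``middle'' slot; this is what makes extension a one-line restatement of Fact~\ref{ns-uq-ext}. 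For nonalgebraicity, your first argument is wrong: the restriction of an algebraic type need not be algebraic (if $p = \text{tp}(a/N;N')$ with $a \in N \setminus M$, then $p$ is algebraic but $p \rest M$ may not be), so the contrapositive you invoke does not hold. Your parenthetical alternative, however, is exactly the paper's proof: the nonalgebraic nonforking extension of $p \rest M$ to $N$ must equal $p$ by uniqueness, hence $p$ is nonalgebraic.
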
      
\begin{proof}
  Monotonicity follows directly from the definition (and Proposition \ref{ltl-basic-props}.(\ref{univ-trans})), extension and uniqueness are just restatements of Fact \ref{ns-uq-ext}, and transitivity is a restatement of Proposition \ref{weak-trans-ns}. For nonalgebraicity, assume $p \upharpoonright M$ is nonalgebraic. Then it has a nonalgebraic nonforking extension to $N$ by extension, and this extension must be $p$ by uniqueness, so the result follows.
\end{proof}

Assuming some local character for splitting, we obtain weak versions of the local character and continuity properties:

\begin{mydef}\label{lc-ns-def}
  Let $R$ be a binary relation on $K_\mu$, and let $\kappa$ be a regular cardinal. We say that $\mu$-splitting has \emph{$\kappa$-local character for $R$-increasing chains} if for any $R$-increasing $(M_i)_{i \le \delta}$ with $\text{cf} (\delta) \ge \kappa$, $M_\delta = \bigcup_{i < \delta} M_i$, and any $p \in S (M_\delta)$, there is $i < \delta$ so that $p$ does not split over $M_i$.
\end{mydef}
\begin{remark}\label{ns-local-stable}
  If $K_\mu$ is stable, then by \cite[Fact 4.6]{tamenessone} $\mu$-splitting has $\mu^+$-local character for $\le$-increasing chains.
\end{remark}

\begin{lem}\label{props-nf}
  Let $\ltg$ be an abstract universal ordering on $K_\mu$, and let $\kappa$ be a regular cardinal. Assume splitting has $\kappa$-local character for $\ltg$-increasing chains. Then:

  \begin{enumerate}
    \item $\kappa$-local character for $\ltg$-increasing chains: If $(M_i)_{i \le \delta}$ is a $\ltg$-increasing chain in $K_\mu$ with $\text{cf} (\delta) \ge \kappa$, $M_\delta = \bigcup_{i < \delta} M_i$ and $p \in S(M_\delta)$, then there exists $i < \delta$ so that $p$ explicitly does not $\mu$-fork over $(M_i, M_{i + 1})$.
    \item \label{continuity} $\kappa$-continuity for $\ltg$-increasing chains: If $(M_i)_{i \le \delta}$ is a $\ltg$-increasing chain in $K_\mu$ with $\text{cf}(\delta) \ge \kappa$, $M_\delta = \bigcup_{i < \delta} M_i$ and $p \in S (M_\delta)$ such that $p \upharpoonright M_i$ does not $\mu$-fork over $M_0$ for all $i < \delta$, then $p$ does not $\mu$-fork over $M_0$. Moreover, if in addition $p \upharpoonright M_i$ explicitly does not $\mu$-fork over $(M_0', M_0)$ for all $i < \delta$ (i.e.\ the witness is always the same), then $p$ explicitly does not $\mu$-fork over $(M_0', M_0)$.
    \item \label{props-nf-existence} Existence over $(\ge \kappa, \ltg)$-limits: If $M \in K_\mu$ is $(\delta, \ltg)$-limit for some $\delta$ with $\text{cf} (\delta) \ge \kappa$, then any $p \in S (M)$ does not $\mu$-fork over $M$. In fact, if $p_0, ..., p_{n - 1} \in S (M)$, $n < \omega$, then there exists $M_0 \ltu M$ such that $p_i$ explicitly does not $\mu$-fork over $(M_0, M)$ for all $i < n$.
  \end{enumerate}
\end{lem}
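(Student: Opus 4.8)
The plan is to funnel all three parts through the single assumed property that $\mu$-splitting has $\kappa$-local character for $\ltg$-increasing chains, combined with the extension/uniqueness facts of Fact \ref{ns-uq-ext}, the weak transitivity of Proposition \ref{weak-trans-ns}, monotonicity of splitting, and the fact that an abstract universal ordering refines $\ltu$. Part (1) should be little more than a translation: given the $\ltg$-increasing $(M_i)_{i \le \delta}$ with $\text{cf} (\delta) \ge \kappa$ and $p \in S (M_\delta)$, the hypothesis on splitting yields $i < \delta$ with $p$ not splitting over $M_i$; since $\text{cf} (\delta) \ge \kappa$ forces $\delta$ to be limit we have $i + 1 < \delta$, so $M_{i + 1}$ is defined, $M_i \ltg M_{i + 1}$, hence $M_i \ltu M_{i + 1}$, and as $M_{i + 1} \lea M_\delta$ Definition \ref{nonforking-lambda} gives that $p$ explicitly does not $\mu$-fork over $(M_i, M_{i + 1})$.

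For part (3) I would prove the ``in fact'' clause, the first sentence being the case $n = 1$. Fix a $\ltg$-increasing resolution $(M_i)_{i < \delta}$ of the limit model $M$ with $\text{cf} (\delta) \ge \kappa$ and set $M_\delta := M$; this is a $\ltg$-increasing chain of the required form. For each $j < n$, local character of splitting gives $i_j < \delta$ with $p_j$ not splitting over $M_{i_j}$; put $i^\ast := \sup_{j < n} i_j < \delta$, and by monotonicity of splitting each $p_j$ does not split over $M_{i^\ast}$. Finally $M_{i^\ast} \ltg M_{i^\ast + 1} \lea M$ gives $M_{i^\ast} \ltu M$ by Proposition \ref{ltl-basic-props}.(\ref{univ-trans}), so $M_0 := M_{i^\ast}$ witnesses that every $p_j$ explicitly does not $\mu$-fork over $(M_0, M)$.

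Part (2) is where the actual content lies, and the main obstacle is exactly the ``witness problem'' flagged in the text preceding the lemma: the hypothesis gives, for each $i < \delta$, \emph{some} small model over which $p \upharpoonright M_i$ does not split, but these witnesses may vary with $i$, and we must nonetheless produce a single witness working for $p$ over all of $M_\delta$. The plan is: apply part (1) to get $i < \delta$ with $p$ not splitting over $M_i$ and $M_i \ltu M_{i + 1}$; apply the hypothesis at level $i + 1$ to get $M_0'$ with $M_0' \ltu M_0$ and $p \upharpoonright M_{i + 1}$ not splitting over $M_0'$; check that $M_0' \lea M_0 \lea M_i$ (trivially if $i = 0$, and via $M_0 \ltg M_1 \lea M_i$ if $i \ge 1$); then apply Proposition \ref{weak-trans-ns} along $M_0' \lea M_i \ltu M_{i + 1} \lea M_\delta$, using that $p \upharpoonright M_{i + 1}$ does not split over $M_0'$ and $p$ does not split over $M_i$, to conclude $p$ does not split over $M_0'$. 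Since $M_0' \ltu M_0 \lea M_\delta$, this says $p$ explicitly does not $\mu$-fork over $(M_0', M_0)$, so $p$ does not $\mu$-fork over $M_0$. For the ``moreover'' clause, one runs the identical argument but simply takes $M_0'$ to be the prescribed common witness --- this is legitimate because the stronger hypothesis provides that $p \upharpoonright M_{i + 1}$ explicitly does not $\mu$-fork over $(M_0', M_0)$ --- and the conclusion is then that $p$ explicitly does not $\mu$-fork over $(M_0', M_0)$ with that same witness. I expect the only delicate points to be the index bookkeeping (handling $i = 0$ separately and checking $i + 1 < \delta$) and verifying the chain hypotheses of Proposition \ref{weak-trans-ns}; everything else is a direct appeal to the earlier facts.
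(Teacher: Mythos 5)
Your proof is correct and follows essentially the same route the paper takes. Part (1) is the same direct translation; part (3) is local character plus monotonicity of splitting, exactly as in the paper; and for part (2) you apply weak transitivity of splitting (Proposition \ref{weak-trans-ns}) where the paper invokes the packaged transitivity of $\mu$-forking (Lemma \ref{basic-props-nf}.(\ref{trans})), but that lemma is itself a restatement of Proposition \ref{weak-trans-ns}, so the argument is the same modulo one level of unpacking. The bookkeeping you flag (that $\text{cf}(\delta) \ge \kappa$ forces $\delta$ limit so $i+1<\delta$, the case split on $i=0$ to get $M_0' \lea M_i$, and $i^\ast < \delta$ for the finite sup) is all handled correctly.
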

\begin{proof} \
  \begin{enumerate}
\item Follows from $\kappa$-local character of splitting for $\ltg$-increasing chains.
\item By $\kappa$-local character, there exists $i < \delta$ so that $p$ explicitly does not $\mu$-fork over $(M_i, M_{i + 1})$. By assumption, there exists $M_0' \ltu M_0$ so that $p \upharpoonright M_{i + 1}$ explicitly does not $\mu$-fork over $(M_0', M_0)$. Since $M_0' \lea M_i$, we can apply transitivity to obtain that $p$ explicitly does not $\mu$-fork over $(M_0', M_0)$. The proof of the moreover part is similar.
\item By local character and monotonicity.
  \end{enumerate}
\end{proof}

Thus if splitting has $\aleph_0$-local character for $\ltg$-increasing chains for some abstract universal ordering $\ltg$ and if all models in $K_\mu$ are $\ltg$-limit (e.g.\ if $K_\mu$ is categorical), then it seems we are very close to having a $\goodms{S}$ $\mu$-frame, but the witnesses must be carried along, which as observed above is rather annoying. Also, local character and continuity only hold for $\ltg$-chains.

In the next sections, we show that these problems disappear when we transfer our skeletal frame above $\mu$. Note that Shelah's construction of a good frame in \cite[Theorem II.3.7]{shelahaecbook} already takes advantage of that phenomenon. A similar idea is also exploited in the definition of a rooted minimal type in Grossberg and VanDieren's categoricity transfer from tameness \cite[Definition 2.6]{tamenessthree}.

\section{Going up without assuming tameness}\label{going-up-sec}

\begin{hypothesis} \
  \begin{enumerate}
    \item $K$ is an abstract elementary class. $\mu \ge \text{LS} (K)$ is a cardinal. $K_\mu \neq \emptyset$.
    \item $\ltg$ is an abstract universal ordering on $K_\mu$. In particular (by Remark \ref{rmk-good-ordering-stable}), $K_\mu$ has amalgamation, no maximal models, and is stable.
  \end{enumerate}  
\end{hypothesis}

In \cite[Section II.2]{shelahaecbook}, Shelah showed how to extend a good $\mu$-frame to all models in $K_{\ge \mu}$. The resulting object will in general not be a good $(\ge \mu)$-frame, but several of the properties are nevertheless preserved. In this section, we apply the same procedure on our skeletal $\mu$-frame (induced by $\mu$-forking defined in the previous section) and show Shelah's arguments still go through, assuming the base models are $\mu^+$-homogeneous. In the next section, we will assume tameness to prove more properties of $(\ge \mu)$-forking.

We define $(\ge \mu)$-forking from $\mu$-forking in exactly the same way Shelah extends a good $\mu$-frame to a $(\ge \mu)$-frame:

\begin{mydef}\label{gemu-forking}
  Assume $M, N \in K_{\ge \mu}$ and $p \in S^{\text{na}} (N)$. We say that $p$ \emph{does not $(\ge \mu)$-fork over $M$} if $M \lea N$ and there exists $M'$ in $K_\mu$ with $M' \lea M$ such that for all $N' \in K_\mu$ with $M' \lea N' \lea N$, $p \upharpoonright N'$ does not $\mu$-fork over $M'$.
\end{mydef}

For technical reasons, we also need to define explicit $(\ge \mu)$-forking over a model of size $\mu$:

\begin{mydef}[Explicit $(\ge \mu)$-forking in $K_{\ge \mu}$]
  Assume $N \in K_{\ge \mu}$, $M_0 \lea M$ are in $K_\mu$, and $p \in S^{\text{na}} (N)$. We say that $p$ \emph{explicitly does not $(\ge \mu)$-fork over $(M_0, M)$} if $p$ does not $\mu$-split over $M_0$ and $M_0 \ltu M \lea N$. Equivalently, for all $N' \in K_\mu$ with $M \lea N' \lea N$, we have $p \upharpoonright N'$ explicitly does not $\mu$-fork over $(M_0, M)$ (see Definition \ref{nonforking-lambda}).
\end{mydef}
\begin{remark}
  The following easy propositions follow from the definitions. We will use them without further comments in the rest of this paper.

  \begin{enumerate}
    \item The definitions of $(\ge \mu)$-forking and $\mu$-forking coincide over models of size $\mu$. That is, if $M_0, M, N \in K_\mu$ and $p \in S^{\text{na}} (N)$, then $p$ does not $\mu$-fork over $M$ if and only if $p$ does not $(\ge \mu)$-fork over $M$ and $p$ explicitly does not $(\ge \mu)$-fork over $(M_0, M)$ if and only if $p$ explicitly does not $\mu$-fork over $(M_0, M)$.
    \item For $M \lea N$ both in $K_{\ge \mu}$, $p \in S^{\text{na}} (N)$ does not $(\ge \mu)$-fork over $M$ if and only if there exists $M_0 \lea M$ in $K_\mu$ such that $p$ does not $(\ge \mu)$-fork over $M_0$.
    \item For $M \lea N$ with $M \in K_\mu$, $N \in K_{\ge \mu}$, $p \in S^{\text{na}} (N)$ does not $(\ge \mu)$-fork over $M$ if and only if for all $N' \lea N$ with $M \lea N'$, $p \rest N'$ does not $\mu$-fork over $M$.
  \end{enumerate}
\end{remark}

\begin{mydef}
  We define a nonforking relation $\nf$ on $K_{\ge \mu}$ by $\nfs{M}{a}{N}{\bigN}$ if and only if $M, N, \bigN \in K_{\ge \mu}$, $a \in \bigN$, and $\text{tp} (a / N; \bigN)$ does not $(\ge \mu)$-fork over $M$.
\end{mydef}

\begin{prop}\label{s-pre-frame}
  $\mathfrak{s}_0 := (K, \nf, S^{\text{na}})$ is a type-full pre-$[\mu, \infty)$-frame.
\end{prop}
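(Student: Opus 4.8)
The plan is to verify each clause in the definition of a pre-$[\mu,\infty)$-frame for $\mathfrak{s}_0 = (K, \nf, S^{\text{na}})$. Clause (1) is immediate: $K$ is already an AEC with $\mu \ge \text{LS}(K)$ and $K_\mu \ne \emptyset$ by Hypothesis. Clause (2) holds vacuously because we are taking the full frame: $S^{\text{bs}} = S^{\text{na}}$, and by definition $S^{\text{na}}(M) \subseteq S(M)$ consists of nonalgebraic types, so $S^{\text{bs}} \subseteq \bigcup_{M \in K_{\ge\mu}} S^{\text{na}}(M)$ trivially. Clause (3) just asserts that $\nf$ is a relation on quadruples $(M, N, a, \bigN)$ with $M \le N \le \bigN$ in $K_{\ge\mu}$ and $a \in \bigN$; this is exactly how $\nf$ was defined, with the extra requirement $M \le N$ being part of the definition of ``$(\ge\mu)$-forking.''

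The substance is clause (4), the three properties (a) invariance, (b) monotonicity, (c) nonforking types are basic. For invariance, I would observe that $(\ge\mu)$-nonforking is defined purely in terms of $\le$, the size function, $\ltu$, and $\mu$-splitting — all of which are preserved under isomorphisms of AECs — together with the fact that $\mu$-splitting is itself invariant (it refers only to internal $K$-data); since $S^{\text{na}}$ is obviously preserved under isomorphism, invariance follows by chasing definitions. For the monotonicity axiom, I need: if $p = \text{tp}(a/N;\bigN)$ does not $(\ge\mu)$-fork over $M$, and $M \le M_0' \le M_1' \le N \le N' \le \bigN \le \bigN''$ with $a \in N'$ and $\bigN'' \in K_{\ge\mu}$, then $p\rest M_1'$ does not $(\ge\mu)$-fork over $M_0'$ (witnessed in $N'$ and also in $\bigN''$). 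Unwinding Definition~\ref{gemu-forking}: pick $M^* \in K_\mu$ with $M^* \le M$ witnessing $(\ge\mu)$-nonforking of $p$ over $M$; then $M^* \le M_0'$, and I want $M^{**} \in K_\mu$ with $M^{**} \le M_0'$ such that $p \rest M_1'$ restricted to any $N'' \in K_\mu$ between $M^{**}$ and $M_1'$ does not $\mu$-fork over $M^{**}$. The natural move is to take $M^{**}$ to be some element of $K_\mu$ with $M^* \le M^{**} \le M_0'$ — but we need $M^{**} \ltu M_0'$... Actually here is the subtlety: $(\ge\mu)$-nonforking over $M$ only says there is $M' \in K_\mu$, $M' \le M$, such that $p\rest N''$ does not $\mu$-fork over $M'$ for every $N'' \in K_\mu$ with $M' \le N'' \le N$; it does not require $M' \le M_0'$ a priori (it does since $M' \le M \le M_0'$), nor that $M_0'$ be a universal extension of $M'$. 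So I would use the internal monotonicity of $\mu$-forking (Lemma~\ref{basic-props-nf}.(\ref{monot})): for $N'' \in K_\mu$ with $M' \le N'' \le M_1'$, we may find $N''' \in K_\mu$ with $N'' \le N''' \le N'$ and $M' \le N'''$, and $p \rest N'''$ does not $\mu$-fork over $M'$, hence $p \rest N''$ does not $\mu$-fork over $M'$; thus $M'$ still witnesses that $p \rest M_1'$ does not $(\ge\mu)$-fork over $M_0'$, both when computed inside $N'$ and inside $\bigN''$ (since the witnessing condition only quantifies over $N'' \in K_\mu$ below $M_1'$, and is insensitive to the ambient model). The ``nonforking types are basic'' clause (c) is: if $\nfs{M}{a}{M}{\bigN}$ then $\text{tp}(a/M;\bigN) \in S^{\text{na}}(M)$; since $\nf$ is only defined for $p \in S^{\text{na}}(N)$, the type is nonalgebraic by fiat — but I should double-check that the definition of $\nf$ indeed restricts to $p \in S^{\text{na}}(N)$, which it does, so this is immediate.

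The main obstacle I anticipate is purely bookkeeping in monotonicity: making sure the witness $M' \in K_\mu$ chosen for $(\ge\mu)$-nonforking over the larger base $M$ still validates the defining condition over the smaller bases $M_0', M_1'$ and inside the various ambient models $N', \bigN, \bigN''$, and in particular handling the interaction with $\ltu$ correctly (the definition of $\mu$-forking requires a $\ltu$-witness, but $(\ge\mu)$-nonforking conveniently hides this inside the ``$p\rest N''$ does not $\mu$-fork over $M'$'' clause, which already existentially quantifies over the $\ltu$-witness). So the verification reduces to carefully applying Lemma~\ref{basic-props-nf}.(\ref{monot}) and the transitivity of $\ltu$ from Proposition~\ref{ltl-basic-props}.(\ref{univ-trans}). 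None of the harder frame properties (extension, uniqueness, local character, continuity) are being claimed here — only the minimal pre-frame axioms — so no use of tameness or of the local character hypothesis on splitting is needed, and the proof should be short once the definitions are unwound.
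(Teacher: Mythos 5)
Your proposal is correct and takes essentially the same approach as the paper, which dispenses with the verification in one line (``the properties to check follow directly from the definition of $(\ge\mu)$-nonforking''). Your spelled-out monotonicity check has a small unnecessary detour via $N'''$ — once you note that the witness condition in Definition~\ref{gemu-forking} quantifies only over $N'' \in K_\mu$ with $M' \le N'' \le M_1' \lea M_1$, the same $M'$ works immediately without appealing to Lemma~\ref{basic-props-nf} — but the substance is right.
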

\begin{proof}
  The properties to check follow directly from the definition of $(\ge \mu)$-nonforking. $\mathfrak{s}_0$ is type-full since we defined the basic types to be all the nonalgebraic types.
\end{proof}

In $K_\mu$ we had by definition that a type which does not $\mu$-fork over $M$ also explicitly does not $\mu$-fork over $(M_0, M)$ for some witness $M_0$. This is not necessarily the case for $(\ge \mu)$-nonforking: take for example $N \in K_{> \mu}$ and $M \in K_\mu$ and assume $p \in S (N)$ does not $(\ge \mu)$-fork over $M$. Then for all $N' \in K_\mu$ with $M \lea N' \lea N$, $p \upharpoonright N'$ does not $\mu$-fork over $M$, i.e.\ there is a witness $M_0'$ such that $p \upharpoonright N'$ explicitly does not $\mu$-fork over $(M_0', M)$, but there could be different witnesses $M_0'$ for different $N'$. 

The next lemma shows that this can be avoided if we have enough homogeneity. This is crucial to our proofs of transitivity, uniqueness, and extension.

\begin{lem}\label{unif-witness}
  Assume $M \lea N$ are both in $K_{\ge \mu^+}$ and $M$ is $\mu^+$-model-homogeneous. Assume $p \in S (N)$ does not $(\ge \mu)$-fork over $M$. Then there exists $M_0', M' \in K_\mu$ with $M_0' \lea M' \lea M$ such that $p$ explicitly does not $(\ge \mu)$-fork over $(M_0', M')$ (i.e. $(\ge \mu)$-nonforking over $M'$ is witnessed by the same $M_0'$ uniformly, see the discussion above).
\end{lem}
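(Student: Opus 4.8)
The plan is to show that the small model $\bar{M}$ handed to us by $(\ge\mu)$-nonforking of $p$ over $M$ is itself a base over which $p$ does \emph{not} $\mu$-split, and then to invoke the model-homogeneity of $M$ only at the very end, in order to slip a universal extension of $\bar{M}$ inside $M$.

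First I would unpack the hypothesis: fix $\bar{M} \in K_\mu$ with $\bar{M} \lea M$ witnessing that $p$ does not $(\ge\mu)$-fork over $M$, so that $p \rest N'$ does not $\mu$-fork over $\bar{M}$ for every $N' \in K_\mu$ with $\bar{M} \lea N' \lea N$ (such $N'$ exist since $\|N\| \ge \mu^+ > \mu$). By Definition~\ref{nonforking-lambda}, each such restriction explicitly does not $\mu$-fork over some pair $(M_0, \bar{M})$ with $M_0 \ltu \bar{M}$; since $M_0 \lea \bar{M} \lea N'$, monotonicity of splitting upgrades this to ``$p \rest N'$ does not $\mu$-split over $\bar{M}$''. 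Crucially, the (a priori $N'$-dependent) witness $M_0$ has now been absorbed into the fixed model $\bar{M}$. I would then conclude that $p$ itself does not $\mu$-split over $\bar{M}$: any witness to such splitting consists of $N_1, N_2 \in K_\mu$ with $\bar{M} \lea N_\ell \lea N$ and $h : N_1 \cong_{\bar{M}} N_2$, and choosing (by the L\"owenheim--Skolem and coherence axioms) some $N' \in K_\mu$ with $N_1 \cup N_2 \subseteq |N'|$ and $\bar{M} \lea N' \lea N$, the same $N_1, N_2, h$ witness that $p \rest N'$ $\mu$-splits over $\bar{M}$ --- contradicting the previous sentence.

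It then only remains to set $M_0' := \bar{M}$ and produce the intermediate model $M'$. Since $K_\mu$ has amalgamation, no maximal models, and is stable (Remark~\ref{rmk-good-ordering-stable}), Fact~\ref{gtu-existence} provides $M^+ \in K_\mu$ with $\bar{M} \ltu M^+$; as $\|\bar{M}\| = \mu < \mu^+$ and $\bar{M} \lea M$, the $\mu^+$-model-homogeneity of $M$ yields $\bar{M} \ltu M$, hence a $K$-embedding $f : M^+ \to M$ fixing $\bar{M}$. Put $M' := f[M^+]$. Then $M', M_0' = \bar{M} \in K_\mu$, $M_0' \lea M' \lea M$, $\bar{M} \ltu M'$ (invariance of $\ltu$ under isomorphism), and $M' \lea M \lea N$, so $p$ explicitly does not $(\ge\mu)$-fork over $(M_0', M')$, as desired.

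The part requiring the only real thought is the key step of the middle paragraph: the definition of $(\ge\mu)$-nonforking over $M$ by itself supplies, for each small $N' \lea N$, only a splitting-base that may vary with $N'$, and the point is that since every such base lies beneath the single model $\bar{M}$, monotonicity of splitting in the left argument collapses them all to $\bar{M}$; after that, the observation that any splitting of $p$ over the large model $N$ is already visible on some size-$\mu$ restriction transfers non-splitting upward to $p$. The homogeneity of $M$ enters only cosmetically, at the end.
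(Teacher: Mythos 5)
Your proof is correct and takes essentially the same route as the paper's: fix the small witness $M_0'$ furnished by the definition of $(\ge\mu)$-nonforking over $M$, use $\mu^+$-model-homogeneity to slip a universal extension $M'$ of $M_0'$ inside $M$, and read off explicit $(\ge\mu)$-nonforking over $(M_0',M')$. Where the paper's one-line argument simply cites monotonicity of $\mu$-forking, you unpack that citation by hand via monotonicity of splitting together with the observation that any $\mu$-splitting of $p$ over the large $N$ is already visible on a size-$\mu$ restriction --- which is precisely what the cited monotonicity lemma and the ``equivalently'' clause in the definition of explicit $(\ge\mu)$-nonforking encapsulate (though calling the homogeneity step ``cosmetic'' undersells it: without it there is no way to produce $M'\lea M$ with $M_0'\ltu M'$, which is the whole point of the lemma).
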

\begin{proof}
  By definition, there is $M_0'$ in $K_\mu$ with $M_0' \lea M$ such that $p$ does not $(\ge \mu)$-fork over $M_0'$. Since $M$ is $\mu^+$-model-homogeneous, one can pick $M' \gtu M_0'$ in $K_\mu$ with $M' \lea M$. By monotonicity (Lemma \ref{basic-props-nf}.(\ref{monot})), $p$ explicitly does not $(\ge \mu)$-fork over $(M_0', M')$.
\end{proof}

Using Lemma \ref{unif-witness}, we can give a simpler definition of $(\ge \mu)$-forking. This will not be used but shows that our forking is the same as that defined in \cite[Definition III.9.5.2]{shelahaecbook}.

\begin{prop}
  Assume $M \lea N$ are both in $K_{\ge \mu^+}$ and $M$ is $\mu^+$-model-homogeneous. Let $p \in S^{\text{na}} (N)$. Then $p$ does not $(\ge \mu)$-fork over $M$ if and only if there exists $M_0 \in K_\mu$ such that $M_0 \lea M$ and $p$ does not $\mu$-split over $M_0$.
\end{prop}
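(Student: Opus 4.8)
The plan is to prove the two implications separately. The left-to-right implication is essentially immediate from Lemma \ref{unif-witness}; the converse requires interposing a universal extension of the hypothesized small model inside $M$, which is where the $\mu^+$-model-homogeneity of $M$ is used, exactly in the style of the proof of Lemma \ref{unif-witness}.

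For the direction $\Rightarrow$, assume $p$ does not $(\ge \mu)$-fork over $M$. Apply Lemma \ref{unif-witness} to get $M_0', M' \in K_\mu$ with $M_0' \lea M' \lea M$ such that $p$ explicitly does not $(\ge \mu)$-fork over $(M_0', M')$. Unwinding the definition of explicit $(\ge \mu)$-forking, this says in particular that $p$ does not $\mu$-split over $M_0'$; since also $M_0' \in K_\mu$ and $M_0' \lea M' \lea M$, taking $M_0 := M_0'$ finishes this direction.

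For the direction $\Leftarrow$, assume $M_0 \in K_\mu$ with $M_0 \lea M$ and $p$ does not $\mu$-split over $M_0$. First I would produce $M' \in K_\mu$ with $M_0 \ltu M' \lea M$: by the running hypothesis of this section $K_\mu$ has amalgamation, no maximal models, and is stable, so Fact \ref{gtu-existence} gives $M'' \in K_\mu$ with $M_0 \ltu M''$; since $\|M_0\| \le \mu < \mu^+$, model-homogeneity of $M$ gives $M_0 \ltu M$, so $M''$ embeds into $M$ over $M_0$ and the image $M'$ satisfies $M_0 \ltu M' \lea M$ (closure of $\ltu$ under isomorphism over the base). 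Now let $N' \in K_\mu$ be arbitrary with $M' \lea N' \lea N$. Since $p$ does not $\mu$-split over $M_0$ and $M_0 \lea N' \lea N$, also $p \upharpoonright N'$ does not $\mu$-split over $M_0$ (a witness to splitting of $p \upharpoonright N'$ over $M_0$ is a witness to splitting of $p$ over $M_0$). Together with $M_0 \ltu M' \lea N'$ this means $p \upharpoonright N'$ explicitly does not $\mu$-fork over $(M_0, M')$ in the sense of Definition \ref{nonforking-lambda}, hence $p \upharpoonright N'$ does not $\mu$-fork over $M'$. Since $N'$ was arbitrary, $p$ does not $(\ge \mu)$-fork over $M$, witnessed by $M'$.

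I do not anticipate a genuine obstacle. The only points needing care are: (i) the interpolation of $M'$ between $M_0$ and $M$, which is the step that actually consumes $\mu^+$-model-homogeneity and mirrors the proof of Lemma \ref{unif-witness}; and (ii) the remark that monotonicity of splitting still applies although $N$ itself need not lie in $K_\mu$, since $\mu$-splitting over $M_0$ only refers to $K_\mu$-sized submodels of the domain.
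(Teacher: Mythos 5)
Your proof is correct and follows essentially the same route as the paper's: forward via Lemma \ref{unif-witness}, backward by interposing $M' \in K_\mu$ with $M_0 \ltu M' \lea M$ using $\mu^+$-model-homogeneity and then reading off explicit $(\ge\mu)$-nonforking over $(M_0,M')$. The only difference is that you spell out the interpolation step (Fact \ref{gtu-existence} plus embedding over $M_0$), which the paper's one-line proof glosses over; that extra care is harmless and, if anything, makes the $\mu^+$-model-homogeneity use more transparent.
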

\begin{proof}
  If $p$ does not $(\ge \mu)$-fork over $M$, use Lemma \ref{unif-witness} to get $M_0', M' \in K_\mu$ with $M_0 \lea M' \lea M$ such that $p$ explicitly does not $(\ge \mu)$-fork over $(M_0, M')$. By definition, this means that $p$ does not $\mu$-split over $M_0$. Conversely, assume $M_0 \in K_\mu$ is such that $M_0 \lea M$ and $p$ does not $\mu$-split over $M_0$. Since $M$ is $\mu^+$-model-homogeneous, there exists $M' \in K_\mu$ such that $M_0 \ltu M' \lea M$. Thus $p$ explicitly does not $(\ge \mu)$-fork over $(M_0, M')$, so it does not $(\ge \mu)$-fork over $M$.
\end{proof}

\begin{lem}[Existence]\label{lambdap-existence}
  Let $M \in K_{\ge \mu^+}$ be $\mu^+$-model-homogeneous. Then $p \in S^{\text{na}} (M)$ if and only if $p$ does not $(\ge \mu)$-fork over $M$.
\end{lem}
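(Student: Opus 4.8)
The plan is to prove the two implications separately. The reverse implication (if $p$ does not $(\ge\mu)$-fork over $M$ then $p\in S^{\text{na}}(M)$) is immediate: by Definition \ref{gemu-forking}, $(\ge\mu)$-nonforking is only ever asserted of nonalgebraic types (equivalently, this is the ``nonforking types are basic'' clause already recorded in Proposition \ref{s-pre-frame}). So the work is entirely in the forward direction: fix $p\in S^{\text{na}}(M)$ and produce a witness to $(\ge\mu)$-nonforking over $M$. By Definition \ref{gemu-forking} it suffices to find $M'\in K_\mu$ with $M'\lea M$ and $M_0\ltu M'$ such that $p$ does not $\mu$-split over $M_0$: then for every $N'\in K_\mu$ with $M'\lea N'\lea M$, $p\rest N'$ does not $\mu$-split over $M_0$ (any witness would lie below $M$ and so witness $\mu$-splitting of $p$ over $M_0$), and $M_0\ltu M'\lea N'$, so $p\rest N'$ explicitly does not $\mu$-fork over $(M_0,M')$ and in particular does not $\mu$-fork over $M'$.

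The heart of the argument is thus the claim that there is $M_0\lea M$ in $K_\mu$ over which $p$ does not $\mu$-split, and I would prove this by contradiction. Suppose $p$ $\mu$-splits over every $M_0\in K_\mu$ with $M_0\lea M$. Then I can build a $\lea$-increasing continuous chain $(M_i)_{i<\mu^+}$ of members of $K_\mu$ with each $M_i\lea M$, arranging at each successor step that $M_{i+1}$ contains, as $\lea$-submodels, a pair $N_1,N_2$ witnessing that $p$ $\mu$-splits over $M_i$ (this is possible because witnesses to splitting have size $\mu$ and sit below $M$, so they can be absorbed into a size-$\mu$ submodel of $M$; here one uses only that $M$ has size $\ge\mu^+$). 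Setting $M_*:=\bigcup_{i<\mu^+}M_i\lea M$, the same pairs $N_1,N_2$ show that $p\rest M_*$ $\mu$-splits over $M_i$ for every $i<\mu^+$. But $K_\mu$ is stable, so by Remark \ref{ns-local-stable} $\mu$-splitting has $\mu^+$-local character for $\lea$-increasing chains; applying this to $(M_i)_{i\le\mu^+}$ (which has $\text{cf}(\mu^+)=\mu^+$ since $\mu^+$ is regular) yields some $i^*<\mu^+$ over which $p\rest M_*$ does not $\mu$-split, a contradiction.

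Once $M_0$ is in hand, I use the $\mu^+$-model-homogeneity of $M$: since $\|M_0\|=\mu<\mu^+$ we have $M_0\ltu M$, so taking any $\ltu$-extension of $M_0$ in $K_\mu$ (which exists by Fact \ref{gtu-existence}) and embedding it into $M$ over $M_0$ produces $M'\lea M$ in $K_\mu$ with $M_0\ltu M'$ (closure of $\ltu$ under isomorphism). Combined with the reduction in the first paragraph, this finishes the proof.

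The step I expect to be the main obstacle is precisely the extraction of a single base model $M_0\lea M$ over which $p$ does not $\mu$-split: the naive approach of restricting $p$ to a saturated submodel $M'$ of size $\mu^+$ and lifting the resulting non-splitting back up to $M$ does not obviously work, because the embeddings realizing universality of $M'$ over a small model need not respect $p$. The contradiction argument above is designed to sidestep this entirely — it runs the local-character machinery against a ``bad'' chain living inside $M$, so that the non-splitting model is obtained as a submodel of $M$ from the outset; note that $\mu^+$-model-homogeneity of $M$ is then used only at the very last step, to manufacture the universal extension $M'$, and not at all in building the chain.
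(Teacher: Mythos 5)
Your proof is correct and, at the level of overall architecture, follows the same route as the paper: locate a base $M_0\lea M$ of size $\mu$ over which $p$ does not $\mu$-split, then invoke $\mu^+$-model-homogeneity to produce $M'$ with $M_0\ltu M'\lea M$, so that $p$ explicitly does not $(\ge\mu)$-fork over $(M_0,M')$. The genuine difference is in how the non-splitting base $M_0$ is obtained. The paper simply cites \cite[Fact 4.6]{tamenessone} (the same source Remark \ref{ns-local-stable} refers to) as directly furnishing such an $M_0$ for any $M\in K_{\ge\mu^+}$. You instead derive this from the chain-form of local character actually stated in Remark \ref{ns-local-stable}: assuming $p$ $\mu$-splits over every $K_\mu$-submodel of $M$, you build a continuous $\lea$-increasing chain $(M_i)_{i<\mu^+}$ inside $M$ absorbing splitting witnesses at each step, and then $\mu^+$-local character applied to $p\rest\bigcup_i M_i$ yields a contradiction. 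This is a clean and correct reduction, and arguably makes the step more self-contained than the paper's bare citation, since the statement of Remark \ref{ns-local-stable} is phrased only for chains and does not on its face apply to an arbitrary $M\in K_{\ge\mu^+}$; your argument is exactly the bridge. One small point of hygiene: at the successor step you only need $\|M\|\ge\mu$ to absorb the witnesses (they have size $\mu$ and sit inside $M$), not $\|M\|\ge\mu^+$ as you say; $\mu^+$-model-homogeneity, and hence $\|M\|\ge\mu^+$, is used only at the very end exactly as you note. The reverse implication and the reduction in your first paragraph are also handled correctly.
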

\begin{proof}
  If $p$ does not fork over $M$, then $p$ is nonalgebraic by definition. Now assume $p$ is nonalgebraic. By \cite[Fact 4.6]{tamenessone}, there is $M_0' \in K_\mu$ with $M_0' \lea M$ such that $p$ does not $\mu$-split over $M_0$. Pick $M' \in K_\mu$ with $M' \gtu M_0'$ so that $M' \lea M$. This is possible by $\mu^+$-model-homogeneity. We have that $p$ explicitly does not $(\ge \mu)$-fork over $(M_0', M')$, so does not $(\ge \mu)$-fork over $M'$, as needed.
\end{proof}

\begin{lem}[Transitivity]\label{lambdap-trans}
  If $M_0 \lea M_1 \lea M_2$ are all in $K_{\ge \mu}$, $M_1$ is $\mu^+$-model-homogeneous, $p \in S^{\text{na}} (M_2)$ is such that $p \upharpoonright M_1$ does not $(\ge \mu)$-fork over $M_0$ and $p$ does not $(\ge \mu)$-fork over $M_1$, then $p$ does not $(\ge \mu)$-fork over $M_0$. 
\end{lem}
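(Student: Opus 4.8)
The plan is to reduce everything to transitivity of $\mu$-forking over models of size $\mu$ (Lemma \ref{basic-props-nf}.(\ref{trans})), and to use the $\mu^+$-model-homogeneity of $M_1$ to manufacture a single size-$\mu$ model sitting inside $M_1$ that mediates between the nonsplitting witnesses coming from the two hypotheses. By the remarks following Definition \ref{gemu-forking}, it is enough to find $N^\ast \in K_\mu$ with $N^\ast \lea M_0$ over which $p$ does not $(\ge \mu)$-fork.

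First I would extract uniform witnesses. From ``$p$ does not $(\ge \mu)$-fork over $M_1$'' --- noting $M_1, M_2 \in K_{\ge \mu^+}$ and $M_1$ is $\mu^+$-model-homogeneous --- Lemma \ref{unif-witness} gives $N_0 \in K_\mu$ with $N_0 \lea M_1$ and $p$ not $\mu$-splitting over $N_0$; the content here is that a \emph{single} $N_0$ works, which the plain definition of $(\ge \mu)$-forking would not provide. From ``$p \upharpoonright M_1$ does not $(\ge \mu)$-fork over $M_0$'' I would unwind the definition to get $N^\ast \in K_\mu$ with $N^\ast \lea M_0$ such that $p \upharpoonright N'$ does not $\mu$-fork over $N^\ast$ whenever $N' \in K_\mu$ and $N^\ast \lea N' \lea M_1$. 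This $N^\ast$ will be the model I use.

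Next I would build a bridge inside $M_1$: take $B^- \in K_\mu$ with $N_0 \cup N^\ast \subseteq B^- \lea M_1$ (downward L\"owenheim--Skolem inside $M_1$), then take $B \in K_\mu$ with $B^- \ltu B \lea M_1$ (using $\mu^+$-model-homogeneity of $M_1$, exactly as in the proof of Lemma \ref{unif-witness}). To check nonforking over $N^\ast$, fix an arbitrary $N' \in K_\mu$ with $N^\ast \lea N' \lea M_2$, and pick $C \in K_\mu$ with $B \cup N' \subseteq C \lea M_2$. Then: (i) since $N^\ast \lea B \lea M_1$, the property of $N^\ast$ gives $D \ltu N^\ast$ with $p \upharpoonright B$ explicitly not $\mu$-forking over $(D, N^\ast)$; (ii) since $p$ does not $\mu$-split over $N_0$ and $N_0 \lea B^- \ltu B \lea C$, monotonicity of splitting gives that $p \upharpoonright C$ does not $\mu$-split over $B^-$, hence (as $B^- \ltu B \lea C$) explicitly does not $\mu$-fork over $(B^-, B)$; and (iii) $D \ltu N^\ast \lea B^-$, so $D \lea B^-$. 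Plugging (i)--(iii) into Lemma \ref{basic-props-nf}.(\ref{trans}) along the chain $N^\ast \lea B \lea C$ gives that $p \upharpoonright C$ explicitly does not $\mu$-fork over $(D, N^\ast)$; restricting to $N'$ by monotonicity (Lemma \ref{basic-props-nf}.(\ref{monot})) gives that $p \upharpoonright N'$ does not $\mu$-fork over $N^\ast$. As $N'$ was arbitrary, $p$ does not $(\ge \mu)$-fork over $N^\ast$, hence over $M_0$.

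The step I expect to be the real obstacle --- and the reason both preparatory moves are needed --- is aligning the two nonsplitting witnesses: Lemma \ref{basic-props-nf}.(\ref{trans}) requires them to be $\lea$-comparable, whereas a priori the witness $D$ (from the second hypothesis) and the witness $N_0$ (from the first) are just two unrelated size-$\mu$ submodels of $M_1$. Uniformizing via Lemma \ref{unif-witness} places $N_0$ inside $M_1$, so that the property of $N^\ast$ can be applied to a model containing it; and interposing $B^-$ below a genuine universal extension $B$ forces $D$ --- which lies below $N^\ast \lea B^-$ --- to be $\lea B^-$, so that $B^-$ itself can play the role of the ``upper'' witness in the transitivity. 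The remaining verifications are routine bookkeeping with the monotonicity properties of $\mu$-forking and of $\mu$-splitting.
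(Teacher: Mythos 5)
Your proof is correct and follows essentially the same route as the paper's: extract the size-$\mu$ witness $N^\ast \lea M_0$ from the first hypothesis, apply Lemma~\ref{unif-witness} to the second hypothesis to get a uniform nonsplitting witness inside $M_1$, use $\mu^+$-model-homogeneity to interpose a universal pair $(B^-, B)$ above both witnesses inside $M_1$, and close with transitivity in $K_\mu$ (Lemma~\ref{basic-props-nf}.(\ref{trans})) plus monotonicity. Your write-up just spells out the witness-alignment step that the paper compresses into ``using monotonicity and Lemma~\ref{unif-witness}.''
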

\begin{proof}
  Find $M_0' \in K_\mu$ with $M_0' \lea M_0$ such that $p \upharpoonright M_1$ does not $(\ge \mu)$-fork over $M_0'$. Using monotonicity and Lemma \ref{unif-witness}, we can also find $M_1', M_1'' \in K_\mu$ with $M_0' \lea M_1' \ltu M_1'' \lea M_1$ such that $p$ explicitly does not $(\ge \mu)$-fork over $(M_1', M_1'')$. By transitivity in $K_\mu$ (Lemma \ref{basic-props-nf}.(\ref{trans})), $p$ does not $(\ge \mu)$-fork over $M_0'$, and hence over $M_0$.
\end{proof}

\begin{lem}[Local character]\label{lambdap-local}
  Assume splitting has $\kappa$-local character for $\ltg$-increasing chains. If $\text{cf}(\delta) \ge \kappa$, $(M_i)_{i \le \delta}$ is an increasing chain in $K_{\ge \mu^+}$ with $M_\delta = \bigcup_{i < \delta} M_i$, $M_i$ is $\mu^+$-model-homogeneous for $i < \delta$, and $p \in S^{\text{na}} (M_\delta)$, then there is $i < \delta$ such that $p$ does not $(\ge \mu)$-fork over $M_i$.
\end{lem}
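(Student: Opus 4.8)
\textbf{Proof plan for Lemma \ref{lambdap-local} (Local character for $(\ge\mu)$-forking).}

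The plan is to reduce the statement to the local character of $\mu$-forking for $\ltg$-chains proved in Lemma \ref{props-nf}, by building, inside the given chain $(M_i)_{i\le\delta}$, a $\ltg$-increasing chain of small submodels that ``resolves'' $p$. First I would apply Proposition \ref{kappabar}-style reasoning: for each $i<\delta$ pick a resolution; more precisely, since each $M_i$ (for $i<\delta$) is $\mu^+$-model-homogeneous and $M_\delta=\bigcup_{i<\delta}M_i$, I would build an increasing continuous chain $(N_j)_{j<\operatorname{cf}(\delta)}$ in $K_\mu$ with $N_j\lea M_{i_j}$ for a suitable increasing cofinal sequence $(i_j)_{j<\operatorname{cf}(\delta)}$ in $\delta$, such that each $N_{j+1}$ is universal over $N_j$ — so that $(N_j)_j$ is in fact $\ltu$-increasing, hence $\ltg$-increasing would require a bit more care (see below). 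One can arrange this because inside the $\mu^+$-model-homogeneous $M_{i_{j+1}}$ any model of size $\mu$ containing $N_j$ has a universal extension of size $\mu$ still inside $M_{i_{j+1}}$.

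The key point: having built such a $\ltg$-increasing continuous chain $(N_j)_{j<\operatorname{cf}(\delta)}$ with union $N_\ast:=\bigcup_j N_j$ of size $\mu$, I would apply Lemma \ref{props-nf}.(1) (using that $\operatorname{cf}(\delta)\ge\kappa$ and that splitting has $\kappa$-local character for $\ltg$-increasing chains) to $p\rest N_\ast\in S(N_\ast)$ to get some $j<\operatorname{cf}(\delta)$ with $p\rest N_\ast$ explicitly not $\mu$-forking over $(N_j, N_{j+1})$; in particular $p\rest N_\ast$ does not $\mu$-split over $N_j$. Now I want to conclude $p$ does not $(\ge\mu)$-fork over $M_{i_j}$ (or over $M_{i_{j+1}}$). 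For this I need to check: for every $N'\in K_\mu$ with $N_j\lea N'\lea M_\delta$, $p\rest N'$ does not $\mu$-fork over $N_j$ — equivalently, $p\rest N'$ does not $\mu$-split over $N_j$ (once $N_j\ltu$ enough of $M_{i_j}$). Since $p$ does not $\mu$-split over $N_j$ when restricted to $N_\ast$, and $N_\ast$ is already a universal extension of $N_j$ — here I would use that any such $N'$ can be amalgamated into some larger model over $N_j$ together with $N_\ast$, and use weak uniqueness (Fact \ref{ns-uq-ext}.(1)) together with monotonicity of splitting — one deduces $p\rest N'$ does not $\mu$-split over $N_j$. This is essentially the same propagation argument as in the verification that $(\ge\mu)$-forking is well-defined; alternatively one can invoke the continuity Lemma \ref{props-nf}.(\ref{continuity}) along the chain $(N_j)$ to handle restrictions. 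Then by Definition \ref{gemu-forking}, $p$ does not $(\ge\mu)$-fork over $M_{i_j}$, and since $i_j<\delta$ this is the desired conclusion.

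\textbf{Main obstacle.} The delicate step is arranging that the small chain $(N_j)_j$ is genuinely $\ltg$-increasing (not merely $\ltu$-increasing), since Lemma \ref{props-nf} is stated for the abstract universal ordering $\ltg$, and local character of splitting is only assumed for $\ltg$-chains. One cannot in general replace $\ltu$ by $\ltg$ for free. The fix I anticipate: build the $N_j$ so that each $N_{j+1}$ is a $(\ge\kappa',\ltg)$-limit over $N_j$ inside $M_{i_{j+1}}$ — possible by $\mu^+$-model-homogeneity plus property (2) of an abstract universal ordering, iterated enough times — so that $N_j\ltg N_{j+1}$ literally holds; continuity at limits of the construction needs $M_\delta=\bigcup M_i$ and a standard back-and-forth bookkeeping to ensure cofinality in $\delta$. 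The other routine-but-necessary check is that restricting $p$ to an arbitrary $N'\in K_\mu$ above $N_j$ still does not $\mu$-split over $N_j$; this is handled exactly as in the proof that $(\ge\mu)$-forking does not depend on the choice of resolution, i.e.\ by amalgamating $N'$ with $N_\ast$ over $N_j$ and applying weak uniqueness and monotonicity of splitting.
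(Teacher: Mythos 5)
The overall scaffolding of your plan (build a $\ltg$-increasing chain of small models inside the $\mu^+$-model-homogeneous $M_{i_j}$ and apply local character of $\mu$-forking for $\ltg$-chains) is reasonable, and your worry about $\ltg$ versus $\ltu$ is correctly identified and correctly dismissed — $\mu^+$-model-homogeneity lets you put a $\ltg$-increasing chain inside the $M_{i_j}$ without trouble. But the final propagation step contains a genuine gap that I do not see how to fix along the lines you suggest.

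Here is the problem. After your construction you know that $p\rest N_\ast$ does not $\mu$-split over $N_j$, where $N_\ast = \bigcup_{j'} N_{j'}\in K_\mu$. You then want to conclude that $p\rest N'$ does not $\mu$-split over $N_j$ for \emph{every} $N'\in K_\mu$ with $N_{j+1}\lea N'\lea M_\delta$ (this is what Definition~\ref{gemu-forking} requires). You propose to get this by amalgamating $N'$ with $N_\ast$ over $N_j$ and invoking weak uniqueness and monotonicity. But weak uniqueness (Fact~\ref{ns-uq-ext}.(1)) only tells you: \emph{if} both $p\rest N'$ and the weak-extension of $p\rest N_{j+1}$ to $N'$ happen not to split over $N_j$, \emph{then} they coincide. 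It gives no way to establish the hypothesis, i.e.\ that $p\rest N'$ itself does not split. Nonsplitting of $p\rest N_\ast$ over $N_j$ is a statement purely about submodels of $N_\ast$; it carries no information about $p$ restricted to models $N'\lea M_\delta$ that are not contained in $N_\ast$. (The analogous first-order fact is that $p\rest N_\ast$ not forking over $N_j$ says nothing about whether the full type $p$ forks over $N_j$.) Amalgamating $N'$ and $N_\ast$ into $N''\in K_\mu$ does not help either, since one would need to know $p\rest N''$ does not split over $N_j$, which is exactly the kind of fact you are missing.

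The paper's proof is organized precisely to get around this. It first reduces (harmlessly) to $\delta$ regular and splits into $\delta\ge\mu^+$ (then $M_\delta$ is itself $\mu^+$-model-homogeneous and Lemma~\ref{lambdap-existence} finishes) versus $\delta<\mu^+$. In the latter case it argues \emph{by contradiction}: if $p$ $(\ge\mu)$-forks over every $M_i$, then alongside the $\ltg$-increasing $(N_i)_{i\le\delta}$ one also builds a $\lea$-increasing chain $(N_i')_{i\le\delta}$ in $K_\mu$ where $N_{i+1}'$ is an explicit \emph{witness} that $p\rest N_{i+1}'$ $\mu$-forks over $(N_i,N_{i+1})$, together with a bookkeeping condition ($\bigcup_{j\le i}(N_j'\cap M_{i+1})\subseteq |N_{i+1}|$) that forces $N_\delta'\lea N_\delta$. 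Applying $\kappa$-local character of $\mu$-forking to the $\ltg$-chain $(N_i)$ at $N_\delta$ then produces an $i$ for which $p\rest N_\delta$ does not explicitly $\mu$-fork over $(N_i,N_{i+1})$, and since $N_{i+1}'\lea N_\delta$, monotonicity contradicts the forking witness. The whole point of that bookkeeping is to ensure the would-be witnesses to splitting are absorbed into the union $N_\delta$, which is exactly the information your direct construction lacks.
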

\begin{proof}
  Without loss of generality, $\delta$ is regular. If $\delta \ge \mu^+$, then $M_\delta$ is also $\mu^+$-model-homogeneous so one can pick $N^\ast \in K_\mu$ with $N^\ast \lea M_\delta$ witnessing existence (use Lemma \ref{lambdap-existence}) and find $i < \delta$ with $N^\ast \lea M_i$, so $p$ does not $(\ge \mu)$-fork over $M_i$ as needed. Now assume $\delta < \mu^+$. We imitate the proof of \cite[Claim II.2.11.5]{shelahaecbook}. Assume the conclusion fails. Build $(N_i)_{i \le \delta}$ $\ltg$-increasing continuous in $K_\mu$, $(N_i')_{i \le \delta}$ $\lea$-increasing continuous in $K_\mu$ such that for all $i < \delta$:

\begin{enumerate}
  \item $N_i \le M_i$.
  \item \label{lcproof-ni-mdelta} $N_i \le N_i' \le M_\delta$.
  \item \label{lcproof-forkcond} $p \upharpoonright N_{i + 1}'$ explicitly $\mu$-forks over $(N_i, N_{i + 1})$.
  \item \label{lcproof-ni-contain} $\bigcup_{j \le i} (N_j' \cap M_{i + 1}) \subseteq |N_{i + 1}|$.
\end{enumerate}

\paragraph{This is possible}
For $i = 0$, let $N_0 \in K_\mu$ be any model with $N_0 \le M_0$, and let $N_0' := N_0$. For $i$ limit, take unions. For the successor case, assume $i = j + 1$. Choose $N_i \le M_i$ satisfying (\ref{lcproof-ni-contain}) with $N_i \gtg N_j$ (possible since $M_i$ is $\mu^+$-model-homogeneous). By assumption, $p$ $(\ge \mu)$-forks over $M_i$, hence explicitly $(\ge \mu)$-forks over $(N_j, N_i)$, and so by definition of forking and monotonicity there exists $N_i' \in K_\mu$ with $M_\delta \gea N_i' \gea N_i$, $N_i' \gea N_j'$, and $p \upharpoonright N_i'$ explicitly $\mu$-forking over $(N_j, N_i)$. It is as required.

\paragraph{This is enough}

By local character in $K_\mu$, there is $i < \delta$ such that $p \upharpoonright N_\delta$ explicitly does not $\mu$-fork over $(N_i, N_{i + 1})$. By (\ref{lcproof-ni-mdelta}) and (\ref{lcproof-ni-contain}), $N_\delta' \lea N_\delta$. Thus $p \upharpoonright N_{i + 1}'$ explicitly does not $\mu$-fork over $(N_i, N_{i + 1})$, contradicting (\ref{lcproof-forkcond}).
  
\end{proof}

\begin{lem}[Continuity]\label{lambdap-continuity} Assume splitting has $\kappa$-local character for $\ltg$-increasing chains. If $\text{cf}(\delta) \ge \kappa$, $(M_i)_{i \le \delta}$ is an increasing chain in $K_{\ge \mu^+}$ with $M_\delta = \bigcup_{i < \delta} M_i$, $M_i$ $\mu^+$-model-homogeneous for $i < \delta$, and $p \in S (M_\delta)$ is so that $p \upharpoonright M_i$ does not $(\ge \mu)$-fork over $M_0$ for all $i < \delta$, then $p_\delta$ does not $(\ge \mu)$-fork over $M_0$.
\end{lem}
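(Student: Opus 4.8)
The plan is to obtain continuity as an immediate consequence of the local character lemma (Lemma~\ref{lambdap-local}) together with transitivity (Lemma~\ref{lambdap-trans}), in the same way that, in the classical setting, continuity of nonforking along a chain follows once local character and transitivity are available.

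First I would record that $p$ is nonalgebraic: since $(\ge\mu)$-forking is only defined for nonalgebraic types, the hypothesis that $p \upharpoonright M_0$ does not $(\ge\mu)$-fork over $M_0$ already forces $p \upharpoonright M_0 \in S^{\text{na}}(M_0)$, whence $p \in S^{\text{na}}(M_\delta)$. With this in hand, all the hypotheses of Lemma~\ref{lambdap-local} are met: $\text{cf}(\delta) \ge \kappa$, the chain $(M_i)_{i \le \delta}$ is increasing in $K_{\ge \mu^+}$ with $M_\delta = \bigcup_{i < \delta} M_i$, and each $M_i$ with $i < \delta$ is $\mu^+$-model-homogeneous. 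Applying that lemma yields an index $i < \delta$ such that $p$ does not $(\ge\mu)$-fork over $M_i$.

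Fixing such an $i$, I would then invoke transitivity. We have $M_0 \lea M_i \lea M_\delta$, all in $K_{\ge \mu^+} \subseteq K_{\ge \mu}$, the model $M_i$ is $\mu^+$-model-homogeneous since $i < \delta$, the restriction $p \upharpoonright M_i$ does not $(\ge\mu)$-fork over $M_0$ by hypothesis, and $p$ does not $(\ge\mu)$-fork over $M_i$ by the previous step. Hence Lemma~\ref{lambdap-trans} gives that $p$ does not $(\ge\mu)$-fork over $M_0$, as required.

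There is no real obstacle here; the work has already been done in Lemmas~\ref{lambdap-local} and~\ref{lambdap-trans}. The only points that require a moment's attention are verifying nonalgebraicity of $p$ so that Lemma~\ref{lambdap-local} is literally applicable (its statement is phrased for $p \in S^{\text{na}}(M_\delta)$), and noting that the index $i$ produced by local character is strictly below $\delta$, so that $M_i$ is indeed one of the $\mu^+$-model-homogeneous models of the chain and transitivity may be applied at it.
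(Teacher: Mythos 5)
Your proof follows the paper's argument exactly: the paper's own proof is the one-line observation that continuity follows from local character and transitivity (citing Shelah's Claim II.2.17.3), which is precisely the route you take via Lemmas~\ref{lambdap-local} and~\ref{lambdap-trans}. One small imprecision in your nonalgebraicity step: nonalgebraicity of $p \upharpoonright M_0$ alone does not by itself give nonalgebraicity of $p$, since a realizer could lie in $M_\delta \setminus M_0$; the correct justification is that the hypothesis forces $p \upharpoonright M_i$ to be nonalgebraic for \emph{every} $i < \delta$, and any realizer of $p$ inside $M_\delta = \bigcup_{i<\delta} M_i$ would lie in some $M_i$, yielding a contradiction.
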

\begin{proof}
  In a type-full frame such as ours, this follows directly from $\kappa$-local character and transitivity, see \cite[Claim II.2.17.3]{shelahaecbook}.
\end{proof}
\begin{remark}
In the statements of local character and continuity, we assumed that $M_i$ was $\mu^+$-model-homogeneous for all $i < \delta$, but \emph{not} that their union $M_\delta$ was $\mu^+$-model-homogeneous.
\end{remark}

\section{A tame $\goodms{S}$ frame}\label{good-frame-without-symmetry}

Boney showed in \cite{ext-frame-jml} that given a good $\mu$-frame, tameness implies that Shelah's extension of the frame to $\ge \mu$ is actually a good $(\ge \mu)$-frame. In this section, we apply the ideas of his proof (assuming the base models are $\mu^+$-model-homogeneous) to our skeletal $\mu$-frame.

More precisely, we fix a cardinal $\lambda > \mu$, assume enough tameness, and build a $\goodms{S}$ $\lambda$-frame (i.e.\ we have all the properties of a good $\lambda$-frame except perhaps symmetry). We will prove symmetry in the next section. 

\begin{hypothesis}\label{hyp-nosym} \
  \begin{enumerate}
    \item \label{nempty-hyp} $K$ is an abstract elementary class. $\mu \ge \text{LS} (K)$ is a cardinal. $K_\mu \neq \emptyset$.
    \item \label{good-ordering-hyp} $\ltg$ is an abstract universal ordering on $K_\mu$. In particular (by Remark \ref{rmk-good-ordering-stable}), $K_\mu$ has amalgamation, no maximal models, and is stable.
    \item \label{ns-lc-hyp} $\kappa$ is the least regular cardinal such that splitting has $\kappa$-local character for $\ltg$-increasing chains in $K_\mu$.
    \item \label{amalg-lambdap} $\lambda > \mu$ is such that:
      \begin{enumerate}
        \item $K$ is $(\mu, \lambda)$-tame\footnote{Recall (Definition \ref{tameness-def}) that this means that the Galois types over models of size at most $\lambda$ are determined by their restrictions to submodels of size $\mu$.}.
        \item $K_{[\mu, \lambda]}$ has amalgamation.
        \item $K_{[\mu, \lambda)}$ has no maximal models.
      \end{enumerate}
  \end{enumerate}  
\end{hypothesis}
\begin{remark}
  $\kappa$ plays a similar role as the cardinal $\kappa(T)$ in the first-order context. By Remark \ref{ns-local-stable} and Hypothesis \ref{hyp-nosym}.(\ref{good-ordering-hyp}), $\kappa \le \mu^+$. In the end, we will be able to obtain a good frame only when $\kappa = \aleph_0$, but studying the general case leads to results on the stability spectrum.
\end{remark}

Note that uniqueness is actually \emph{equivalent} to $(\mu, \lambda)$-tameness by \cite[Theorem 3.2]{ext-frame-jml}. The easiest case is when $\lambda = \mu^+$. Then we know a model-homogeneous model exists in $K_\lambda$, and this simplifies some of the proofs.

\begin{lem}[Uniqueness]\label{lambdap-uniq}
  Let $M \lea N$ be models in $K_{[\mu, \lambda]}$. Let $p, q \in S (N)$. Assume $p \upharpoonright M = q \upharpoonright M$.

  \begin{enumerate}
    \item \label{first-uniq} If $M \in K_\mu$ and $p, q$ explicitly do not $(\ge \mu)$-fork over $(M_0, M)$ for some $M_0 \ltu M$, then $p = q$.
    \item \label{second-uniq} If $M \in K_{[\mu^+, \lambda]}$ is $\mu^+$-model-homogeneous and $p, q$ do not $(\ge \mu)$-fork over $M$, then $p = q$.
  \end{enumerate}
\end{lem}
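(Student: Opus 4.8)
The plan is to reduce both parts to the weak uniqueness property of $\mu$-splitting (Fact \ref{ns-uq-ext}.(1)) by exploiting $(\mu,\lambda)$-tameness: since Galois types over a model of size $\le\lambda$ are determined by their restrictions to submodels of size $\mu$, it suffices to find, for each part, a common small witness model $M_0 \ltu N_0 \lea N$ (with $M_0, N_0 \in K_\mu$) such that both $p$ and $q$ do not $\mu$-split over $M_0$ and such that $p \upharpoonright M_0 = q \upharpoonright M_0$; then weak uniqueness forces $p \upharpoonright N_0 = q \upharpoonright N_0$, and letting $N_0$ range over all $K_\mu$-submodels of $N$ containing the witness, tameness gives $p = q$.

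For part (\ref{first-uniq}): here $M \in K_\mu$ already, and by hypothesis $p, q$ explicitly do not $(\ge\mu)$-fork over $(M_0, M)$, so for every $N' \in K_\mu$ with $M \lea N' \lea N$, both $p \upharpoonright N'$ and $q \upharpoonright N'$ explicitly do not $\mu$-fork over $(M_0, M)$, i.e.\ do not $\mu$-split over $M_0$, and agree on $M$. Since $M_0 \ltu M \lea N'$, Fact \ref{ns-uq-ext}.(1) (weak uniqueness) applies with base $M_0$ and ambient $N'$ to give $p \upharpoonright N' = q \upharpoonright N'$. As $N'$ was an arbitrary $K_\mu$-submodel of $N$ containing $M$, and any $K_\mu$-submodel of $N$ can be enlarged to contain $M$ (using amalgamation / the fact that $M$ has size $\mu$), $(\mu,\lambda)$-tameness yields $p = q$. (If $N \in K_\mu$ this is immediate; the point of tameness is the case $\|N\|$ up to $\lambda$.)

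For part (\ref{second-uniq}): now $M$ is $\mu^+$-model-homogeneous of size in $[\mu^+,\lambda]$, and $p, q$ do not $(\ge\mu)$-fork over $M$. Apply Lemma \ref{unif-witness} to $p$ to get $M_0^p \lea M_1^p \lea M$ in $K_\mu$ with $M_0^p \ltu M_1^p$ and $p$ explicitly not $(\ge\mu)$-forking over $(M_0^p, M_1^p)$; similarly for $q$ get $(M_0^q, M_1^q)$. Using $\mu^+$-model-homogeneity of $M$, amalgamate these four models inside a single $M_\ast \in K_\mu$ with $M_\ast \lea M$, and then (again by model-homogeneity) choose $M_0 \lea M_\ast$ in $K_\mu$ with $M_0 \gtu M_0^p$ and $M_0 \gtu M_0^q$ — concretely, pick $M_0 \in K_\mu$ with $M_0^p \cup M_0^q \subseteq |M_0| \lea M$ and $M_0 \gtu M_0^p$, then by Proposition \ref{ltl-basic-props} also $M_0 \gtu M_0^q$ provided $M_0^q \lea M_0$; if this containment is awkward one instead takes a chain of two universal extensions. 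By monotonicity of $\mu$-forking (Lemma \ref{basic-props-nf}.(\ref{monot})), both $p$ and $q$ explicitly do not $(\ge\mu)$-fork over $(M_0, M_0')$ for a suitable $M_0 \ltu M_0' \lea M$, hence neither $\mu$-splits over $M_0$; and since $M_0 \lea M$ and $p \upharpoonright M = q \upharpoonright M$, we get $p \upharpoonright M_0 = q \upharpoonright M_0$. Now run the same weak-uniqueness-plus-tameness argument as in part (\ref{first-uniq}), with base $M_0$ and with $N'$ ranging over $K_\mu$-submodels of $N$ containing $M_0'$.

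The main obstacle is the bookkeeping in part (\ref{second-uniq}): the witnesses $M_0^p$ and $M_0^q$ coming from Lemma \ref{unif-witness} are a priori unrelated, and one must use $\mu^+$-model-homogeneity of $M$ carefully to absorb both into a single base $M_0$ with $M_0 \ltu (\text{something}) \lea M$ while preserving the non-splitting of \emph{both} types — this relies on the fact (Lemma \ref{basic-props-nf}.(\ref{monot}), ultimately monotonicity of splitting) that enlarging the base model below keeps non-splitting, together with $M$ being universal over each size-$\mu$ submodel. Beyond that, the only subtlety is checking that tameness genuinely applies, i.e.\ that distinct Galois types over $N$ (of size $\le \lambda$) restrict differently to some size-$\mu$ submodel — which is exactly Hypothesis \ref{hyp-nosym}.(\ref{amalg-lambdap})(a) — and that every size-$\mu$ submodel of $N$ over which we need agreement can be taken to contain the relevant witness model, which is routine given amalgamation in $K_{[\mu,\lambda]}$.
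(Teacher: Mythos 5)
Your proof is correct and follows essentially the same route as the paper: reduce to weak uniqueness of splitting (Fact \ref{ns-uq-ext}) over size-$\mu$ submodels and then invoke $(\mu,\lambda)$-tameness, using $\mu^+$-model-homogeneity of $M$ in part (\ref{second-uniq}) to produce a single pair $(M_0,M_0')$ with $M_0\ltu M_0'\lea M$ witnessing explicit non-forking for both $p$ and $q$. One small imprecision: you invoke Lemma \ref{basic-props-nf}.(\ref{monot}), but its hypotheses require the new witness to sit between $M_0^p$ and $M_1^p$ (resp.\ $M_0^q$ and $M_1^q$), which can fail once $M_0$ is forced to contain both $M_0^p$ and $M_0^q$; the correct and sufficient tool is plain monotonicity of $\mu$-splitting (if $p$ does not $\mu$-split over $M_0^p$ and $M_0^p\lea M_0$, then $p$ does not $\mu$-split over $M_0$), which you do acknowledge in your closing remark as the ``ultimate'' source, so the argument as a whole goes through.
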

\begin{proof}
  (\ref{first-uniq}) follows from uniqueness in $K_\mu$ (Lemma \ref{basic-props-nf}.(\ref{uniq})) and tameness. To see (\ref{second-uniq}), use monotonicity and Lemma \ref{unif-witness}, to find $M_0', M' \in K_\mu$ with $M_0' \ltu M' \lea M$ such that both $p$ and $q$ explicitly do not $(\ge \mu)$-fork over $(M_0', M')$. Now apply (\ref{first-uniq}).
\end{proof}

Interestingly, we already have enough machinery to obtain a stability transfer theorem. First recall:

\begin{fact}\label{lambdap-stability-2}
  $K_{\mu^+}$ is stable.
\end{fact}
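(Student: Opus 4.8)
The plan is to deduce stability in $\mu^+$ from stability in $\mu$ together with the skeletal frame machinery already developed. The key point is that local character of $\mu$-splitting (and hence of $\mu$-forking) lets us bound the number of Galois types over a model $N \in K_{\mu^+}$ by the number of ``small data'' determining each type: a choice of submodel $M_0 \in K_\mu$ and a $\mu$-nonsplitting type over $M_0$.

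First I would fix $N \in K_{\mu^+}$ and, using Fact \ref{gtu-existence} and amalgamation, take a $\ltu$-increasing continuous resolution $(M_i)_{i < \mu^+}$ of $N$ with each $M_i \in K_\mu$; by Remark \ref{ns-local-stable}, $\mu$-splitting has $\mu^+$-local character for $\lea$-increasing chains, so for every $p \in S(N)$ there is $i < \mu^+$ with $p \rest M_{i+1}$ not $\mu$-splitting over $M_i$, i.e.\ $p$ explicitly does not $\mu$-fork over $(M_i, M_{i+1})$ (here I use that $M_i \ltu M_{i+1}$). Then I would invoke extension and uniqueness of $\mu$-forking (Lemma \ref{basic-props-nf}.(\ref{extension}) and Lemma \ref{basic-props-nf}.(\ref{uniq})): $p$ is the unique extension to $N$ of $p \rest M_{i+1}$ that explicitly does not $\mu$-fork over $(M_i, M_{i+1})$, and in turn $p \rest M_{i+1}$ is determined by the pair $(M_i, p \rest M_i)$ via weak extension/uniqueness. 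So the map $p \mapsto (i, p \rest M_i)$ is injective on the set of types that do not $\mu$-fork over some $M_i$ — which by the above is all of $S(N)$.

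Counting: there are $\mu^+$ choices for $i$, and for each $i$, since $K_\mu$ is stable, $|S(M_i)| \le \mu$. Hence $|S(N)| \le \mu^+ \cdot \mu = \mu^+$, giving stability in $\mu^+$. (Alternatively, and perhaps more cleanly, one cites the known fact that stability in $\mu$ plus amalgamation and $\mu^+$-local character of splitting yields stability in $\mu^+$ — this is essentially \cite[Theorem 4.13]{tamenessone} or \cite[Corollary 6.4]{sh394} — but I would prefer to spell out the short argument above since all the needed pieces are already in hand.)

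The main obstacle is purely bookkeeping: making sure that the witness model $M_i$ genuinely determines the type, i.e.\ that one really has $p \rest M_{i+1}$ not $\mu$-splitting over $M_i$ with $M_i \ltu M_{i+1}$ (so that Fact \ref{ns-uq-ext} applies) and that distinct types restrict distinctly to the relevant $M_i$. The resolution being $\ltu$-increasing is what supplies the ``room'' needed; continuity at limits and the regularity of $\mu^+$ are what make the local character clause applicable. No genuinely hard step is involved — the content is that the skeletal frame already encodes enough of forking to run the classical stability-transfer counting argument.
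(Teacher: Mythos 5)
Your plan is essentially the alternative the paper itself acknowledges (``the method of proof of Theorem \ref{lambdap-stability}''); the paper's actual proof simply cites \cite[Theorem 1]{b-k-vd-spectrum}, which is stronger in that it does not use tameness. Since Section~\ref{good-frame-without-symmetry} does assume $(\mu,\lambda)$-tameness, your route is legitimate in context, but there are two places where the write-up needs repair. First, an arbitrary $N \in K_{\mu^+}$ need not admit a $\ltu$-increasing continuous resolution by models of size $\mu$: that requires $N$ to be $\mu^+$-model-homogeneous. The fix is standard but must be stated: by Fact~\ref{gtu-existence} there is a saturated $\widehat N \in K_{\mu^+}$ extending some $M_0 \lea N$ of size $\mu$; embed $N$ into $\widehat N$ over $M_0$ using amalgamation and the universality of the pieces of $\widehat N$, and then count $|S(\widehat N)|$ instead, noting $|S(N)| \le |S(\widehat N)|$. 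Second, the uniqueness you invoke is over a model of size $\mu^+$, so the correct reference is Lemma~\ref{lambdap-uniq}.(\ref{first-uniq}) (which is precisely where tameness enters), not Lemma~\ref{basic-props-nf}.(\ref{uniq}), whose scope is confined to $K_\mu$. Relatedly, the intermediate assertion that $p \rest M_{i+1}$ is determined by $(M_i, p \rest M_i)$ does not follow from Fact~\ref{ns-uq-ext}: weak uniqueness pins down a nonsplitting type via its restriction to the \emph{larger} model $M_{i+1}$, not the smaller $M_i$. What is true, and all you need, is that $p$ is determined by the pair $(i, p \rest M_{i+1})$, which still yields $|S(\widehat N)| \le \mu^+ \cdot \mu = \mu^+$. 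With those adjustments the argument is correct and matches the paper's sketched alternative; the citation the paper actually uses avoids the tameness hypothesis, which is why it is preferred there.
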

\begin{proof}
  This could be done using the method of proof of Theorem \ref{lambdap-stability}, but this is also \cite[Theorem 1]{b-k-vd-spectrum}.
\end{proof}

Recall that $\kappa$ is the local character cardinal, see Hypothesis \ref{hyp-nosym}.(\ref{ns-lc-hyp}.

\begin{lem}\label{lambdap-stability-technical}
  Assume that $\lambda > \mu^+$, $\text{cf} (\lambda) \ge \kappa$, and there are unboundedly (in the same sense as in the statement of Lemma \ref{univ-embed}) many $\mu \le \lambda' < \lambda$ such that $K_{\lambda'}$ is stable. Then $K_\lambda$ is stable.  
\end{lem}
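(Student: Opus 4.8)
The plan is to count Galois types over a model $M \in K_\lambda$ by pushing the analysis down to the cardinals $\lambda' < \lambda$ where stability is already known, using the $(\ge\mu)$-forking machinery of the previous section together with tameness. First I would fix $M \in K_\lambda$ and a $p \in S(M)$. By Lemma \ref{univ-embed} (whose hypotheses are satisfied: $K_{[\mu,\lambda)}$ has amalgamation, no maximal models, and is stable unboundedly below $\lambda$ by assumption, and $\mu^+ < \lambda$), we may assume $M$ is the union of a $\ltu$-increasing continuous chain $(N_i)_{i < \lambda}$ with each $N_{i+1}$ $\mu^+$-model-homogeneous; moreover every model of size $\le \lambda$ extending $N_0$ embeds into $M$ over $N_0$, so it suffices to bound $|S(M)|$ for this particular $M$.

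Next, since $\text{cf}(\lambda) \ge \kappa$, I would apply the local character lemma (Lemma \ref{lambdap-local}) to the chain $(N_i)_{i<\lambda}$: for each $p \in S^{\text{na}}(M)$ there is some $i = i(p) < \lambda$ with $p$ not $(\ge\mu)$-forking over $N_{i+1}$ (one must be slightly careful with the indexing so that the base model is $\mu^+$-model-homogeneous, but the $N_{i+1}$ are, so replacing $i$ by $i+1$ handles this; algebraic types are counted trivially since $M$ realizes at most $\|M\| = \lambda$ of them). By uniqueness (Lemma \ref{lambdap-uniq}(\ref{second-uniq})), $p$ is determined by $i(p)$ together with $p \upharpoonright N_{i+1}$. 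Hence
$$
|S^{\text{na}}(M)| \le \lambda + \sum_{i < \lambda} |S(N_{i+1})|.
$$
Now I would use that $\|N_{i+1}\| < \lambda$, so $\|N_{i+1}\| = \lambda_i$ for some $\mu \le \lambda_i < \lambda$; we do not know $K_{\lambda_i}$ is stable directly, but by the unboundedness hypothesis we can, in the construction of the $N_i$ in Lemma \ref{univ-embed}, have arranged each $N_{i+1}$ to live in a stable cardinal (indeed that lemma builds $N_{i+1} \in K_{\|N_i'\|}$ where $K_{\|N_i'\|}$ is chosen stable). So $|S(N_{i+1})| \le \|N_{i+1}\| < \lambda$ for each $i$, giving $|S^{\text{na}}(M)| \le \lambda \cdot \lambda = \lambda$, i.e.\ $K_\lambda$ is stable.

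The main obstacle I expect is bookkeeping rather than conceptual: one must ensure the resolution $(N_i)_{i<\lambda}$ of the "master model" $M$ simultaneously (a) has each relevant $N_{i+1}$ in a \emph{stable} cardinal so that $|S(N_{i+1})| \le \|N_{i+1}\|$, (b) has each $N_{i+1}$ being $\mu^+$-model-homogeneous so that Lemmas \ref{lambdap-local} and \ref{lambdap-uniq} apply, and (c) is $\ltu$-increasing continuous of length $\lambda$ with $\text{cf}(\lambda) \ge \kappa$ so that local character yields a single index $i(p)$. All three are delivered by Lemma \ref{univ-embed} and Lemma \ref{lambdap-local} essentially off the shelf, but the argument that it suffices to bound $|S(M)|$ for \emph{one} well-chosen $M$ — rather than for every model of size $\lambda$ — relies on the "moreover" clause of Lemma \ref{univ-embed} (universal embedding into $N$), and on the standard fact that $|S(M')| \le |S(M)|$ whenever $M'$ embeds into $M$ over a common submodel (restriction of types along the embedding is injective on $S(M')$). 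I would state that last reduction explicitly. Everything else is cardinal arithmetic of the form $\lambda \cdot \lambda = \lambda$.
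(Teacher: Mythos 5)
Your proposal is correct and follows essentially the same route as the paper: reduce to the special model $\widehat{M}$ built via Lemma \ref{univ-embed}, use $\kappa$-local character (Lemma \ref{lambdap-local}) to attach to each nonalgebraic type an index $i$ so that the type does not $(\ge\mu)$-fork over $N_{i+1}$, then use uniqueness (Lemma \ref{lambdap-uniq}) plus stability of $K_{\|N_{i+1}\|}$ to bound the number of types at each index. The paper phrases the count as a double pigeonhole on $\lambda^+$ hypothetical types rather than your direct summation $\lambda + \sum_{i<\lambda}|S(N_{i+1})| \le \lambda$, but this is purely cosmetic.
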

\begin{proof}
  Let $M \in K_\lambda$. By Lemma \ref{univ-embed}, $M$ can be embedded inside some $\widehat{M} \in K_\lambda$ which can be written as $\bigcup_{i < \lambda} M_i$, with $(M_i)_{i < \lambda}$ an increasing chain\footnote{Explicitly, we take $(N_i)_{i < \lambda}$ as given by Lemma \ref{univ-embed} for some $N_0 \lea M$ in $K_{\mu^+}$, and let $M_i := N_{i + 1}$. Note that the chain $(M_i)_{i < \lambda}$ will \emph{not} be continuous.} of $\mu^+$-model-homogeneous models in $K_{[\mu^+, \lambda)}$. From amalgamation, we know that Galois types can be extended, so $|S(M)| \le |S(\widehat{M})|$, and so we can assume without loss of generality that $M = \widehat{M}$. Let $(p_j)_{j < \lambda^+}$ be types in $S (M)$. By $\kappa$-local character, for each $j < \lambda^+$ there is $i_j < \lambda$ such that $p_j$ does not $(\ge \mu)$-fork over $M_{i_j}$. By the pigeonhole principle, we may assume $i_j = i_0$ for all $j < \lambda^+$. Taking $i_0$ bigger if necessary, we may assume that $K_{\|M_{i_0}\|}$ is stable. Thus $|S (M_{i_0})| \le \|M_{i_0}\| \le \lambda$, so by the pigeonhole principle again, we can assume that there is $q \in S (M_{i_0})$ such that $p_j \upharpoonright M_{i_0} = q$ for all $j < \lambda^+$. By uniqueness, $p_j = p_{j'}$ for each $j, j' < \lambda^+$, so the result follows.
\end{proof}

We can now prove that stability transfers up if the locality cardinal $\kappa$ of Hypothesis \ref{hyp-nosym}.(\ref{ns-lc-hyp}) is $\aleph_0$. Recall that $\lambda$ is the cardinal above $\mu$ fixed in Hypothesis \ref{hyp-nosym}.(\ref{amalg-lambdap}). Recall also that we already have stability in $\mu$ by Hypothesis \ref{hyp-nosym}.(\ref{good-ordering-hyp}).

\begin{thm}[The superstability theorem]\label{lambdap-stability}
  If $\kappa = \aleph_0$, then $K_{\lambda}$ is stable.
\end{thm}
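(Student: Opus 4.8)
The plan is to prove that $K_\lambda$ is stable by induction on $\lambda$ (among cardinals $\ge \mu$ satisfying Hypothesis \ref{hyp-nosym}.(\ref{amalg-lambdap})), using the two lemmas already established. The base cases are already handled: stability in $\mu$ is Hypothesis \ref{hyp-nosym}.(\ref{good-ordering-hyp}), and stability in $\mu^+$ is Fact \ref{lambdap-stability-2}. So assume $\lambda > \mu^+$ and that $K_{\lambda'}$ is stable for all $\mu \le \lambda' < \lambda$ (in particular, stability holds unboundedly below $\lambda$). Since $\kappa = \aleph_0$, the hypothesis $\text{cf}(\lambda) \ge \kappa$ of Lemma \ref{lambdap-stability-technical} is automatic. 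Hence Lemma \ref{lambdap-stability-technical} applies directly and yields that $K_\lambda$ is stable.

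The one subtlety is making sure the induction is well-founded and that the hypotheses needed to invoke the earlier lemmas actually propagate. Concretely, Lemma \ref{univ-embed} (used inside Lemma \ref{lambdap-stability-technical}) requires $K_{[\mu, \lambda)}$ to have amalgamation, no maximal models, and stability in unboundedly many cardinals below $\lambda$; amalgamation and no maximal models below $\lambda$ follow from Hypothesis \ref{hyp-nosym}.(\ref{amalg-lambdap}) together with the general transfer facts for intervals (and from $K_\mu$ having no maximal models by the abstract universal ordering), and the stability-unboundedly-below part is exactly the induction hypothesis. One should also note that $(\mu, \lambda)$-tameness and $(\mu, \lambda')$-tameness for $\mu < \lambda' < \lambda$ are inherited from $(\mu,\lambda)$-tameness by monotonicity of the definition, so the frame machinery (uniqueness, local character, continuity from Section \ref{good-frame-without-symmetry} and Section \ref{going-up-sec}) is available at every intermediate cardinal that the proof of Lemma \ref{lambdap-stability-technical} touches.

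Thus the whole argument is: verify the interval hypotheses propagate, then run the induction with base cases $\mu$ and $\mu^+$ and inductive step supplied by Lemma \ref{lambdap-stability-technical}. \textbf{The main obstacle} is essentially bookkeeping rather than mathematics: confirming that at each cardinal $\lambda' \in [\mu, \lambda)$ the full package of Hypothesis \ref{hyp-nosym} (amalgamation on the relevant interval, no maximal models, tameness) remains in force so that Lemma \ref{lambdap-stability-technical} — and through it Lemma \ref{univ-embed} and the uniqueness of $(\ge\mu)$-forking — can legitimately be invoked. Once that is in place, the result is immediate from the $\kappa = \aleph_0$ assumption, since that removes the cofinality restriction in Lemma \ref{lambdap-stability-technical}.
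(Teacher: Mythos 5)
Your proof is correct and follows essentially the same route as the paper: induction on $\lambda$ with base case $\mu^+$ handled by Fact \ref{lambdap-stability-2} and the successor/limit step supplied by Lemma \ref{lambdap-stability-technical}, where $\kappa = \aleph_0$ trivializes the cofinality requirement. The extra bookkeeping you flag (propagation of amalgamation, no maximal models, and tameness on subintervals) is a reasonable thing to check, but the paper treats it as implicit; your argument and the paper's are the same.
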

\begin{proof}
  We work by induction on $\lambda$. If $\lambda = \mu^+$, this is Fact \ref{lambdap-stability-2} and if $\lambda > \mu^+$ this is given by Lemma \ref{lambdap-stability-technical} and the induction hypothesis.
\end{proof}

Assuming the generalized continuum hypothesis (GCH), we can also say something for arbitrary $\kappa$ (this will not be used):

\begin{thm}\label{main-thm-spectrum}
  Assume GCH. If $\lambda^{<\kappa} = \lambda$, then $K_\lambda$ is stable.
\end{thm}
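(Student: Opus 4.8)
The plan is to split into two cases according to whether $\lambda$ is a successor or a limit cardinal; GCH will be used in only two ways. First, by K\"onig's theorem the hypothesis $\lambda^{<\kappa}=\lambda$ already forces $\text{cf}(\lambda)\geq\kappa$ (otherwise $\lambda^{\text{cf}(\lambda)}>\lambda$ would be one of the factors). Second, under GCH one has $\lambda^\mu=\lambda$ whenever $\mu<\text{cf}(\lambda)$ — in particular whenever $\lambda$ is regular, hence always when $\lambda$ is a successor cardinal. Throughout I would use freely that $K_\mu$ has amalgamation, no maximal models and is stable (Hypothesis \ref{hyp-nosym}.(\ref{good-ordering-hyp}) and Remark \ref{rmk-good-ordering-stable}), and that $(\mu,\lambda')$-tameness and amalgamation in $K_{[\mu,\lambda']}$ are inherited for every $\lambda'\leq\lambda$.

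\textbf{Case 1 ($\lambda$ a successor cardinal).} This includes $\lambda=\mu^+$, where stability is also Fact \ref{lambdap-stability-2}. Fix $N\in K_\lambda$; it suffices to bound $|S^{\text{na}}(N)|$, there being at most $\|N\|=\lambda$ algebraic types over $N$. I would first reduce to the case that $N$ is $\mu^+$-model-homogeneous: under GCH one can build a $\mu^+$-model-homogeneous $\widehat{N}\geq N$ in $K_\lambda$ by the usual union-of-a-continuous-chain construction — at each step the number of Galois types over the $\leq\mu$-sized submodels of the current model that must be realized is at most $\lambda^\mu\cdot\mu=\lambda$ (this is where regularity of $\lambda$ and GCH enter), and amalgamation lets one realize them inside a model of size $\lambda$ — and since Galois types extend along $\leq$ (amalgamation), $|S^{\text{na}}(N)|\leq|S^{\text{na}}(\widehat{N})|$. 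So assume $N$ is $\mu^+$-model-homogeneous. For each $p\in S^{\text{na}}(N)$, by \cite[Fact 4.6]{tamenessone} — applied exactly as in the proof of Lemma \ref{lambdap-existence} — there is $M_0^p\in K_\mu$ with $M_0^p\leq N$ such that $p$ does not $\mu$-split over $M_0^p$; pick $M_0^p$ least in a fixed well-ordering of the $\mu$-sized submodels of $N$. Using $\mu^+$-model-homogeneity of $N$ together with Fact \ref{gtu-existence}, fix once and for all, for each $M_0\in K_\mu$ with $M_0\leq N$, a model $M(M_0)\in K_\mu$ with $M_0\ltu M(M_0)\leq N$. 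Then $p$ explicitly does not $(\geq\mu)$-fork over $(M_0^p,M(M_0^p))$, so by Lemma \ref{lambdap-uniq}.(\ref{first-uniq}) the type $p$ is determined by the pair $(M_0^p, p\upharpoonright M(M_0^p))$. There are at most $\|N\|^\mu=\lambda^\mu=\lambda$ choices for the $\mu$-sized submodel $M_0^p$, and for each at most $\mu$ choices for $p\upharpoonright M(M_0^p)\in S(M(M_0^p))$ by stability in $\mu$. Hence $|S^{\text{na}}(N)|\leq\lambda$.

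\textbf{Case 2 ($\lambda$ a limit cardinal).} Then $\text{cf}(\lambda)\geq\kappa$ by the opening remark, and in particular $\lambda>\mu^+$. The successor cardinals $\rho$ with $\mu^+\leq\rho<\lambda$ are cofinal in $\lambda$, and each $K_\rho$ is stable by Case 1 (whose hypotheses hold since $\rho\leq\lambda$); so $K_{\lambda'}$ is stable for unboundedly many $\mu\leq\lambda'<\lambda$, and all the hypotheses of Lemma \ref{lambdap-stability-technical} are satisfied. That lemma yields stability in $\lambda$.

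The step I expect to be the main obstacle is the successor case $\lambda=\sigma^+$ with $\text{cf}(\sigma)\leq\mu$: here $K_\sigma$ itself need not be stable (paralleling the first-order picture, where a stable non-superstable theory fails to be stable in $\aleph_\omega$), and there is no $\mu^+$-model-homogeneous model of size $\sigma$, so one cannot resolve $N$ into a chain of $\mu^+$-model-homogeneous models of size $<\lambda$ and imitate the proof of Lemma \ref{lambdap-stability-technical}. What saves the argument is precisely that $\lambda=\sigma^+$ is \emph{regular}: a $\mu^+$-model-homogeneous model of that size does exist under GCH, and over such a model \cite[Fact 4.6]{tamenessone} directly furnishes a $\mu$-sized non-splitting base for every type, after which weak uniqueness together with tameness (Lemma \ref{lambdap-uniq}) and the counting $\lambda^\mu=\lambda$ close the argument with no reference to $K_\sigma$. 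Everything else — the reduction via extension of Galois types, the bookkeeping in the model-homogeneous construction, and the double pigeonhole buried in Lemma \ref{lambdap-stability-technical} for the limit case — is routine.
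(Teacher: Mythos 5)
Your proof is correct and has the same overall architecture as the paper's: split on whether $\lambda$ is a successor or a limit, handle the limit case by finding stable successor cardinals cofinal in $\lambda$ and applying Lemma \ref{lambdap-stability-technical}, and note via K\"onig that $\lambda^{<\kappa}=\lambda$ forces $\text{cf}(\lambda)\geq\kappa$. The one real difference is in the successor case: the paper simply observes that GCH gives $\lambda^\mu=\lambda$ and then cites the Grossberg--VanDieren stability transfer \cite[Corollary 6.4]{tamenessone} as a black box, whereas you re-derive that transfer inside the paper's own machinery (build a $\mu^+$-model-homogeneous extension using $\lambda^\mu=\lambda$, use \cite[Fact 4.6]{tamenessone} to extract a $\mu$-sized non-splitting base $M_0^p$, fix a universal $M(M_0^p)$ inside $N$, and invoke Lemma \ref{lambdap-uniq}.(\ref{first-uniq}) so that $p$ is determined by the pair $(M_0^p,\,p\upharpoonright M(M_0^p))$, giving the count $\lambda\cdot\mu=\lambda$). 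This is mathematically the same idea as the cited corollary, so the two proofs are conceptually equivalent; yours is self-contained where the paper's is more terse, and yours folds the $\lambda=\mu^+$ case into the general successor case rather than delegating it to Fact \ref{lambdap-stability-2}. Your closing diagnosis of where the difficulty lies — $\lambda=\sigma^+$ with $\text{cf}(\sigma)\leq\mu$, where $K_\sigma$ need not be stable and one cannot resolve $N$ into $\mu^+$-model-homogeneous models of size $<\lambda$ — correctly explains why the successor case cannot be subsumed into the chain argument of Lemma \ref{lambdap-stability-technical} and genuinely needs the $\lambda^\mu=\lambda$ counting.
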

\begin{proof}
  By induction on $\lambda$. If $\lambda = \mu^+$, this is Fact \ref{lambdap-stability-2}, so assume $\lambda > \mu^+$. By König's theorem, $\text{cf} (\lambda) \ge \kappa$. If $\lambda$ is successor, then $\lambda^\mu = \lambda$ by GCH, so by \cite[Corollary 6.4]{tamenessone}, $K$ is stable in $\lambda$. If $\lambda$ is limit there exists a sequence of successor cardinals $(\lambda_i)_{i < \text{cf} (\lambda)}$ increasing cofinal in $\lambda$ with $\lambda_0 \ge \mu^+$. Since without loss of generality $\kappa \le \mu^+$ (Remark \ref{ns-local-stable}), GCH implies that $\lambda_i^{<\kappa} = \lambda_i$, so by the induction hypothesis, $K$ is stable in $\lambda_i$ for all $i < \text{cf} (\lambda)$. Apply Lemma \ref{lambdap-stability-technical} to conclude.
\end{proof}

We now prove extension. This follows from compactness in the first-order case, but we make crucial use of the superstability hypothesis $\kappa = \aleph_0$ in the general case (recall from the hypotheses of this section that $\kappa$ is the local character cardinal for $\mu$-splitting).

\begin{lem}\label{ext-nf-types}
  Assume $\kappa = \aleph_0$. Let $\delta < \lambda^+$ be a limit ordinal. Assume $(M_i)_{i \le \delta}$ is an increasing continuous sequence in $K_{[\mu, \lambda)}$ with $M_0 \in K_\mu$. Let $(p_i)_{i < \delta}$ be an increasing continuous sequence of types with $p_i \in S (M_i)$ for all $i < \delta$, and $p_i$ explicitly does not $(\ge \mu)$-fork over $(M_0', M_0)$. Assume that one of the following holds:

    \begin{enumerate}
      \item \label{first-case-ext-nf-types} $(M_i)_{i < \delta}$ is $\ltg$-increasing in $K_\mu$.
      \item For all $i < \delta$, $M_{i + 1}$ is $\mu^+$-model-homogeneous.
    \end{enumerate}

    Then there exists a unique $p_\delta \in S (M_\delta)$ extending each $p_i$ and explicitly not $(\ge \mu)$-forking over $(M_0', M_0)$.
\end{lem}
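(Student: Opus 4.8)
The plan is to build $p_\delta$ as the union of the $p_i$ and then verify it is a well-defined type over $M_\delta$ that explicitly does not $(\ge \mu)$-fork over $(M_0', M_0)$, with uniqueness coming from Lemma \ref{lambdap-uniq}. First I would reduce to the case $\delta = \text{cf}(\delta)$: a standard argument lets us replace $(M_i)_{i \le \delta}$ by a cofinal continuous subchain, since the $p_i$ are increasing and continuous. Then I would set $p_\delta := \bigcup_{i < \delta} p_i$; the content is that this is actually a Galois type over $M_\delta$ (i.e.\ it is realized in a single model extending $M_\delta$), which is where the two cases and the superstability hypothesis $\kappa = \aleph_0$ enter. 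In case (\ref{first-case-ext-nf-types}), $(M_i)_{i < \delta}$ is $\ltg$-increasing, so it is in particular a $\ltu$-increasing chain in $K_\mu$ with $M_\delta$ its union; I would use Fact \ref{ns-uq-ext}(2) (weak extension of nonsplitting) successively together with the compatibility of the $p_i$ to amalgamate the witnessing realizations into one model over $M_\delta$, or more cleanly, directly cite Lemma \ref{props-nf}.(\ref{continuity}) ($\kappa$-continuity for $\ltg$-increasing chains) once we know $p_\delta$ is a type. In case (2), each $M_{i+1}$ is $\mu^+$-model-homogeneous, so I would instead invoke Lemma \ref{lambdap-continuity} (continuity for $\ge \mu$-forking over $\mu^+$-model-homogeneous chains); again the real obstacle is first getting $p_\delta$ to be a genuine type.

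To see that $p_\delta$ is a type, the key point is that all the $p_i$ are witnessed by the \emph{same} small pair $(M_0', M_0)$: each $p_i$ does not $\mu$-split over $M_0'$, and $M_0' \ltu M_0$. So I would pick realizations $a_i \models p_i$ inside models $N_i \gea M_i$, and — using amalgamation over $M_i$ (which is available since $K_{[\mu,\lambda]}$ has amalgamation and $M_\delta \in K_{[\mu,\lambda]}$ as $\delta < \lambda^+$ and each $M_i \in K_{[\mu,\lambda)}$) together with the weak uniqueness of nonsplitting from Fact \ref{ns-uq-ext}(1) — glue the $a_i$'s into a coherent directed system whose direct limit gives a single model $N_\delta \gea M_\delta$ containing a common realization $a$ of all the $p_i$. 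Concretely: build an increasing continuous chain $(N_i)_{i \le \delta}$ with $a \in N_0$, $M_i \lea N_i$, $a \models p_i$ in $N_i$, using weak uniqueness of nonsplitting at successor steps (both $p_{i+1}$ and $\text{tp}(a/M_{i+1};N_{i+1})$ do not split over $M_0'$ and agree on $M_0$, hence on $M_i$ since $p_{i+1} \rest M_i = p_i$, forcing them equal) and continuity/directed-limit at limits. Then $p_\delta := \text{tp}(a / M_\delta; N_\delta)$ is the desired type.

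Finally I would check $p_\delta$ explicitly does not $(\ge \mu)$-fork over $(M_0', M_0)$: by definition this means $p_\delta$ does not $\mu$-split over $M_0'$ and $M_0' \ltu M_0 \lea M_\delta$. The latter is immediate. For the former, by the characterization of $(\ge \mu)$-nonsplitting/forking it suffices to check that for every $N' \in K_\mu$ with $M_0 \lea N' \lea M_\delta$, $p_\delta \rest N'$ does not $\mu$-split over $M_0'$; but any such $N'$ is contained in some $M_i$ (by continuity of the chain, using $\text{cf}(\delta) = \delta$ or directedness), so $p_\delta \rest N' = p_i \rest N'$ which does not split over $M_0'$ by monotonicity of splitting and the hypothesis on $p_i$. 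Uniqueness of $p_\delta$ follows from Lemma \ref{lambdap-uniq}.(\ref{first-uniq}): any two such extensions explicitly do not $(\ge \mu)$-fork over $(M_0', M_0)$ with $M_0 \in K_\mu$ and agree on $M_0$ (both restrict to $p_0 \rest M_0$), hence are equal.

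\textbf{Main obstacle.} The delicate step is the construction of the common realization $a$ of all the $p_i$ — i.e.\ showing $\bigcup_i p_i$ is a Galois type rather than just a coherent family of types. This is exactly where the uniform witness $(M_0', M_0)$ is essential (so that weak uniqueness of nonsplitting, Fact \ref{ns-uq-ext}(1), can be applied at every successor step to identify the new realization with the old one), and where one must be slightly careful that the chain $(M_i)$ stays inside $K_{[\mu,\lambda)}$ with union in $K_{[\mu,\lambda]}$ so that amalgamation and tameness remain available.
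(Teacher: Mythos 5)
Your high-level plan matches the paper: build a directed system of witnessing models/elements, pass to a direct limit to produce $p_\delta$, verify nonforking via the continuity lemma (Lemma \ref{props-nf}.(\ref{continuity}) in case (1), Lemma \ref{lambdap-continuity} in case (2)), and get uniqueness from Lemma \ref{lambdap-uniq}. That is exactly the skeleton of the paper's proof, which models the argument on \cite[Corollary 2.22]{tamenessthree}.

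However, your "concretely" refinement has a genuine gap. You propose to build an $\le$-increasing chain $(N_i)_{i \le \delta}$ with a single fixed element $a \in N_0$ satisfying $M_i \lea N_i$ and $\text{tp}(a/M_i;N_i) = p_i$. This cannot be done: at the successor step, having $N_i$ with $a\models p_i$ and a witness $(N',b)$ of $p_{i+1}$, the equality $\text{tp}(a/M_i;N_i)=\text{tp}(b/M_i;N')$ gives an amalgam in which you may fix $N_i$ (hence $a$) and embed $N'$ over $M_i$, or fix $N'$ (hence $M_{i+1}$) and embed $N_i$, but not both -- so you cannot simultaneously keep $a$ fixed, keep $M_{i+1}\lea N_{i+1}$ literally, and control the type. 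Your fallback appeal to weak uniqueness (Fact \ref{ns-uq-ext}(1)) also does not close this gap, since after an arbitrary amalgamation it is not known that $\text{tp}(a/M_{i+1};N_{i+1})$ fails to $\mu$-split over $M_0'$, which is a hypothesis of weak uniqueness. The paper sidesteps all of this by letting the realization vary: it builds $(N_i, a_i, f_{i,j})$ with commuting embeddings $f_{i,j}:N_i\to N_j$ over $M_i$ sending $a_i\mapsto a_j$, using only the definition of Galois type equality at successors, invoking continuity and uniqueness only at limit stages of the recursion and for the final $p_\delta$. A smaller point: your direct check that $p_\delta$ does not $\mu$-split over $M_0'$ by restricting to $N'\in K_\mu$ contained in some $M_i$ does not go through in case (1) (where $M_\delta\in K_\mu$, so $N'$ can be $M_\delta$ itself) nor when $\text{cf}(\delta)\le\mu$; this is precisely why the paper verifies explicit nonforking via the continuity lemma rather than by hand.
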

\begin{proof}
  This is similar to the argument in \cite[Corollary 2.22]{tamenessthree}, but we give some details. We focus on (\ref{first-case-ext-nf-types}) (the proof of the other case is completely similar). Build by induction $(f_{i, j})_{i < j < \delta}$,  $(a_i)_{i < \delta}$, and increasing continuous $(N_i)_{i < \delta}$ such that for all $i < j < \delta$:

  \begin{enumerate}
    \item $M_i \lea N_i$, $a_i \in N_i$.
    \item $f_{i, j}: N_i \rightarrow N_j$.
    \item For $j < k < \delta$, $f_{j, k} \circ f_{i, j} = f_{i, k}$.
    \item $f_{i, j}$ fixes $M_i$.
    \item $f_{i, j} (a_i) = a_j$.
    \item $p_i = \text{tp} (a_i / M_i; N_i)$.
  \end{enumerate}

  \paragraph{This is enough} Let $(N_\delta, (f_{i, \delta}))_{i < \delta}$ be the direct limit of the system $(N_i, f_{i, j})_{i < j < \delta}$, and let $a_\delta := f_{0, \delta} (a_0)$, $p_\delta := \text{tp} (a_\delta / M_\delta; N_\delta)$. One easily checks that $p_\delta$ extends each $p_i$, $i < \delta$, and so using continuity for $\ltg$-increasing chains (Lemma \ref{props-nf}.(\ref{continuity})), explicitly does not $(\ge \mu)$-fork over $(M_0', M_0)$. Finally, $p_\delta$ is unique by Lemma \ref{lambdap-uniq}.

  \paragraph{This is possible} For $i = 0$, we take $a_0$ and $N_0$ so that $\text{tp} (a_0 / M_0 ; N_0) = p_0$. For $i$ limit, we let $(N_i, f_{i_0, i})_{i_0 < i}$ be the direct limit of the system $(N_{i_0}, f_{i_0, j_0})_{i_0 < j_0 < i}$, and let $a_i := f_{0, i} (a_0)$. By continuity for $\ltg$-increasing chains, $\text{tp} (a_i / M_i; N_i)$ explicitly does not $(\ge \mu)$-fork over $(M_0', M_0)$, and so by uniqueness, it must equal $p_i$. For $i = i_0 + 1$ successor, find $a_i$ and $N_i' \ge M_i$ such that $p_i = \text{tp} (a_i / M_i; N_i')$. Since $p_i \upharpoonright M_{i_0} = p_{i_0}$, we can use the definition of types to amalgamate $N_{i_0}$ and $N_i'$ over $M_{i_0}$: there exists $N_i \gea N_i'$ and $f_{i_0, i}: N_{i_0} \xrightarrow[M_{i_0}]{} N_i$ so that $f_{i_0, i} (a_{i_0}) = a_i$. Define $f_{i_0', i} := f_{i_0, i} \circ f_{i_0', i_0}$ for all $i_0' < i_0$.
\end{proof}

\begin{lem}[Extension]\label{lambdap-ext}
  Assume $\kappa = \aleph_0$. Let $M \lea N$ both be in $K_{[\mu^+, \lambda]}$ with $M$ and $N$ $\mu^+$-model-homogeneous, and let $p \in S^{\text{na}} (M)$. Then there is $q \in S (N)$ extending $p$ that does not fork over $M$.
\end{lem}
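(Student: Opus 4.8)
The plan is to reduce the statement, via the uniform-witness machinery, to a ``splitting'' assertion about a type over a small model, and then to build the extension by transfinite recursion along a resolution of $N$.

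First I would apply Lemma~\ref{lambdap-existence} to get that $p$ does not $(\ge \mu)$-fork over $M$, and then Lemma~\ref{unif-witness} to obtain $M_0', M_0 \in K_\mu$ with $M_0' \ltu M_0 \lea M$ such that $p$ explicitly does not $(\ge \mu)$-fork over $(M_0', M_0)$; equivalently, $p$ does not $\mu$-split over $M_0'$. I claim it then suffices to find some $q \in S (N)$ that does not $\mu$-split over $M_0'$ and satisfies $q \upharpoonright M_0 = p \upharpoonright M_0$. Indeed, for such a $q$ both $q \upharpoonright M$ and $p$ are types in $S (M)$ not $\mu$-splitting over $M_0'$ that agree on $M_0$, so weak uniqueness (Fact~\ref{ns-uq-ext}, using $M_0' \ltu M_0 \lea M$) forces $q \upharpoonright M = p$; and since $M_0' \ltu M_0 \lea N$ and $q$ does not $\mu$-split over $M_0'$, $q$ explicitly does not $(\ge \mu)$-fork over $(M_0', M_0)$, whence $q$ does not $(\ge \mu)$-fork over $M$ (directly from the definitions, since $M_0 \lea M$). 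Thus the problem becomes to extend the type $p \upharpoonright M_0$, which lives over the \emph{small} model $M_0$, to a type over $N$ not $\mu$-splitting over $M_0'$.

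To build this extension I would resolve $N$ and recurse. If $\|N\| = \mu^+$, write $N = \bigcup_{i < \mu^+} N_i$ with $(N_i)_{i < \mu^+}$ a $\ltg$-increasing continuous chain in $K_\mu$ satisfying $M_0 \lea N_0$ (possible as $N$ is $\mu^+$-model-homogeneous; note $M_0' \ltu N_0$ by Proposition~\ref{ltl-basic-props}); take $q_0 \in S (N_0)$ to be a weak extension of $p \upharpoonright M_0$ not $\mu$-splitting over $M_0'$ (Fact~\ref{ns-uq-ext}), pass from $q_i$ to $q_{i+1}$ by extension in $K_\mu$ (Lemma~\ref{basic-props-nf}), and at limit stages take the union type given by the first case of Lemma~\ref{ext-nf-types}; the direct-limit construction from the proof of Lemma~\ref{ext-nf-types} then assembles the $q_i$ into the desired $q \in S (N)$. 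If $\|N\| > \mu^+$ I would argue by induction on $\|N\|$: resolve $N = \bigcup_{i \le \sigma} N_i$ with $N_0 \in K_\mu$ containing $M_0$ and each $N_{i+1}$ ($i < \sigma$) $\mu^+$-model-homogeneous of size $< \|N\|$ (a routine Löwenheim--Skolem construction performed inside the $\mu^+$-model-homogeneous model $N$); at a successor step the induction hypothesis applied to the pair $N_i \lea N_{i+1}$ yields some nonforking extension $q_{i+1}$ of $q_i$, and one verifies that $q_{i+1}$ does not $\mu$-split over $M_0'$ by combining monotonicity of splitting, Lemma~\ref{unif-witness}, and weak uniqueness (Fact~\ref{ns-uq-ext}); at limit stages one applies the second case of Lemma~\ref{ext-nf-types} (the chain still beginning at the size-$\mu$ model $N_0$), together with the same direct-limit step at the top.

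The main obstacle is precisely the limit/gluing step: going from an increasing chain of types that do not $\mu$-fork over a fixed small model to a single such type over the union. In the first-order setting this is free by compactness; here it is exactly the point at which the superstability hypothesis $\kappa = \aleph_0$ is used (through $\kappa$-continuity, Lemma~\ref{props-nf}), and it is encapsulated in Lemma~\ref{ext-nf-types}. The only other slightly delicate point is the bookkeeping in the resolutions of $N$ --- in particular arranging each successor model to be $\mu^+$-model-homogeneous and of strictly smaller size when $\|N\| > \mu^+$ --- which is routine.
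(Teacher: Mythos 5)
Your overall plan is the same as the paper's: fix $M_0' \ltu M_0 \lea M$ witnessing that $p$ explicitly does not $(\ge \mu)$-fork over $(M_0', M_0)$, reduce to extending $p \upharpoonright M_0$ to a type over $N$ that does not $\mu$-split over $M_0'$, and build that extension by transfinite recursion along a resolution of $N$, using $\kappa = \aleph_0$ (via Lemma~\ref{ext-nf-types}) at limit stages. Your reduction step is in fact a bit cleaner than the paper's, which interleaves a second chain $(M_i)_{i \le \lambda}$ resolving $M$ so as to preserve agreement with $p$ along the way; you instead get agreement with $p$ ``for free'' at the end via uniqueness. However, the uniqueness you invoke there is \emph{not} an instance of Fact~\ref{ns-uq-ext}: that fact requires $M_0, M, N \in K_\mu$, and here you need to conclude $q \upharpoonright M = p$ for the original $M \in K_{\ge \mu^+}$. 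The correct reference is Lemma~\ref{lambdap-uniq}.(\ref{first-uniq}), whose proof is exactly ``tameness plus Fact~\ref{ns-uq-ext}''. Since tameness is one of the standing hypotheses making this whole section go through, this is not merely a label error: without it your reduction does not close.

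There is also a genuine imprecision in your successor step for the $\|N\| > \mu^+$ case, which you dismiss as ``routine bookkeeping''. You apply the induction hypothesis to $N_i \lea N_{i+1}$, but the lemma (and hence the induction hypothesis) requires \emph{both} models to be $\mu^+$-model-homogeneous and in $K_{\ge \mu^+}$. In your resolution, $N_0 \in K_\mu$ and, more importantly, $N_i$ for limit $i$ of cofinality $< \mu^+$ is a union of $\mu^+$-model-homogeneous models and need not itself be $\mu^+$-model-homogeneous; so the induction hypothesis does not apply at those steps, and the promised ``verification via monotonicity, Lemma~\ref{unif-witness}, and weak uniqueness'' that $q_{i+1}$ does not $\mu$-split over $M_0'$ also breaks, since Lemmas~\ref{unif-witness} and \ref{lambdap-trans} require the intermediate model to be $\mu^+$-model-homogeneous. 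The fix, which is in spirit what the paper's proof does, is to keep the fixed pair $(M_0', M_0)$ in play throughout: at each successor step extend from $M_0$ (or produce an extension explicitly not $(\ge \mu)$-forking over $(M_0', M_0)$) rather than from $N_i$, and then use Lemma~\ref{lambdap-uniq}.(\ref{first-uniq}) to see that this extension agrees with $q_i$. With those two repairs --- invoking tameness via Lemma~\ref{lambdap-uniq} in place of Fact~\ref{ns-uq-ext}, and routing the successor step through $(M_0', M_0)$ --- your argument becomes correct and essentially coincides with the paper's.
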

\begin{proof}
  We imitate the proof of \cite[Theorem 5.3]{ext-frame-jml}. By existence and Lemma \ref{unif-witness}, there exists $M_0', M_0 \in K_\mu$ with $M_0' \ltu M_0 \lea M$ and $p$ explicitly $(\ge \mu)$-nonforking over $(M_0', M_0)$. Work by induction on $\lambda$. If $N \in K_{<\lambda}$, use the induction hypothesis, so assume $N \in K_\lambda$. There are two cases: either $\lambda = \mu^+$ or $\lambda > \mu^+$. 

  Assume first $\lambda > \mu^+$. By transitivity and Lemma \ref{univ-embed}, we can assume without loss of generality that $N = \bigcup_{i < \lambda} N_i$, where $(N_i)_{i \le \lambda}$ is a $\ltu$-increasing continuous chain in $K_{[\mu^+, \lambda)}$, each $N_{i + 1}$ is $\mu^+$-model-homogeneous, and $N_0$ extends $M_0$. Now inductively build a $\le$-increasing continuous $(M_i)_{i \le \lambda}$ with $M_{\lambda} = M$ so that $M_0 \lea M_i \lea N_i$ for all $i < \lambda$ (we allow repetitions). Set $p_i := p \upharpoonright M_i$ and note that by monotonicity, $p_i$ explicitly does not $(\ge \mu)$-fork over $(M_0', M_0)$.

  We inductively build an increasing $(q_i)_{i \le \lambda}$ with $q_i \in S (N_i)$, $p_i \lea q_i$, and $q_i$ explicitly does not $(\ge \mu)$-fork over $(M_0', M_0)$. For $i = 0$, use extension in $K_{<\lambda}$ to find $q_0$ as needed. For $i = j + 1$, use extension to find a $(\ge \mu)$-nonforking extension $q_i \in S (N_i)$ of $q_j$ that explicitly does not $(\ge \mu)$-fork over $(M_0', M_0)$. By uniqueness, $q_i \gea p_i$. At limits, use Lemma \ref{ext-nf-types} and uniqueness. $q := q_\lambda$ is as desired.

  If $\lambda = \mu^+$, the construction is exactly the same except we use extension in $K_\lambda$ at successor steps and the first case of Lemma \ref{ext-nf-types} at limit steps. Note that since $N$ is $\mu^+$-model-homogeneous, $N = \bigcup_{i < \mu^+} N_i$, where $(N_i)_{i < \mu^+}$ is a $\ltg$-increasing continuous chain in $K_\mu$.
\end{proof}

\begin{mydef}
  Let $\mathfrak{s} := \mathfrak{s}_0 \upharpoonright \lambda$, where $\mathfrak{s}_0$ is the pre-frame from Proposition \ref{s-pre-frame}.
\end{mydef}

\begin{cor}\label{s-good-frame-nosym}
  Assume:

  \begin{enumerate}
    \item $\kappa = \aleph_0$.
    \item $K_\mu$ has joint embedding.
    \item $K_\lambda$ has no maximal models.
    \item All the models in $K_\lambda$ are $\mu^+$-model-homogeneous.
  \end{enumerate}
  
  Then $\mathfrak{s}$ is a type-full $\goodms{S}$ $\lambda$-frame.
\end{cor}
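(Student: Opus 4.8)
The plan is to verify each clause in Definition~\ref{good-frame-def} for $\mathfrak{s} = \mathfrak{s}_0 \rest \lambda$ in turn, reading off almost all of them from the lemmas already established under Hypothesis~\ref{hyp-nosym}. First, $\mathfrak{s}$ is a pre-$\lambda$-frame by Proposition~\ref{s-pre-frame} (restricted to $\lambda$), and it is type-full by construction. For amalgamation, joint embedding, and no maximal models in $K_\lambda$: amalgamation is part of Hypothesis~\ref{hyp-nosym}.(\ref{amalg-lambdap}), no maximal models in $K_\lambda$ is an explicit assumption, and joint embedding follows from Lemma~\ref{jep-from-amalgamation} applied to the interval $[\mu,\lambda]$, using that $K_\mu$ has joint embedding (assumed) and $K_{[\mu,\lambda]}$ has amalgamation. (One should check the hypotheses of Lemma~\ref{jep-from-amalgamation} are met: the interval $[\mu, \lambda]$ has minimum $\mu$ and $K_\mu$ has joint embedding, so $K_{[\mu,\lambda]}$, hence $K_\lambda$, has joint embedding.)

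For the forking axioms: density of basic types is immediate since in a type-full frame the basic types are all nonalgebraic types, and $M \lta N$ in $K_\lambda$ always produces some $a \in N \setminus M$ with $\text{tp}(a/M;N)$ nonalgebraic. Existence is Lemma~\ref{lambdap-existence} — here we use that all models in $K_\lambda$ are $\mu^+$-model-homogeneous, so the hypothesis of that lemma is automatically satisfied by every $M \in K_\lambda$. Uniqueness is Lemma~\ref{lambdap-uniq}.(\ref{second-uniq}), again using $\mu^+$-model-homogeneity of all models in $K_\lambda$. Transitivity is Lemma~\ref{lambdap-trans} (the middle model $M_1 \in K_\lambda$ is $\mu^+$-model-homogeneous). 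Extension is Lemma~\ref{lambdap-ext}, which requires $\kappa = \aleph_0$ (assumed) and $\mu^+$-model-homogeneity of both models — again automatic in $K_\lambda$. Local character is Lemma~\ref{lambdap-local} and continuity is Lemma~\ref{lambdap-continuity}: both require $\kappa = \aleph_0$ so that every limit ordinal $\delta$ has $\text{cf}(\delta) \ge \kappa$, and both need the models $M_i$ ($i < \delta$) to be $\mu^+$-model-homogeneous, which holds since they all lie in $K_\lambda$.

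The one remaining clause is bs-stability: $|S^{\text{bs}}(M)| = |S^{\text{na}}(M)| \le \|M\| = \lambda$ for all $M \in K_\lambda$. Since $S^{\text{na}}(M) \subseteq S(M)$, this reduces to $K_\lambda$ being stable, i.e.\ $|S(M)| \le \lambda$ for all $M \in K_\lambda$. This is exactly Theorem~\ref{lambdap-stability}, the superstability theorem, which holds because $\kappa = \aleph_0$ is assumed. The main point to be careful about is simply that every hypothesis needed by the various lemmas is discharged by the four assumptions of the corollary — in particular that ``all models in $K_\lambda$ are $\mu^+$-model-homogeneous'' is what lets us drop the homogeneity side conditions appearing throughout Section~\ref{good-frame-without-symmetry}, and $\kappa = \aleph_0$ is what makes the local character, continuity, and extension lemmas apply to arbitrary limit ordinals. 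No step is a genuine obstacle; the work is organizational, and I expect the write-up to be a short checklist citing the appropriate earlier result for each axiom. Symmetry is deliberately excluded (that is the content of the $\goodms{S}$ notation and will be handled in Section~\ref{getting-sym}).
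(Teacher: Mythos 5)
Your proposal matches the paper's proof almost verbatim: both are a checklist discharging each clause of Definition~\ref{good-frame-def} via Lemmas~\ref{lambdap-existence}, \ref{lambdap-trans}, \ref{lambdap-local}, \ref{lambdap-continuity}, \ref{lambdap-uniq}, \ref{lambdap-ext}, and Theorem~\ref{lambdap-stability}, with joint embedding from Lemma~\ref{jep-from-amalgamation}. The one small point the paper flags and you gloss over is that Lemma~\ref{lambdap-ext} has a different form than the extension clause of Definition~\ref{good-frame-def} (it extends a type over the small model $M$, whereas the axiom starts from a nonforking $p \in S(N)$ and extends to $N' \gea N$), so one must first extend using Lemma~\ref{lambdap-ext} with base $N$ and then apply transitivity (Lemma~\ref{lambdap-trans}) to bring the nonforking base back down to $M$; this is not a gap in your understanding, just a step worth stating explicitly.
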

\begin{proof}
  It is easy to see $\mathfrak{s}$ is a type-full pre-$\lambda$-frame. $K_\lambda$ has amalgamation and no maximal models by hypothesis. It has joint embedding since $K_\mu$ has joint embedding and $K_{[\mu, \lambda]}$ has amalgamation (see Lemma \ref{jep-from-amalgamation}). Stability holds by Theorem \ref{lambdap-stability}. Density of basic types is always true in a type-full frame. For the other properties, see Lemmas \ref{lambdap-existence}, \ref{lambdap-trans}, \ref{lambdap-local}, \ref{lambdap-continuity}, \ref{lambdap-uniq}, and \ref{lambdap-ext} (note that the original statement of extension in Definition \ref{good-frame-def} follows from Lemma \ref{lambdap-ext} and transitivity).
\end{proof}

\begin{lem}\label{categ-conseq}
  Assume $K$ is categorical in $\lambda$ and $\kappa = \aleph_0$. Then:

  \begin{enumerate}
    \item \label{categ-conseq-1} $K_{[\mu, \lambda]}$ has joint embedding and $K_\lambda$ (and hence $K_{[\mu, \lambda]}$) has no maximal models.
    \item \label{categ-conseq-2} All the models in $K_\lambda$ are $\mu^+$-model-homogeneous.
  \end{enumerate}
\end{lem}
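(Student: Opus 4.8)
The plan is to read off both parts from categoricity in $\lambda$, from the stability in $\lambda$ already established in Theorem \ref{lambdap-stability} (the one place where $\kappa = \aleph_0$ is used), and from the basic theory of universal and limit extensions from Section \ref{prelim}. Besides Hypothesis \ref{hyp-nosym} I will use that $K$ has a model of cardinality $\lambda^+$; this holds under the standing ``no maximal models'' assumption that governs the applications of this lemma, and some such input is genuinely needed for the no-maximal-models conclusion --- the AEC of all sets of cardinality $\le \lambda$ under inclusion satisfies all of Hypothesis \ref{hyp-nosym} and is categorical in $\lambda$, yet has a maximal model in $K_\lambda$.

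For part (\ref{categ-conseq-1}), I first note that every $M \in K_{[\mu, \lambda)}$ extends to a model of cardinality exactly $\lambda$: build a $\lta$-increasing continuous chain over $M$ using that $K_{[\mu, \lambda)}$ has no maximal models, observing that each stage below $\lambda$ has cardinality $< \lambda$ (both $\mu$ and each ordinal index are $< \lambda$), so the union at stage $\lambda$ has cardinality $\lambda$. Joint embedding for $K_{[\mu, \lambda]}$ follows: given $M_1, M_2$ there, extend each to $N_\ell \in K_\lambda$, use categoricity to obtain $f \colon N_1 \cong N_2$, and note that $M_1$ embeds into $N_2$ via $f \upharpoonright M_1$ while $M_2 \lea N_2$. (Restricting to size-$\mu$ submodels of $N_2$ gives joint embedding in $K_\mu$, hence in $K_{[\mu, \lambda]}$ via Lemma \ref{jep-from-amalgamation}, if one prefers.) For no maximal models in $K_\lambda$, fix $N^+ \in K$ of cardinality $\lambda^+$, and use the L\"owenheim--Skolem axiom to find $N' \lea N^+$ of cardinality $\lambda$ and then $N'' \lea N^+$ of cardinality $\lambda$ containing $N'$ together with one more point of $N^+$, so that $N' \lta N''$; by categoricity every model of $K_\lambda$ is isomorphic to $N'$ and hence not maximal in $K_\lambda$. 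Together with no maximal models in $K_{[\mu, \lambda)}$ this gives no maximal models in $K_{[\mu, \lambda]}$.

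For part (\ref{categ-conseq-2}) it suffices, by categoricity, to exhibit one $\mu^+$-model-homogeneous model of cardinality $\lambda$. By part (\ref{categ-conseq-1}), $K_\lambda$ has amalgamation and no maximal models, and by Theorem \ref{lambdap-stability} it is stable, so Proposition \ref{ltl-basic-props}.(4) (with $\lambda$, $\mu^+$ in place of its $\mu$, $\delta$; note $\mu^+ < \lambda^+$) produces a $\ltu$-increasing continuous chain $(M_i)_{i \le \mu^+}$ in $K_\lambda$; put $M^* := M_{\mu^+} = \bigcup_{i < \mu^+} M_i$. I claim $M^*$ is $\mu^+$-model-homogeneous. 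Let $A \lea M^*$ with $\|A\| = \mu$ (the case $\|A\| < \mu$ reduces to this in the standard way). Regularity of $\mu^+$ gives $i < \mu^+$ with $A \subseteq M_i$, hence $A \lea M_i$ by coherence. Given $B \gea A$ with $\|B\| = \mu$, amalgamate $B$ with $M_i$ over $A$: after renaming, $M_i \lea M_i' \in K_\lambda$ and there is an embedding $g \colon B \to M_i'$ fixing $A$; since $M_i \ltu M_{i+1}$ and $\|M_i'\| = \lambda$, $M_i'$ embeds into $M_{i+1}$ over $M_i$, and composing yields an embedding of $B$ into $M_{i+1} \lea M^*$ fixing $A$. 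So $A \ltu M^*$, and by categoricity every model of $K_\lambda$ is $\mu^+$-model-homogeneous.

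The step with real content is the no-maximal-models claim for $K_\lambda$, which cannot follow from Hypothesis \ref{hyp-nosym} alone (first paragraph) but needs a model past $\lambda$; the rest is routine bookkeeping --- cardinal arithmetic at limit stages, the reduction to $\|A\| = \mu$, and checking that the amalgams stay in $K_{[\mu, \lambda]}$. One may also prove part (\ref{categ-conseq-2}) without using part (\ref{categ-conseq-1}) when $\text{cf}(\lambda) > \mu$: take $N = \bigcup_{i < \lambda} N_i$ as provided by Lemma \ref{univ-embed} (whose hypotheses hold because, applying Theorem \ref{lambdap-stability} with any $\mu' \in (\mu, \lambda)$ in place of $\lambda$ --- its hypotheses are monotone and $\kappa$ is unchanged --- $K$ is stable cofinally below $\lambda$), note each $N_{i+1}$ is $\mu^+$-model-homogeneous, and observe that any submodel of $N$ of cardinality $\le \mu$ sits inside a single $N_i$ since $\text{cf}(\lambda) > \mu$, so that $A \ltu N_{i+1} \lea N$.
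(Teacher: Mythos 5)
Your worry about the no-maximal-models clause is a reasonable instinct to have, but both the counterexample and the fix are wrong, and this is where the real content of the lemma lies.

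First, the proposed counterexample is not an AEC. The collection of all sets of cardinality $\le \lambda$ under inclusion fails the union-of-chains axiom: a strictly increasing chain $(M_i)_{i < \lambda^+}$ of sets of size $\lambda$ has union of size $\lambda^+$, which is not in the class. (Moreover, even setting that aside, such a class has \emph{no} maximal model in $K_\lambda$, since any set of size $\lambda$ properly extends to another set of size $\lambda$.) So the counterexample does not show that the extra hypothesis is needed, and in fact the lemma is provable from Hypothesis \ref{hyp-nosym} together with categoricity in $\lambda$ and $\kappa = \aleph_0$, with no assumption that $K$ has a model past $\lambda$.

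Second, and this is the gap: the paper derives ``$K_\lambda$ has no maximal models'' rather than assuming it, and this is the nontrivial step you have bypassed. The paper's proof order is the reverse of yours: it first proves part (\ref{categ-conseq-2}) \emph{unconditionally}, by a case split --- if $K_\lambda$ has no maximal models, build a $\ltu$-increasing continuous chain of length $\mu^+$ in $K_\lambda$ (exactly as you do); if $K_\lambda$ does have a maximal model, then amalgamation forces that maximal model to be $\mu^+$-model-homogeneous. With (\ref{categ-conseq-2}) in hand, part (\ref{categ-conseq-1}) is proved by constructing, inside a given $N \in K_\lambda$, a proper $\lea$-submodel of size $\lambda$: pick a $\ltg$-limit $M_0 \lea N$ of size $\mu$ and an element $a \in N \setminus M_0$, then (using extension, nonalgebraicity, continuity of $\mu$-forking, and saturation of $N$, which is (\ref{categ-conseq-2})) build a $\ltg$-increasing continuous chain $(M_i)_{i \le \mu^+}$ in $K_\mu$ inside $N$ with $\text{tp}(a/M_i; N)$ not $\mu$-forking over $M_0$ at each stage, so $a \notin M_{\mu^+}$ and $M_{\mu^+} \lta N$ with $M_{\mu^+} \in K_\lambda$. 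Categoricity in $\lambda$ then gives $N \cong M_{\mu^+} \lta N$, and the standard isomorphism-lifting argument (extend the inverse of the embedding $N \cong M_{\mu^+} \lea N$ to a strict $\lea$-extension of $N$) shows $N$ is not maximal. (The case $\lambda > \mu^+$ is similar, with a chain in $K_{[\mu^+,\lambda)}$.) So your proof is correct under the additional hypothesis, but the extra hypothesis is neither needed nor available, and what is missing is precisely this internal construction of a proper size-$\lambda$ submodel using the $\mu$-forking machinery.
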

\begin{proof}
  To see (\ref{categ-conseq-2}), assume first that $K_\lambda$ has no maximal models. Use stability to build $(M_i)_{i \le \mu^+}$ $\ltu$-increasing continuous with $M_i \in K_{\lambda}$ for all $i < \mu^+$. Then $M_{\mu^+}$ is $\mu^+$-model-homogeneous. If $K_\lambda$ has a maximal model, then it is easy to check directly that the maximal model is $\mu^+$-model-homogeneous.

    For (\ref{categ-conseq-1}), $K_\lambda$ has joint embedding by categoricity. Now since $K_{[\mu, \lambda)}$ has no maximal models, any $M \in K_{[\mu, \lambda)}$ embeds into an element of $K_\lambda$, so joint embedding for $K_{[\mu, \lambda]}$ follows . To see $K_\lambda$ has no maximal model, let $N \in K_\lambda$ be given. First assume $\lambda = \mu^+$. Build a $\ltg$-increasing continuous chain $(M_i)_{i \le \mu^+}$, and $a \in N$ such that for all $i < \mu^+$:

  \begin{enumerate}
    \item $M_i \in K_\mu$, $M_i \lea N$.
    \item $a \notin M_0$.
    \item $\text{tp} (a / M_i; N)$ does not $\mu$-fork over $M_0$.
  \end{enumerate}

  \paragraph{This is enough} $M_{\mu^+} \in K_{\lambda^+}$. Moreover by Lemma \ref{basic-props-nf}.(\ref{nonalgebraicity}), $a \notin M_i$ for all $i < \mu^+$, so $a \notin M_{\mu^+}$. Thus $M_{\mu^+} \lta N$. By categoricity, the result follows.

  \paragraph{This is possible} Pick a $\ltg$-limit $M_0 \in K_\mu$ with $M_0 \lea N$ (this is possible by model-homogeneity of $N$), and pick any $a \in N \backslash M_0$. At limits, take unions and use continuity (Lemma \ref{props-nf}.(\ref{continuity})) to see the requirements are maintained. For a successor $i = j + 1$, use extension and some renaming. In details, pick an arbitrary $M_i' \gtg M_j$ with $M_i' \lea N$ (possible by model-homogeneity). By extension (Lemma \ref{basic-props-nf}.(\ref{extension})), there is $q \in S (M_i')$ that does not $\mu$-fork over $M_0$ and extends $p_j := \text{tp} (a / M_j; N)$.  Since $N$ is saturated, there is $a' \in N$ realizing $q$. Pick $N \gea N_i \gea M_i'$ containing $a'$ and $a$. By assumption, $\text{tp} (a' / M_j; N_i) = p_j = \text{tp} (a / M_j; N_i)$. Thus there is $N_i' \gea N_i$ and $f: N_i \xrightarrow[M_j]{} N_i'$ such that $f (a') = a$ and without loss of generality $N_i' \lea N$. Let $M_i := f[M_i']$ and use invariance to see it is as desired.

  If $\lambda > \mu^+$, the proof is completely similar: if there is $N_1 \gta N$, we are done, so assume not. Then amalgamation implies $N$ must be model-homogeneous. Build a $\ltu$-increasing continuous $(M_i)_{i \le \lambda}$ and $a \in N$ such that for all $i < \lambda$:

  \begin{enumerate}
    \item $M_i \in K_{[\mu^+, \lambda)}$, $M_i \lea N$.
    \item $M_{i + 1}$ is $\mu^+$-model-homogeneous.
    \item $\text{tp} (a / M_i; N)$ does not $(\ge \mu)$-fork over $M_0$.
  \end{enumerate}

  As before, this is possible and the result follows.
\end{proof}

\begin{cor}
  If $K$ is categorical in $\lambda$ and $\kappa = \aleph_0$, then $\mathfrak{s}$ is a type-full $\goodms{S}$ $\lambda$-frame.
\end{cor}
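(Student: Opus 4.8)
The plan is to deduce this immediately from Corollary \ref{s-good-frame-nosym}, using categoricity (via Lemma \ref{categ-conseq}) to verify the four numbered hypotheses of that corollary. So this is pure bookkeeping rather than a new argument.

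First, $\kappa = \aleph_0$ is assumed. Next, by Lemma \ref{categ-conseq}.(\ref{categ-conseq-1}), categoricity in $\lambda$ together with $\kappa = \aleph_0$ gives that $K_{[\mu, \lambda]}$ has joint embedding --- in particular $K_\mu$ has joint embedding --- and that $K_\lambda$ (hence $K_{[\mu,\lambda]}$) has no maximal models. Finally, by Lemma \ref{categ-conseq}.(\ref{categ-conseq-2}), every model in $K_\lambda$ is $\mu^+$-model-homogeneous. Thus all four hypotheses of Corollary \ref{s-good-frame-nosym} are satisfied, and we conclude that $\mathfrak{s}$ is a type-full $\goodms{S}$ $\lambda$-frame.

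I expect no genuine obstacle here; the real content lives in Lemma \ref{categ-conseq} and Corollary \ref{s-good-frame-nosym}, and this statement is just their synthesis. The one point worth keeping in mind is that Lemma \ref{categ-conseq} itself relies on $\kappa = \aleph_0$ (it invokes stability in $\lambda$, i.e.\ Theorem \ref{lambdap-stability}, both to run the $\ltu$-increasing continuous chain of length $\mu^+$ that produces a $\mu^+$-model-homogeneous model and to verify that $K_\lambda$ has no maximal models without circularity); since $\kappa = \aleph_0$ is part of our hypotheses, this causes no trouble. One should also note that the standing assumptions of this section (Hypothesis \ref{hyp-nosym}: $\ltg$ an abstract universal ordering on $K_\mu$, amalgamation on $K_{[\mu,\lambda]}$, no maximal models on $K_{[\mu,\lambda)}$, and $(\mu,\lambda)$-tameness) are exactly what Lemma \ref{categ-conseq} and Corollary \ref{s-good-frame-nosym} require, so the citations apply verbatim.
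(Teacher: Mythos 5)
Your proof is correct and takes exactly the paper's route: the paper's own proof is the one-liner ``Lemma \ref{categ-conseq} tells us all the hypotheses of Corollary \ref{s-good-frame-nosym} are satisfied,'' and you have simply unpacked that citation item by item, which is a fine way to present it.
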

\begin{proof}
  Lemma \ref{categ-conseq} tells us all the hypotheses of Corollary \ref{s-good-frame-nosym} are satisfied.
\end{proof}

Note that categoricity in $\lambda$ is not the only hypothesis giving that all models in $K_\lambda$ are $\mu^+$-model-homogeneous. For example:

\begin{fact}[Theorem 5.4 in \cite{bg-v7}]
  Assume $K$ has amalgamation, is categorical in a cardinal $\theta$ so that $K_\theta$ has a $\mu^+$-model-homogeneous model (this holds if e.g. $\theta^{\mu} = \theta$). Then every member of $K_{\ge \chi}$ is $\mu^+$-model-homogeneous, where $\chi := \min (\theta, \sup_{\gamma < \mu} \beth_{(2^{\gamma})^+})$.
\end{fact}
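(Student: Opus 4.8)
The plan is to split the conclusion into three size-ranges. For models of size exactly $\theta$ there is nothing to prove: by categoricity they are all isomorphic to the single model of size $\theta$ which is assumed to be $\mu^+$-model-homogeneous. Along the way it is worth recording, via \cite{sh394}, that categoricity in $\theta > \text{LS}(K)$ together with amalgamation yields Galois-stability in every cardinal of $[\text{LS}(K), \theta)$ (in fact the $\mu^+$-model-homogeneous model of size $\theta$ directly gives stability in $\mu$, and hence by Fact \ref{gtu-existence} no maximal models below $\theta$); this is used freely below. After the $\theta$-case, the real work is the range $[\chi, \theta)$, and the range $(\theta, \infty)$ will then follow formally.

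For $\|N\| > \theta$ I would argue directly. Given $M \lea N$ with $\|M\| \le \mu$ and $p \in S(M)$, use L\"owenheim--Skolem (legitimate since $\mu \ge \text{LS}(K)$ and $\|N\| > \theta \ge \chi$) to choose $N_0$ with $M \lea N_0 \lea N$ and $\|N_0\| = \chi$. By the $[\chi, \theta)$-case (or the $\theta$-case, should $\chi = \theta$), $N_0$ is $\mu^+$-model-homogeneous, hence realizes $p$; since $N_0 \lea N$, so does $N$, and $M \ltu N$ follows. Thus everything reduces to showing: every $N$ with $\chi \le \|N\| < \theta$ is $\mu^+$-model-homogeneous.

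For this range the tool is a Morley-style omitting-types / Ehrenfeucht--Mostowski argument. Suppose for contradiction that some $N \in K_\lambda$, $\lambda \in [\chi, \theta)$, is not $\mu^+$-model-homogeneous, so there are $M \lea N$ with $\|M\| \le \mu$ and $p \in S(M)$ omitted by $N$ (i.e.\ not realized in $N$). Passing through Shelah's presentation theorem, one realizes $K$ together with a name for $M$ and for a witness to $p$ as a $\mathrm{PC}_{\aleph_0}$-type class, so that ``$N$ omits the Galois type $p$ over the fixed copy of $M$'' becomes: a certain expanded structure omits an appropriate family of syntactic types over a fixed parameter set. Since $\lambda \ge \chi$ is at least the relevant Hanf number for omitting such a family, Morley's method (Erd\H{o}s--Rado plus order-indiscernibles) produces structures of all cardinalities $\ge \chi$ --- in particular of size $\theta$ --- still omitting the pulled-back family over a copy of $M$; reducing back to $K$ yields a model of size $\theta$ with a $\lea$-submodel isomorphic to $M$ over which a Galois type is omitted, contradicting the $\mu^+$-model-homogeneity of the unique model of size $\theta$.

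The main obstacle is exactly this last step. One must carefully bookkeep the size of the parameter model $M$ --- this is where the threshold $\sup_{\gamma < \mu}\beth_{(2^\gamma)^+}$ (rather than the cruder $\beth_{(2^{\text{LS}(K)})^+}$, or $\beth_{(2^\mu)^+}$) comes from --- and one must verify that omitting a Galois type translates into omitting a family of syntactic types which is preserved under the EM-stretching, i.e.\ that the stretched configuration is again a genuine non-realized Galois type over a genuine isomorphic copy of $M$, and not a syntactic artifact; amalgamation is what lets one move the realizing witnesses around so that these translations go through. Carrying out these translation lemmas and the Hanf-number accounting is the technical heart of the argument; it is done in full as \cite[Theorem 5.4]{bg-v7}.
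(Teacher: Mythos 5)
The paper gives no proof of this statement; it is only cited as Theorem 5.4 of \cite{bg-v7}, so there is no in-paper argument to compare against. Your three-way decomposition (trivial at $\theta$; reduce $> \theta$ to $[\chi, \theta]$ via L\"owenheim--Skolem and transitivity of $\ltu$; handle $[\chi, \theta)$ by Morley's omitting-types method through Shelah's presentation theorem) is the standard architecture for this kind of downward saturation transfer and matches the route taken in \cite{bg-v7}. Two small points: in the $> \theta$ case the detour through ``$N_0$ realizes $p$'' is unnecessary --- $\mu^+$-model-homogeneity of $N_0$ gives $M \ltu N_0$ directly, and then $M \ltu N$ follows from Proposition \ref{ltl-basic-props}.(\ref{univ-trans}); and your passage from ``not $\mu^+$-model-homogeneous'' to ``omits a single $1$-type over a $\le\mu$-sized submodel'' silently uses the equivalence of $\mu^+$-model-homogeneity with $\mu^+$-saturation (\cite[Lemma 0.26]{sh576}), which is available here since amalgamation holds and $\mu \ge \text{LS}(K)$. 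You correctly identify, and openly delegate to the cited reference, the technical heart of the argument: translating Galois-type omission over a parameter model of size $\gamma < \mu$ into syntactic-type omission that survives EM stretching, which is exactly where the threshold $\sup_{\gamma < \mu}\beth_{(2^\gamma)^+}$ is earned. As a description of the architecture the proposal is correct, but it is not a self-contained proof.
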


\section{Getting symmetry}\label{getting-sym}

From Corollary \ref{s-good-frame-nosym}, we obtain from reasonable assumptions a forking notion that satisfies all the properties of a good $\lambda$-frame except perhaps symmetry. Note that assuming more tameness, the frame can also be extended (see Fact \ref{frame-ext}) to models of size above $\lambda$:

\begin{fact}\label{ext-frame}
  Let $\s = (K, \nf, S^{\text{bs}})$ be a $\goodms{S}$ $\lambda$-frame. Let $\theta > \lambda$ and let $\mathcal{F} := [\lambda, \theta)$. Assume $K_{\mathcal{F}}$ has amalgamation and no maximal models, and $K$ is $(\lambda, <\theta)$-tame. Then $\mathfrak{s}$ can be extended to a $\goodms{S}$ $\mathcal{F}$-frame. If $\mathfrak{s}$ is type-full, then the extended frame will also be type-full. 
\end{fact}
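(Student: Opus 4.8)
The plan is to follow Boney's proof \cite{ext-frame-jml} that tameness upgrades Shelah's extension of a good $\lambda$-frame to a good frame on all larger models, observing that symmetry is never invoked in deriving any of the \emph{other} axioms and that the whole argument localizes to an arbitrary interval $\mathcal{F} = [\lambda, \theta)$. First I would define the extended frame exactly as Shelah does (and as in Definition \ref{gemu-forking} of the present paper, with $\lambda$ in place of $\mu$): for $M \lea N$ in $K_{\mathcal F}$ and $p \in S (N)$, declare that $p$ \emph{does not fork over $M$} iff there is $M_0 \in K_\lambda$ with $M_0 \lea M$ such that for every $N_0 \in K_\lambda$ with $M_0 \lea N_0 \lea N$, $p \rest N_0$ does not $\s$-fork over $M_0$; and declare $p \in S^{\text{na}}(N)$ basic iff $p \rest N_0 \in S^{\text{bs}}(N_0)$ for some (equivalently, by monotonicity and density in $\s$, all large enough) $N_0 \in K_\lambda$ with $N_0 \lea N$. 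When $\s$ is type-full this last set is just $S^{\text{na}}(N)$ (a nonalgebraic type stays nonalgebraic under restriction), so type-fullness is preserved, and over models of size exactly $\lambda$ the new notion of forking agrees with the old one by monotonicity of $\s$-forking. The pre-frame axioms — invariance, monotonicity, and ``nonforking types are basic'' — then follow directly from the corresponding axioms of $\s$.

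Next I would handle the structural and soft axioms. Amalgamation and no maximal models in $K_{\mathcal F}$ are hypotheses; joint embedding in $K_{\mathcal F}$ follows from joint embedding in $K_\lambda$ (part of $\s$ being a good frame) together with amalgamation, via Lemma \ref{jep-from-amalgamation}. Density of basic types reduces, through the definition of the new basic types, to density in $\s$. Existence and local character transfer upward from $\lambda$ by the chain-and-pigeonhole arguments of Lemmas \ref{lambdap-existence} and \ref{lambdap-local} (equivalently \cite[Claim II.2.11]{shelahaecbook}); the point worth noting is that these arguments are actually cleaner here than in Section \ref{going-up-sec}, since no $\mu^+$-model-homogeneity hypothesis on the intermediate models is needed: the abstract good $\lambda$-frame already satisfies uniqueness, transitivity, and unrestricted local character over \emph{all} models of size $\lambda$, so when one builds the auxiliary chain of small models inside the $M_i$ there is no $\ltg$-ordering to maintain. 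Continuity then follows from local character and transitivity, and transitivity itself is a formal consequence of uniqueness and extension (as noted after Definition \ref{good-frame-def}).

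The two substantive transfers are uniqueness and extension. Uniqueness uses $(\lambda, <\theta)$-tameness: given $p, q \in S(N)$ nonforking over $M$ with $p \rest M = q \rest M$, pass to a common small witness $M_0 \in K_\lambda$, observe that for every small $N_0$ with $M_0 \lea N_0 \lea N$ the types $p \rest N_0$ and $q \rest N_0$ do not $\s$-fork over $M_0$ and agree on a small submodel of $M$, apply uniqueness in $\s$ to conclude $p \rest N_0 = q \rest N_0$, and then invoke tameness to get $p = q$ — this is Lemma \ref{lambdap-uniq} carried out with an abstract frame. Extension is the main obstacle, proved by induction on $\|N\|$: resolve $N$ as an increasing continuous chain of smaller models and build an increasing chain of nonforking extensions, using extension in $\s$ (or the induction hypothesis) at base and successor stages, uniqueness to pin each extension down, and, crucially, a direct-limit construction at limit stages where the local character axiom of the good $\lambda$-frame guarantees that the union of the chain of nonforking types still does not fork over $M$ — this is precisely the abstract version of Lemma \ref{ext-nf-types} and of \cite[Theorem 5.3]{ext-frame-jml}, and it is the only place where the ``$\kappa = \aleph_0$''-type behavior built into a good frame is genuinely needed. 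Finally, bs-stability in each cardinal of $\mathcal F$ is obtained as in Theorem \ref{lambdap-stability} and Lemma \ref{lambdap-stability-technical}: bs-stability in $\lambda$ is part of $\s$ being good, and it propagates upward along a resolution of any $M \in K_{\mathcal F}$ using local character, pigeonhole, and uniqueness. The delicate technical point throughout — and the reason tameness is indispensable — is making a single small witness $M_0$ serve uniformly for all small submodels $N_0$ of $N$; this is exactly where tameness and the local character of the base frame jointly do the work.
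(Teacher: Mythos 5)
Your proposal reconstructs Boney's argument in detail, but the paper's own proof simply cites \cite[Theorem 1.1]{ext-frame-jml} and notes that symmetry is the only axiom whose proof there invokes tameness for $2$-types (hence the remaining axioms transfer unchanged), that no maximal models must now be taken as a hypothesis rather than derived from symmetry, and that type-fullness is preserved by the local-character argument; your outline follows exactly this plan, so it is the same approach spelled out. The one small wrinkle worth flagging: your definition of basic types in the extended frame (``$p \rest N_0 \in S^{\text{bs}}(N_0)$ for some, equivalently all large enough, $N_0$'') is not obviously equivalent to the Shelah--Boney definition (nonforking over some $M_0 \in K_\lambda$) before existence has been established, and the parenthetical ``equivalently all large enough'' does not actually follow from monotonicity and density alone in a non-type-full frame --- it becomes true only once the existence/local-character transfer is in place, so the logical order of your checks should place that step before the equivalence is used.
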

\begin{proof}
  Apply \cite[Theorem 1.1]{ext-frame-jml}: its proof only uses the tameness for $2$-types hypothesis to obtain symmetry. Note that if (as there) we start with a good $\lambda$-frame, then no maximal models follows. Here we do not have symmetry, so we assume it as an additional hypothesis. The proof of Lemma \ref{lambdap-local} gives us that the extended frame is type-full if $\mathfrak{s}$ is.
\end{proof}

 We have justified:

\begin{hypothesis}\label{good-frame-hyp}
  $\mathfrak{s} = (K, \nf, S^{\text{bs}})$ is a $\goodm$ $\mathcal{F}$-frame, where $\mathcal{F}$ is an interval of cardinals of the form $[\lambda, \theta)$ for $\lambda$ a cardinal and $\theta > \lambda$ either a cardinal or $\infty$.
\end{hypothesis}

In this section, we will prove that $\mathfrak{s}$ also satisfies symmetry if $\theta$ is big-enough. Note that we do not need to assume tameness since enough tameness for what we want follows from the uniqueness and local character properties of $\s$-forking, see \cite[Theorem 3.2]{ext-frame-jml}.

Note that (see the definition of $\goodm$ in \ref{good-frame-def}) we do \emph{not} assume $\mathfrak{s}$ satisfies bs-stability. It will hold in the setup of the previous sections, but the arguments of this section work just as well without it. Note in passing that bs-stability and stability are equivalent:

\begin{fact}[\cite{shelahaecbook}, Claim II.4.2.1]\label{frame-stability}
  For any $\lambda' \in \mathcal{F}$, $\mathfrak{s} \upharpoonright \lambda'$ satisfies bs-stability if and only if $K$ is stable in $\lambda'$.
\end{fact}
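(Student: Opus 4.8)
The reverse implication is immediate and requires nothing beyond the pre-frame axioms: for every $M \in K_{\lambda'}$ we have $S^{\text{bs}} (M) \subseteq S^{\text{na}} (M) \subseteq S (M)$, so if $K$ is stable in $\lambda'$, i.e.\ $|S (M)| \le \|M\|$ for all $M \in K_{\lambda'}$, then a fortiori $|S^{\text{bs}} (M)| \le \|M\|$. (If $\mathfrak{s}$ happens to be type-full the forward direction is equally trivial, since then $S^{\text{bs}} (M) = S^{\text{na}} (M)$ and $S (M)$ is the union of $S^{\text{na}} (M)$ with the at most $\|M\|$ algebraic types over $M$; but for a general $\goodm$ frame the nonbasic nonalgebraic types are not controlled by bs-stability directly, so there is something to prove.)

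For the forward direction I would follow \cite[Claim II.4.2.1]{shelahaecbook}. Assume bs-stability and fix $M \in K_{\lambda'}$; since there are at most $\|M\|$ algebraic types over $M$ it suffices to bound $|S^{\text{na}} (M)|$ by $\lambda'$. The plan is to show that every $M \in K_{\lambda'}$ has a universal extension inside $K_{\lambda'}$, for then Proposition \ref{ltl-basic-props}.(5) (equivalently Remark \ref{rmk-good-ordering-stable}) yields that $K_{\lambda'}$ is stable. To build such an extension, use that the good-frame axioms give amalgamation and no maximal models in $K_{\lambda'}$, and construct a $\lea$-increasing continuous chain $(M_i)_{i < \lambda'}$ in $K_{\lambda'}$ with $M_0 = M$ such that every basic type over $M_i$ is realized in $M_{i + 1}$; this is possible precisely because $|S^{\text{bs}} (M_i)| \le \|M_i\| = \lambda'$ by bs-stability. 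Set $N := \bigcup_{i < \lambda'} M_i \in K_{\lambda'}$. Given $M' \gea M$ in $K_{\lambda'}$, one resolves $M'$ over $M$ as an increasing continuous chain $(M_\xi')$ in which each successor step realizes a basic type over the previous model -- such a resolution exists by density of basic types applied repeatedly -- and builds embeddings $f_\xi : M_\xi' \to N$ over $M$ by induction: at a successor step $f_\xi [M_\xi']$ sits inside some $M_j$, its transported basic type extends (by existence together with the extension axiom, and using that nonforking types are basic) to a basic type over $M_j$, hence is realized inside $N$, and uniqueness of $\s$-forking lets one glue this realization into an extension $f_{\xi + 1}$ of $f_\xi$. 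Thus $M'$ embeds into $N$ over $M$, so $N$ is universal over $M$.

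The main obstacle is exactly this universality argument. The naive way to produce a universal model over $M$ would realize \emph{all} types over each intermediate model and therefore already presuppose stability in $\lambda'$; the point is that density of basic types lets one reduce an arbitrary extension of $M$ to a transfinite composition of single-basic-type steps, each of which can be absorbed into $N$ using only the extension and uniqueness properties of $\s$-forking. Carrying out the cardinal bookkeeping carefully -- so that every model occurring stays of size $\lambda'$ and the chosen resolution of $M'$ genuinely exhausts $M'$ -- is routine but is where the real work lies; since this is precisely the content of \cite[Claim II.4.2.1]{shelahaecbook}, one may alternatively just cite that reference.
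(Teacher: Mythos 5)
The paper does not prove this statement: it is stated as a Fact with a citation to \cite[Claim II.4.2.1]{shelahaecbook}, so there is no ``paper's own proof'' to compare against. The reverse direction in your write-up is of course correct and trivial, and your overall plan for the forward direction (produce a universal extension of each $M$ by an increasing chain realizing all basic types at each step, then appeal to Proposition \ref{ltl-basic-props}.(5)) is the right shape. But the crucial step is the one you dismiss as ``glue this realization into an extension $f_{\xi+1}$ of $f_\xi$ using uniqueness,'' and as written it has a genuine gap.

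Concretely: suppose $f_\xi : M_\xi' \to M_{j_\xi}$ over $M$, and you have found $a_\xi \in M_{\xi+1}' \setminus M_\xi'$ with $p_\xi := \text{tp}(a_\xi / M_\xi'; M') \in S^{\text{bs}}(M_\xi')$, and $b_\xi \in M_{j_\xi+1}$ realizing a nonforking basic extension of $f_\xi(p_\xi)$ to $M_{j_\xi}$. Then indeed $\text{tp}(b_\xi / f_\xi[M_\xi']; M_{j_\xi+1}) = f_\xi(p_\xi)$, and by definition of Galois types there is \emph{some} amalgam $\widehat{N} \gea M_{j_\xi+1}$ and $g : M_{\xi+1}' \to \widehat{N}$ extending $f_\xi$ with $g(a_\xi) = b_\xi$. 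But $\widehat{N}$ is an arbitrary amalgam, not a submodel of $N$, and $g[M_{\xi+1}']$ has no reason to land inside $M_{j_\xi+1}$ (or any $M_j$). Uniqueness of $\s$-forking plays no role in fixing this: uniqueness controls \emph{types}, not the ambient model into which the extended embedding maps. To force the amalgam back into $N$ one would need $M_{j_\xi+1}$ to be universal over $f_\xi[M_\xi']$, which is precisely what the whole argument is trying to establish, so the reasoning as written is circular. (Realizing all basic types over $M_i$ in $M_{i+1}$ is weaker than making $M_{i+1}$ universal over $M_i$; the latter requires realizing ``model types,'' not just single basic types.) Closing this gap is the real content of Shelah's claim; the bookkeeping about the resolution length of $M'$ you flag is the easier part.
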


Moreover, eventual stability will follow from the structural properties of forking:

\begin{prop}\label{eventual-stability} \
  \begin{enumerate}
  \item If $2^\lambda \in \mathcal{F}$, then $K$ is stable in $2^\lambda$.
  \item Assume $\chi_0 \in \mathcal{F}$ and $K$ is stable in $\chi_0$. Then $K$ is stable in every $\chi \ge \chi_0$ with $\chi \in \mathcal{F}$.
  \end{enumerate}

  In particular, if $\chi$ is a cardinal with $2^\lambda \le \chi < \theta$, then $K$ is stable in $\chi$.
\end{prop}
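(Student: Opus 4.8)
The plan is, for both parts, to first establish bs-stability in the relevant cardinal (i.e.\ $|S^{\text{bs}} (M)| \le \|M\|$) and then to invoke Fact \ref{frame-stability} to upgrade this to stability of $K$. The underlying point is that local character together with uniqueness forces a basic type over a large model to be coded by a small amount of data.

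For part (1) I would argue as follows. Let $M \in K_{2^\lambda}$. Since $\s$ satisfies local character and transitivity, Proposition \ref{kappabar} attaches to each $p \in S^{\text{bs}} (M)$ a model $M_p \in K_\lambda$ with $M_p \lea M$ over which $p$ does not $\s$-fork; by uniqueness, $p$ is recovered from the pair $(M_p, p \rest M_p)$. Now count: $M$ has at most $\|M\|^\lambda = (2^\lambda)^\lambda = 2^\lambda$ submodels of size $\lambda$, and each $M' \in K_\lambda$ carries at most $2^{\|M'\|} = 2^\lambda$ Galois types (the standard bound, for which the amalgamation built into the frame is more than enough). Hence $|S^{\text{bs}} (M)| \le 2^\lambda \cdot 2^\lambda = 2^\lambda = \|M\|$, so bs-stability holds in $2^\lambda$, and Fact \ref{frame-stability} yields stability of $K$ in $2^\lambda$.

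For part (2) I would use induction on $\chi \in \mathcal{F}$ with $\chi \ge \chi_0$, proving ``$K$ is stable in $\chi$''; the base case $\chi = \chi_0$ is the hypothesis. For $\chi > \chi_0$, fix $M \in K_\chi$ and write it as a continuous increasing union $M = \bigcup_{i < \chi} M_i$ with $\chi_0 \le \|M_i\| < \chi$ for all $i < \chi$ (possible since $\chi > \chi_0 \ge \lambda = \min \mathcal{F}$; the usual care is needed when $\chi$ is singular). For $p \in S^{\text{bs}} (M)$, local character yields $i_p < \chi$ such that $p$ does not $\s$-fork over $M_{i_p}$; monotonicity makes $p \rest M_{i_p}$ basic, and uniqueness makes $(i_p, p \rest M_{i_p})$ determine $p$. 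By the induction hypothesis applied to $\|M_i\| \in [\chi_0, \chi) \subseteq \mathcal{F}$, the class $K$ is stable in $\|M_i\|$, so $|S^{\text{bs}} (M_i)| \le |S (M_i)| \le \|M_i\| < \chi$; therefore $|S^{\text{bs}} (M)| \le \sum_{i < \chi} |S^{\text{bs}} (M_i)| \le \chi$. Fact \ref{frame-stability} then converts bs-stability in $\chi$ to stability of $K$ in $\chi$. Finally, the ``in particular'' clause drops out of part (2) applied with $\chi_0 := 2^\lambda$: whenever $2^\lambda \le \chi < \theta$ we have $2^\lambda \in \mathcal{F}$, part (1) gives stability in $2^\lambda$, and part (2) propagates it up to $\chi$.

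I do not expect a serious obstacle here. The only points calling for attention are the size bookkeeping in the resolution of part (2) --- making sure every piece has size in $[\chi_0, \chi)$ even when $\chi$ is singular --- and the routine cardinal arithmetic $(2^\lambda)^\lambda = 2^\lambda$ together with the bound $|S (M')| \le 2^{\|M'\|}$ in part (1). The conceptual step, extracting a small nonforking base and reconstructing the type by uniqueness, is just the good-frame analogue of the classical first-order counting argument, and Fact \ref{frame-stability} takes care of the passage from basic types to all types.
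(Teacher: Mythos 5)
Your proof is correct, and the core mechanism is the same as the paper's: extract a small (size-$\lambda$) nonforking base for each basic type via local character and Proposition~\ref{kappabar}, use uniqueness to see that the type is determined by that base together with its restriction there, count, and convert bs-stability to stability via Fact~\ref{frame-stability}. The packaging differs in two minor but pleasant ways. In part (1) the paper fixes $\chi^+$-many types, resolves $M$, and runs pigeonhole arguments (first to make the index $j_i$ constant, then to make the restriction constant); you instead bound $|S^{\text{bs}}(M)|$ directly by the number of pairs $(M_p, p \rest M_p)$ with $M_p \in K_\lambda$, $M_p \lea M$, which is at most $\|M\|^\lambda \cdot 2^\lambda = 2^\lambda$. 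This is slightly cleaner and sidesteps a step the paper glosses over: the paper's phrase ``By Proposition~\ref{kappabar} and transitivity, there exists $M' \in K_\lambda$ such that $p_i$ does not $\s$-fork over $M'$ for all $i$'' quietly needs one more pigeonhole to make $M'$ uniform in $i$, whereas your counting never needs a uniform $M'$. In part (2) the paper simply cites ``the proof of stability in Fact~\ref{ext-frame}'' (Boney's upward transfer); you spell out the induction, which is essentially the argument behind Lemma~\ref{lambdap-stability-technical} with the cofinality restriction removed because local character in a good frame applies to chains of arbitrary limit length. Your bookkeeping on singular $\chi$ (taking a resolution of length $\text{cf}(\chi)$ with pieces in $[\chi_0, \chi) \cap \mathcal{F}$) is the right thing, and the bound $|S(M')| \le 2^{\|M'\|}$ you invoke is the standard Galois-type count (it follows from Shelah's presentation theorem alone; amalgamation is not actually needed, though it certainly does no harm). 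So: correct, and a somewhat more self-contained and tighter write-up of the same strategy.
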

\begin{proof} \
  \begin{enumerate}
  \item Let $\chi := 2^\lambda$. By Fact \ref{frame-stability}, it is enough to show that $\mathfrak{s} \upharpoonright \chi$ satisfies bs-stability. Let $M \in K_{\chi}$, and let $(p_i)_{i < \chi^+}$ be elements of $S^{\text{bs}} (M)$. Let $(M_i)_{i < \chi}$ be a resolution of $M$. For each $i < \chi^+$, local character implies there exists $j_i < \chi$ such that $p_i$ does not $\s$-fork over $M_{j_i}$. By the pigeonhole principle, we can assume without loss of generality that $j_i = j_0$ for all $i < \chi^+$. By Proposition \ref{kappabar} and transitivity, there exists $M' \in K_\lambda$ such that $M' \lea M_{j_0}$ and $p_i$ does not $\s$-fork over $M'$ for all $i < \chi^+$. We know that $|S (M')| \le 2^\lambda = \chi$, so by the pigeonhole principle again, we can assume that there is $q \in S (M')$ such that $p_i \upharpoonright M' = q$ for all $i < \chi^+$. By uniqueness, $p_i = p_{i'}$ for all $i, i' < \chi^+$, and the result follows.
  \item By the proof of stability in Fact \ref{ext-frame}.
  \end{enumerate}
\end{proof}

We would like to give conditions under which $\mathfrak{s}$ has symmetry. A useful fact\footnote{This is not crucial to our argument, but enables us to obtain an explicit upper bound on the amount of tameness needed.} is that it is enough to look at $\mathfrak{s} \upharpoonright \lambda$:

\begin{fact}[Theorem 6.8 in \cite{tame-frames-revisited-v4}]\label{symm-lambda}
  $\mathfrak{s}$ has symmetry if and only if $\mathfrak{s} \upharpoonright \lambda$ has symmetry.
\end{fact}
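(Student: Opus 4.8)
The statement is a biconditional, so I would treat the two directions separately. The direction \emph{``$\mathfrak{s}$ has symmetry $\Rightarrow$ $\mathfrak{s}\rest\lambda$ has symmetry''} is routine. Given an instance of the symmetry hypothesis with all models in $K_\lambda$, apply symmetry of $\mathfrak{s}$ to produce witnesses $M_1\ni a_1$ and $N'\gea N$, a priori of size $>\lambda$, and then cut them down: by downward Löwenheim--Skolem take $M_1^-\lea M_1$ in $K_\lambda$ with $M_0\lea M_1^-$ and $a_1\in M_1^-$ --- then $\text{tp}(a_2/M_1^-;N')$ still does not $\s$-fork over $M_0$ by monotonicity of $\s$-forking (Remark \ref{monot-s-fork}) --- and take $N^-\lea N'$ in $K_\lambda$ with $|N|\cup|M_1^-|\subseteq|N^-|$, so that $N\lea N^-$ and $M_1^-\lea N^-$ by AEC coherence and the Galois type of $a_2$ over $M_1^-$ is unchanged. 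Then $M_1^-,N^-$ witness symmetry of $\mathfrak{s}\rest\lambda$ for that instance.

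For the substantive direction \emph{``$\mathfrak{s}\rest\lambda$ has symmetry $\Rightarrow$ $\mathfrak{s}$ has symmetry''}, I would first observe that the relevant tameness comes for free: by \cite[Theorem 3.2]{ext-frame-jml}, uniqueness together with local character of $\mathfrak{s}$ already imply $(\lambda,<\theta)$-tameness (even for $2$-length types). The plan is then to invoke uniqueness of the extension of a $\lambda$-frame. Assuming $\mathfrak{s}\rest\lambda$ has symmetry, run the construction of \cite{ext-frame-jml} that underlies Fact \ref{ext-frame} on $\mathfrak{s}\rest\lambda$; since that construction uses tameness for $2$-length types only in order to obtain symmetry in the extension, and symmetry at $\lambda$ is now assumed, its output is a $\goodm$ $\mathcal{F}$-frame $\mathfrak{s}'$ that \emph{does} satisfy symmetry and has $\mathfrak{s}'\rest\lambda=\mathfrak{s}\rest\lambda$. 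By uniqueness of the $\goodm$ extension (Remark \ref{frame-up}), $\mathfrak{s}'=\mathfrak{s}$, so $\mathfrak{s}$ has symmetry.

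The point I expect to be the main obstacle is checking that the construction of \cite{ext-frame-jml} really goes through for a frame satisfying only the $\goodm$ axioms: it is phrased for frames that also satisfy bs-stability, but bs-stability should be used there only to transfer bs-stability itself, not to run the core construction. If that verification turns out to be unpleasant, there is a back-up, entirely hands-on, argument: given an instance of symmetry over a possibly large base $M_0\in K_{\mathcal{F}}$, use local character (Proposition \ref{kappabar}) to choose $M_0^-\lea M_0$ in $K_\lambda$ over which \emph{both} $\text{tp}(a_1/M_2;N)$ and $\text{tp}(a_2/M_0;N)$ are non-$\s$-forking; restrict the whole configuration to $K_\lambda$ by downward Löwenheim--Skolem; apply symmetry of $\mathfrak{s}\rest\lambda$ to get $M_1^-\ni a_1$ and $N^*\gea N^-$ with $\text{tp}(a_2/M_1^-;N^*)$ non-$\s$-forking over $M_0^-$; and then transport this small configuration back up --- amalgamating inside a large model so that the relevant elements stay literally themselves, enlarging the left-hand model to one $\gea M_0$ containing $M_1^-$ by coherence, and using extension and uniqueness of $\mathfrak{s}$ to upgrade the non-$\s$-forking statement over $M_0^-$ to one over $M_0$ realized by $a_2$ over some $N'\gea N$. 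In either route the genuinely nontrivial step is this last transport, where one must keep $a_1$ inside the produced $M_1$ and ensure the final independence is over $M_0$ and not merely over $M_0^-$; this is precisely where uniqueness, transitivity, extension, and AEC coherence all get used.
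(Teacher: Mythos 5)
The statement is quoted from an external reference (\cite{tame-frames-revisited-v4}), so the paper itself contains no proof to compare against; I will assess the proposal on its own merits.

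Your forward direction (cut a witness of symmetry for $\mathfrak{s}$ down to $K_\lambda$ by Löwenheim--Skolem, using monotonicity and coherence) is correct.

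Your main route for the substantive direction has a genuine gap. You write that \cite[Theorem 3.2]{ext-frame-jml} gives $(\lambda,<\theta)$-tameness ``even for $2$-length types'' from uniqueness and local character, and then conclude that Boney's construction run on $\mathfrak{s}\rest\lambda$ would output a frame with symmetry. Both steps fail. Uniqueness of $\s$-forking for the (necessarily $1$-) types of the frame yields tameness for $1$-types only; nothing in the $\goodm$ axioms produces tameness for $2$-types. And Boney's extension argument does need $2$-type tameness precisely to transfer symmetry from $\lambda$ to $K_{\ge\lambda}$ — this is exactly what the present paper records in its proof of Fact \ref{ext-frame} (``its proof only uses the tameness for $2$-types hypothesis to obtain symmetry''). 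Having symmetry at $\lambda$ does not remove that dependence, because the $2$-type tameness is used to propagate symmetry upward, not to establish it at $\lambda$. Without it, the construction produces a $\goodms{S}$ frame, and Remark \ref{frame-up} then only tells you what you already knew, namely $\mathfrak{s}'=\mathfrak{s}$, with no information about symmetry. You have also misidentified the obstacle: you flag bs-stability as the likely issue, but bs-stability is not the problem — $2$-type tameness is.

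Your hands-on backup is the right spirit and much closer to what a direct proof (and the cited Theorem 6.8) must actually do: push the configuration down to $K_\lambda$ via local character (Proposition \ref{kappabar}), apply $\lambda$-symmetry, and lift back up. But you explicitly leave the lifting step — keeping $a_1$ in the produced $M_1$, keeping $a_2$ as the realizing element over an $N'\gea N$, and upgrading nonforking over $M_0^-$ to nonforking over $M_0$ using extension, uniqueness and transitivity while preserving the ambient embeddings — as the part you ``expect'' to be hard without doing it. That step is the entire content of the theorem, so as written the backup is a plan, not a proof.
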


Since we are not assuming anything about how $\mathfrak{s}$ is defined, we will work by contradiction: We will show that if $\theta$ is big enough and symmetry fails, then we get the order property, a nonstructure property which implies unstability. This is how the symmetry property of forking was originally proven in the first-order context, see \cite[Theorem III.4.13]{shelahfobook}. The same approach was later used in a non-elementary setup in \cite[Theorem 5.1]{sh48}, and generalized in \cite[Theorem 5.14]{bgkv-v2}. We will rely on the proof of the latter.

The definitions and fact below do not need Hypothesis \ref{good-frame-hyp}.

\begin{mydef}
  Let $\alpha$, $\chi$ and $\gamma$ be cardinals. A model $N$ has the \emph{$(\alpha, \chi)$-order property of length $\gamma$} if there exists $M \in K_{\le \chi}$ with $M \lea N$ (we also allow $M$ to be empty) and $(\bar{a}_i)_{i < \gamma}$, $\bar{a}_i \in \fct{\alpha}{N}$ so that for any $i_0 < i_1 < \gamma$ and $j_0 < j_1 < \gamma$, $\text{tp} (\bar{a}_{i_0} \bar{a}_{i_1} / M; N) \neq \text{tp} (\bar{a}_{j_1} \bar{a}_{j_0} / M; N)$. If $\chi = 0$, we omit it. 

  $K$ has the \emph{$(\alpha, \chi)$-order property of length $\gamma$} if some $N \in K$ has it. $K$ has the \emph{$(\alpha, \chi)$-order property} if it has the $(\alpha, \chi)$-order property for all lengths (we sometimes also say $K$ has the $(\alpha, \chi)$-order property of length $\infty$). $K$ has the \emph{order property} if it has the $\alpha$-order property for some $\alpha$.
\end{mydef}

\begin{remark}\label{op-rmk}
If $N$ has the $(\alpha, \chi)$-order property of length $\gamma$, then it has the $(\alpha + \chi)$-order property of length $\gamma$. 
\end{remark}

\begin{mydef}
  Given a cardinal $\chi$, define $\hanf{\chi} := \beth_{(2^{\chi})^+}$. %% Define also for $\theta > 0$, $H(<\theta) := \sup_{\theta' < \theta} H (\theta')$. Set $H(<0) := \aleph_0$.
\end{mydef} 

\begin{fact}\label{op-basic-facts} \
  \begin{enumerate}
    \item \label{op-length} If $K$ has the $(\alpha, \chi)$-order property of length $\hanf{\alpha + \chi + \text{LS} (K)}$, then $K$ has the $(\alpha, \chi)$-order property.
    \item\label{op-unstable} If $K$ has the $(\alpha, \chi)$-order property, then it is $\alpha$-unstable in $\chi'$ for all $\chi' \ge \chi$.
  \end{enumerate}
\end{fact}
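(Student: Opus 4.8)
The two parts are independent, and as the excerpt notes neither needs Hypothesis \ref{good-frame-hyp}; I would prove them in turn.

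For (\ref{op-length}) the plan is to run Morley's method through Shelah's presentation theorem. Write $\mu := \alpha + \chi + \text{LS} (K)$, so $\hanf{\alpha + \chi + \text{LS} (K)} = \beth_{(2^\mu)^+}$. Starting from $N \in K$, $M_0 \lea N$ with $\|M_0\| \le \chi$, and $(\bar{a}_i)_{i < \beth_{(2^\mu)^+}}$ in $\fct{\alpha}{N}$ witnessing the $(\alpha, \chi)$-order property of that length, I would fix a first-order language $L_1 \supseteq L (K)$ with $|L_1| \le \text{LS} (K)$, a theory $T_1$, and a set of types $\Gamma$ presenting $K$ (the members of $K$ are the $L(K)$-reducts of models of $T_1$ omitting $\Gamma$, with $\lea$ the $L_1$-substructure relation), expand $N$ to a model of $T_1$ omitting $\Gamma$, and add constants for the elements of $M_0$ (the new language still has size $\le \mu$). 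There are at most $2^\mu$ Galois types over $M_0$ of tuples of length $\le \alpha + \aleph_0$, so the standard Ehrenfeucht--Mostowski construction — Erd\H{o}s--Rado applied to the colouring of increasing finite tuples from the sequence by their configuration type, of which there are at most $2^\mu$, which is exactly why a sequence of length $\beth_{(2^\mu)^+}$ suffices — yields a blueprint whose Ehrenfeucht--Mostowski models lie in $K$ (they omit $\Gamma$, since every first-order type realized by a tuple of indiscernibles already occurs in the original model) and contain, over the copy of $M_0$, an order-indiscernible sequence of $\alpha$-tuples of any prescribed length in which an increasing pair and a decreasing pair have the first-order — hence the Galois — types, respectively, of an increasing and of a decreasing pair of the original sequence. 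Since those two differ by hypothesis, the order property holds at every length. The delicate point, and the main obstacle, is the passage between Galois types over $M_0$ and first-order types in this presentation (distinct Galois types must force distinct first-order types, and the first-order types of the indiscernibles must transfer back to Galois types, the latter using that the Ehrenfeucht--Mostowski models stay in $K$); this is handled exactly as in earlier tameness arguments, e.g.\ \cite{tamenessone, baldwinbook09}, and is routine.

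For (\ref{op-unstable}), fix $\chi' \ge \chi$ (we may assume $|\alpha| \le \chi'$); I want $M \in K_{\chi'}$ with $|S^\alpha (M)| > \chi'$. Using part (\ref{op-length}) — so the $(\alpha, \chi)$-order property of \emph{all} lengths — together with an Ehrenfeucht--Mostowski construction as above, I would fix a linear order $I = I_0 \cup I_1$ with $|I_0| = \chi'$, with $I_0$ having at least $\chi'^+$ Dedekind cuts, and with $I_1$ a set of at least $\chi'^+$ elements each realizing a distinct cut of $I_0$ (the existence of such an $I_0$ is a standard fact about linear orders), and then take $N \in K$, $M_0 \lea N$ with $\|M_0\| \le \chi$, and an order-indiscernible-over-$M_0$ sequence $(\bar{a}_i)_{i \in I}$ in $\fct{\alpha}{N}$ with $\text{tp} (\bar{a}_{i_0} \bar{a}_{i_1} / M_0; N) \neq \text{tp} (\bar{a}_{i_1} \bar{a}_{i_0} / M_0; N)$ for $i_0 < i_1$. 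Let $M \lea N$ have size $\chi'$ with $M_0 \cup \bigcup_{i \in I_0} \bar{a}_i \subseteq M$. Then the types $\text{tp} (\bar{a}_i / M; N)$, $i \in I_1$, are pairwise distinct: if $i \neq i'$ in $I_1$ gave the same Galois type over $M$, then picking $j \in I_0$ strictly between their two cuts and restricting to $M_0 \cup \bar{a}_j \subseteq M$ would give $\text{tp} (\bar{a}_i \bar{a}_j / M_0; N) = \text{tp} (\bar{a}_{i'} \bar{a}_j / M_0; N)$, but one of these is an increasing and the other a decreasing pair, contradicting the order property. Hence $|S^\alpha (M)| \ge \chi'^+ > \|M\|$, so $K$ is $\alpha$-unstable in $\chi'$. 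Here the only things to verify beyond the calculation are the existence of the index order $I_0$ with enough cuts and the Ehrenfeucht--Mostowski bookkeeping inherited from part (\ref{op-length}); both are standard, and by Remark \ref{op-rmk} the conclusion transfers between the $(\alpha, \chi)$- and $(\alpha + \chi)$-order properties.
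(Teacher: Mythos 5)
Your proof is correct and follows exactly the route the paper indicates by citation: Morley's method through Shelah's presentation theorem for part (\ref{op-length}), and the Dedekind-cut / indiscernible-sequence counting argument for part (\ref{op-unstable}). The only slips are cosmetic — the Erd\H{o}s--Rado colouring should be by first-order configuration types in the presentation language $L_1$ augmented by constants for $M_0$, not by Galois types (though the bound $2^\mu$ is the same either way), and in part (\ref{op-unstable}) the edge case $|\alpha| > \chi'$ deserves a word — but neither affects the substance of the argument.
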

\begin{proof}
  The statements essentially appear in \cite[Claim 4.5.3, Claim 4.7.2]{sh394}. The proof of (\ref{op-length}) is an application of Morley's method together with Shelah's presentation theorem, and a proof of a statement similar to (\ref{op-unstable}) is sketched in \cite[Fact 5.13]{bgkv-v2}.
\end{proof}

\begin{fact}\label{nosym-op}
  If $\mathfrak{s}$ does not have symmetry, then $K$ has the $(2, \lambda)$-order property of length $\theta$.
\end{fact}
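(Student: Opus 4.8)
The plan is to extract an \emph{asymmetric configuration} from the failure of symmetry and then stretch it into a long order-like sequence of $2$-tuples, following the proof of \cite[Theorem 5.14]{bgkv-v2} (which itself adapts the proof that first-order forking is symmetric, \cite[Theorem III.4.13]{shelahfobook}).

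First, by Fact \ref{symm-lambda} we may assume $\mathfrak{s} \rest \lambda$ fails symmetry, so fix $M_0 \lea M_2 \lea N_\ast$ in $K_\lambda$, $a_1 \in N_\ast$ and $a_2 \in M_2$ with $\text{tp}(a_1 / M_2; N_\ast)$ not $\s$-forking over $M_0$ and $\text{tp}(a_2 / M_0; N_\ast) \in S^{\text{bs}}(M_0)$, but such that there is \emph{no} $M_1 \in K_{\mathcal{F}}$ with $M_0 \lea M_1$ and $a_1 \in M_1$, and no $N' \in K_{\mathcal{F}}$ with $N' \gea N_\ast$, with $\text{tp}(a_2 / M_1; N')$ not $\s$-forking over $M_0$. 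For elements $c, d$ of a model $N \in K_{\mathcal{F}}$ with $M_0 \lea N$, say $c$ is \emph{$\s$-free over $d$} if there are $P, N' \in K_{\mathcal{F}}$ with $M_0 \lea P \lea N'$, $d \in P$, $c \in N'$ and $\text{tp}(c / P; N')$ not $\s$-forking over $M_0$. Since (for fixed $M_0$) $\s$-forking is a relation on types (invariance and monotonicity), whether $c$ is $\s$-free over $d$ depends only on $\text{tp}(cd / M_0)$. The configuration says precisely that ``$c$ is $\s$-free over $d$'' holds of the pair $(a_1, a_2)$ but fails of $(a_2, a_1)$; and by existence and extension there is a pair $(c_\ast, d_\ast)$ realizing $\text{tp}(a_2 / M_0)$ and $\text{tp}(a_1 / M_0)$ in its two coordinates with $c_\ast$ $\s$-free over $d_\ast$. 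So the Galois type $r := \text{tp}(c_\ast d_\ast / M_0)$ and $\text{tp}(a_2 a_1 / M_0)$ have the same marginals but are distinct, since the $\s$-freeness invariant separates them.

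Now fix any $\gamma < \theta$ (if $\theta = \infty$, any ordinal). The plan is to build a $\lea$-increasing continuous chain $(N_i)_{i < \gamma}$ in $K_{\mathcal{F}}$ (so $\|N_i\| = \lambda + |i| < \theta$, with $N_0 \gea M_0$) carrying $2$-tuples $\bar{c}_i = (c_i, d_i)$, $c_i \models \text{tp}(a_2/M_0)$, $d_i \models \text{tp}(a_1/M_0)$, such that $\text{tp}(\bar{c}_i \bar{c}_j / M_0; N_\gamma)$ depends only on whether $i < j$, $i = j$ or $i > j$, and such that $c_i$ is $\s$-free over $d_j$ exactly when $i < j$. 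Concretely one lays down $(d_i)_{i < \gamma}$ first as a Morley sequence for $\text{tp}(a_1/M_0)$ over $M_0$ (via extension, uniqueness and transitivity of $\s$-forking), and then chooses each $c_i$ to realize the $\s$-nonforking extension of $\text{tp}(a_2/M_0)$ over a model containing the tail $\{d_j : j > i\}$ — making $c_i$ $\s$-free over all later $d_j$ — while at the same time installing $\text{tp}(c_i d_j / M_0) = \text{tp}(a_2 a_1 / M_0)$ for $j \le i$, so that $c_i$ is \emph{not} $\s$-free over any earlier-or-equal $d_j$. Granting this, for $i_0 < i_1$ and $j_0 < j_1$ the Galois type $\text{tp}(\bar{c}_{i_0}\bar{c}_{i_1} / M_0; N_\gamma)$, restricted to the $c$-coordinate of the first tuple together with the $d$-coordinate of the second, records that $c_{i_0}$ is $\s$-free over $d_{i_1}$ (as $i_0 < i_1$), whereas the same restriction of $\text{tp}(\bar{c}_{j_1}\bar{c}_{j_0} / M_0; N_\gamma)$ records that $c_{j_1}$ is not $\s$-free over $d_{j_0}$ (as $j_1 > j_0$); since $\s$-freeness is an invariant of the relevant $2$-type, these two $4$-types differ. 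Hence $N_\gamma$, with base $M_0 \in K_{\le \lambda}$, witnesses the $(2, \lambda)$-order property of length $\gamma$; as $\gamma < \theta$ was arbitrary, $K$ has the $(2, \lambda)$-order property of length $\theta$.

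The delicate point — and the one place where the failure of symmetry is genuinely used — is this inductive construction of the $\bar{c}_i$: one must force the cross-types between distinct tuples to depend only on $\text{sign}(i - j)$ while simultaneously keeping the asymmetric ``stuck'' relations for $j \le i$ and the ``free'' relations for $j > i$, which would be impossible for a symmetric independence relation. For this I would follow the bookkeeping of \cite[Theorem 5.14]{bgkv-v2}, using amalgamation and no maximal models in $K_{\mathcal{F}}$ to keep building, extension and uniqueness of $\s$-forking to control the free relations and the coherence of the chain of $c_i$'s, transitivity and local character at limit stages, and the seed configuration of the second paragraph to install the stuck relations; since every model produced has size $< \theta$, the construction runs through all $\gamma < \theta$.
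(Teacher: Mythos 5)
Your proposal follows exactly the paper's route: first invoke Fact \ref{symm-lambda} to reduce to a failure of symmetry in $\mathfrak{s} \upharpoonright \lambda$, then run the construction of \cite[Theorem 5.14]{bgkv-v2} (the asymmetric ``$\s$-free'' invariant, the long sequence of $2$-tuples with order-dependent cross-types) to extract the $(2,\lambda)$-order property; the paper's proof consists of precisely these two citations, and your elaboration is a faithful unpacking of what the cited argument does.
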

\begin{proof}
  By Fact \ref{symm-lambda}, $\mathfrak{s} \upharpoonright \lambda$ does not have symmetry. The result now follows by exactly the same proof as \cite[Theorem 5.14]{bgkv-v2}.
\end{proof}
\begin{cor}\label{lambdap-sym}
  If $\theta \ge \hanf{\lambda}$, then $\mathfrak{s}$ has symmetry.
\end{cor}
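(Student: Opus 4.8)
The plan is to run the standard order-property dichotomy: assume toward a contradiction that $\mathfrak{s}$ fails symmetry, use the nonstructure machinery to extract the full order property, deduce $2$-instability at $2^\lambda$, and contradict stability at $2^\lambda$ which comes from the structural properties of forking.

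Concretely, suppose $\mathfrak{s}$ does not have symmetry. Then Fact \ref{nosym-op} gives that $K$ has the $(2, \lambda)$-order property of length $\theta$. Since $\mathfrak{s}$ is an $\mathcal{F}$-frame with $\min \mathcal{F} = \lambda$, we have $\text{LS} (K) \le \lambda$, hence $2 + \lambda + \text{LS} (K) = \lambda$ and so $\hanf{2 + \lambda + \text{LS} (K)} = \hanf{\lambda} \le \theta$. Restricting the witnessing sequence to an initial segment, $K$ has the $(2, \lambda)$-order property of length $\hanf{2 + \lambda + \text{LS} (K)}$, so Fact \ref{op-basic-facts}.(\ref{op-length}) yields that $K$ has the $(2, \lambda)$-order property of all lengths, and then Fact \ref{op-basic-facts}.(\ref{op-unstable}) gives that $K$ is $2$-unstable in every $\chi' \ge \lambda$, in particular in $2^\lambda$.

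For the contradiction I would observe that $2^\lambda \in \mathcal{F}$: indeed $\theta \ge \hanf{\lambda} = \beth_{(2^\lambda)^+} > 2^\lambda$, so $\lambda \le 2^\lambda < \theta$. Hence Proposition \ref{eventual-stability} gives that $K$ is stable in $2^\lambda$, and since $(2^\lambda)^2 = 2^\lambda$, Fact \ref{stab-longtypes} upgrades this to $2$-stability in $2^\lambda$, contradicting the previous paragraph. Therefore $\mathfrak{s}$ has symmetry.

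I do not expect a genuine obstacle; the only points needing care are the Hanf-number bookkeeping (so that the length $\hanf{2 + \lambda + \text{LS} (K)}$ demanded by Fact \ref{op-basic-facts}.(\ref{op-length}) equals $\hanf{\lambda}$ and is $\le \theta$), the verification that $2^\lambda$ really lies in the interval $\mathcal{F}$, and the transition from the $1$-stability delivered by Proposition \ref{eventual-stability} to the $2$-stability contradicted by the order property, which is exactly what Fact \ref{stab-longtypes} supplies.
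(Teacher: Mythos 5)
Your proof is correct and follows essentially the same route as the paper's own: assume symmetry fails, combine Fact \ref{nosym-op} with Fact \ref{op-basic-facts} to obtain the full $(2,\lambda)$-order property and hence $2$-instability at $2^\lambda$, and contradict Proposition \ref{eventual-stability} via Fact \ref{stab-longtypes}. The only difference is cosmetic (you pass from $1$-stability to $2$-stability, while the paper passes from $2$-instability to instability; these are contrapositives of the same application of Fact \ref{stab-longtypes}), and you spell out the Hanf-number bookkeeping and the membership $2^\lambda\in\mathcal{F}$ that the paper leaves implicit.
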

\begin{proof}
  If $\mathfrak{s}$ does not have symmetry, then by Fact \ref{nosym-op} and Fact \ref{op-basic-facts}.(\ref{op-length}), $K$ has the $(2, \lambda)$-order property and hence by Fact \ref{op-basic-facts}.(\ref{op-unstable}) is $2$-unstable in $2^\lambda$. By Theorem \ref{stab-longtypes}, $K$ is unstable in $2^\lambda$, contradicting Proposition \ref{eventual-stability} (note that $2^\lambda < h (\lambda) \le \theta$).
\end{proof}

Thus it seems quite a big gap between $\lambda$ and $\theta$ is needed. On the other hand the proof of Fact \ref{ext-frame} tells us that with enough tameness we can make $\mathcal{F}$ bigger:

\begin{fact}\label{frame-ext}
  Let $\theta' \ge \theta$ and let $\mathcal{F}' := [\lambda, \theta')$. Assume $K_{\mathcal{F}'}$ has amalgamation and no maximal models, and $K$ is $(\lambda, \theta')$-tame. Then $\mathfrak{s}$ can be extended to a $\goodm$ $[\lambda, \theta')$-frame. If $\mathfrak{s}$ has bs-stability, the extended frame will also have bs-stability. If $\mathfrak{s}$ is type-full, then the extended frame will also be type-full. 
\end{fact}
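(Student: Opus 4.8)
The plan is to reuse, almost verbatim, the argument behind Fact \ref{ext-frame}, namely Boney's extension technique from \cite[Theorem 1.1]{ext-frame-jml}; the only real difference is that we now start from a $\goodm$ frame, so symmetry is simply not among the properties that have to be transferred. Concretely, I would define the extension $\mathfrak{s}' = (K, \nf', S^{\text{bs}'})$ of $\mathfrak{s}$ to $K_{\mathcal{F}'}$ by the standard recipe, mimicking the passage from $\mu$-forking to $(\ge \mu)$-forking in Definition \ref{gemu-forking}: for $M \lea N$ in $K_{\mathcal{F}'}$ and $p \in S^{\text{na}}(N)$, declare that $p$ does not $\nf'$-fork over $M$ iff there is $M' \lea M$ in $K_\lambda$ such that $p \upharpoonright N'$ does not $\mathfrak{s}$-fork over $M'$ for every $N' \lea N$ of size $\lambda$ with $M' \lea N'$; if $\mathfrak{s}$ is type-full set $S^{\text{bs}'}(N) := S^{\text{na}}(N)$ for all $N \in K_{\mathcal{F}'}$, and otherwise put $p \in S^{\text{bs}'}(N)$ iff $p \upharpoonright N'$ is basic for some small submodel $N'$ of $N$ over which $p$ does not fork. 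One then checks that $\mathfrak{s}'$ is a pre-$\mathcal{F}'$-frame extending $\mathfrak{s}$, and verifies the $\goodm$ axioms.

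\textbf{Key steps.} Amalgamation and no maximal models in $K_{\mathcal{F}'}$ are hypotheses; joint embedding follows from joint embedding in $K_\lambda$ (part of being a $\goodm$ frame) together with amalgamation, by Lemma \ref{jep-from-amalgamation}. The pre-frame axioms and density of basic types are immediate from the recipe and the corresponding properties of $\mathfrak{s}$, and type-fullness (when it holds) is trivially preserved. Existence, local character, continuity and transitivity transfer exactly as in Section \ref{going-up-sec}; those arguments only invoked consequences of a $\goodm$ $\lambda$-frame, never symmetry. Uniqueness is where tameness is used: exactly as in \cite[Theorem 3.2]{ext-frame-jml}, $(\lambda, \theta')$-tameness --- which gives $(\lambda, <\theta')$-tameness, covering all models in $K_{\mathcal{F}'}$ --- yields uniqueness of $\nf'$-nonforking extensions. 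Extension is obtained by Boney's step-by-step construction (the analogue of Lemma \ref{lambdap-ext}): one resolves a target model of size $\chi \in [\theta, \theta')$ as an increasing union of strictly smaller models and lifts the type along the resolution, using extension in smaller cardinals at successor steps and uniqueness together with a direct-limit construction (as in Lemma \ref{ext-nf-types}) and continuity to glue at limit steps. Finally, if $\mathfrak{s}$ has bs-stability then $K$ is stable in $\lambda$ by Fact \ref{frame-stability}, hence stable in every $\chi \in \mathcal{F}'$ by the stability transfer used in the proof of Fact \ref{ext-frame} (equivalently, the second part of Proposition \ref{eventual-stability} applied to $\mathfrak{s}'$), so $\mathfrak{s}'$ again has bs-stability by Fact \ref{frame-stability}.

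\textbf{Main obstacle.} The one point needing a little more care than in Fact \ref{ext-frame} is that, since $\mathfrak{s}$ is only a $\goodm$ frame, we may not assume $K$ is stable in $\lambda$ at the outset, whereas the extension argument wants sufficiently saturated models in the intermediate cardinals (as in Lemma \ref{univ-embed}), and those require stability below the target. I expect this to be the main --- and still fairly minor --- obstacle. It is handled by building $\mathfrak{s}'$ by induction on the cardinals in $[\theta, \theta')$: once $\mathfrak{s}$ has been extended to a $\goodm$ $[\lambda, \chi)$-frame, Proposition \ref{eventual-stability} gives stability in all cardinals of $[2^\lambda, \chi)$, which suffices to run the step-$\chi$ construction (the degenerate case where $\mathcal{F}'$ contains no cardinal $\ge 2^\lambda$ being dealt with directly as in Section \ref{going-up-sec}). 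In particular, no tameness beyond $(\lambda, \theta')$-tameness for $1$-types is required, in contrast with Fact \ref{ext-frame}, where tameness for $2$-types was imposed solely to recover symmetry.
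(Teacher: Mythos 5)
Your overall approach matches the paper's: the paper's proof is the one-liner ``by Remark \ref{frame-up}, $\mathfrak{s}$ is determined by $\mathfrak{s}\upharpoonright\lambda$; now apply Fact \ref{ext-frame},'' and Fact \ref{ext-frame} in turn is Boney's extension theorem \cite[Theorem 1.1]{ext-frame-jml} with symmetry dropped. Your ``define the lift by the recipe of Definition \ref{gemu-forking} and re-verify the axioms'' is exactly what that citation unpacks to, so the two proofs are the same route at different zoom levels.

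Where you go beyond the paper is in flagging the hypothesis mismatch: Fact \ref{ext-frame} is stated for $\goodms{S}$ frames (which include bs-stability), while Fact \ref{frame-ext} only assumes a $\goodm$ frame, and the paper does not say why the missing bs-stability is harmless. That is a fair thing to worry about. However, your proposed repair is both more complicated than needed and not quite watertight. The issue is that you assume the extension step must pass through the saturated-intermediate-models machinery of Lemma \ref{univ-embed}, which indeed requires stability. But that machinery is only needed in Section \ref{good-frame-without-symmetry} because the \emph{skeletal} $\mu$-frame there has extension and uniqueness only relative to a witness pair $(M_0',M_0)$, so the ambient chain must be $\mu^+$-model-homogeneous to let one choose uniform witnesses (Lemma \ref{unif-witness}). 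Once one already starts from a $\goodm$ frame, extension, uniqueness, transitivity and local character are all witness-free, continuity follows from local character plus transitivity (as in Lemma \ref{lambdap-continuity}), and the step-up argument works over an \emph{arbitrary} resolution of the target model --- no saturation, hence no stability, is needed. This is what the paper's remark right after Hypothesis \ref{good-frame-hyp} (``the arguments of this section work just as well without [bs-stability]'') is pointing to. Consequently, your induction via Proposition \ref{eventual-stability} is unnecessary, and moreover your treatment of the ``degenerate case $\theta'\le 2^\lambda$'' by appealing to Section \ref{going-up-sec} does not work as stated, since that section presupposes an abstract universal ordering (hence stability) in the base cardinal. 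One smaller slip: Fact \ref{ext-frame} as stated already requires only $1$-type tameness; the $2$-type tameness you mention is the hypothesis of Boney's original Theorem 1.1 (where symmetry is proved), not of the paper's restated Fact.
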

\begin{proof}
  By Remark \ref{frame-up}, $\mathfrak{s}$ is determined by $\mathfrak{s} \upharpoonright \lambda$. Now apply Fact \ref{ext-frame}. 
\end{proof}

\begin{remark}
  We could replace $(\lambda, \theta')$-tameness by $(\lambda', \theta')$-tameness in the above, where $\lambda' \in \mathcal{F}$. This turns out to be equivalent (at least if we consider tameness for basic types) since the uniqueness property of $\mathfrak{s}$ gives us $(\lambda, \lambda')$-tameness for basic types.
\end{remark}

\begin{cor}\label{tame-sym}
  Let $\mathcal{F}' := [\lambda, \hanf{\lambda})$. Assume $K_{\mathcal{F}'}$ has amalgamation and no maximal models, and $K$ is $(\lambda, <\hanf{\lambda})$-tame. Then $\mathfrak{s}$ has symmetry.
\end{cor}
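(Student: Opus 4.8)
The plan is to reduce everything to Corollary \ref{lambdap-sym} by first enlarging the interval on which $\mathfrak{s}$ lives up to $[\lambda, \hanf{\lambda})$, using the frame-extension machinery. Write $[\lambda, \theta)$ for the interval of $\mathfrak{s}$ provided by Hypothesis \ref{good-frame-hyp}. If $\theta \ge \hanf{\lambda}$ there is nothing to do, since Corollary \ref{lambdap-sym} applies verbatim. So I would assume $\theta < \hanf{\lambda}$ and set $\theta' := \hanf{\lambda}$, so that $\theta' > \theta$ and $\mathcal{F}' = [\lambda, \theta')$ is exactly the interval in the statement.

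Next I would invoke Fact \ref{frame-ext}: by hypothesis $K_{\mathcal{F}'}$ has amalgamation and no maximal models, and $K$ is $(\lambda, <\hanf{\lambda})$-tame, which is precisely the tameness used (through Fact \ref{ext-frame}) to extend a frame to $[\lambda, \theta')$, since all types compared in that process live over models of size strictly below $\theta'$. This yields a $\goodm$ $[\lambda, \hanf{\lambda})$-frame $\mathfrak{t}$ extending $\mathfrak{s}$; in particular $\mathfrak{t} \rest \lambda = \mathfrak{s} \rest \lambda$.

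Now $\mathfrak{t}$ itself satisfies Hypothesis \ref{good-frame-hyp}, with the role of $\theta$ played by $\hanf{\lambda}$, and trivially $\hanf{\lambda} \ge \hanf{\lambda}$, so Corollary \ref{lambdap-sym} applied to $\mathfrak{t}$ gives that $\mathfrak{t}$ has symmetry. By Fact \ref{symm-lambda}, $\mathfrak{t} \rest \lambda$ has symmetry, hence so does $\mathfrak{s} \rest \lambda$, and applying Fact \ref{symm-lambda} once more (to $\mathfrak{s}$) gives that $\mathfrak{s}$ has symmetry, as desired.

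I expect the only genuinely delicate point to be matching up the tameness hypothesis: Fact \ref{frame-ext} is phrased with $(\lambda, \theta')$-tameness, whereas the corollary hands us $(\lambda, <\hanf{\lambda})$-tameness; one needs to observe that extending a frame to $[\lambda, \hanf{\lambda})$ only ever requires locality of types over models of size strictly below $\hanf{\lambda}$, which is exactly $(\lambda, <\hanf{\lambda})$-tameness (the form actually used in Fact \ref{ext-frame}, on which Fact \ref{frame-ext} rests). Everything else is straightforward bookkeeping with the results already established.
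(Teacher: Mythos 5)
Your proof is correct and follows the same route as the paper: extend the frame via Fact \ref{frame-ext} and then apply Corollary \ref{lambdap-sym}. You are a bit more careful than the paper in unpacking the ``without loss of generality'' step (using Fact \ref{symm-lambda} to pass from symmetry for the extended frame $\mathfrak{t}$ down to $\mathfrak{t}\rest\lambda=\mathfrak{s}\rest\lambda$ and back up to $\mathfrak{s}$), and you correctly note that although Fact \ref{frame-ext} is stated with $(\lambda,\theta')$-tameness, its proof via Fact \ref{ext-frame} only needs $(\lambda,<\theta')$-tameness, which is exactly what the corollary's hypotheses supply.
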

\begin{proof}
  Using Fact \ref{frame-ext}, we can extend $\s$ to assume without loss of generality that $\theta \ge \hanf{\lambda}$. Now use Corollary \ref{lambdap-sym}.
\end{proof}

\section{The main theorems}\label{main-thm}

We finally have our promised good frame:

\begin{thm}\label{main-thm-technical}
  Assume:

  \begin{enumerate}
    \item $K$ is an abstract elementary class. $\mu \ge \text{LS} (K)$ is a cardinal.
    \item $K_\mu \neq \emptyset$ has joint embedding.
    \item $\ltg$ is an abstract universal ordering on $K_\mu$. In particular (by Remark \ref{rmk-good-ordering-stable}), $K_\mu$ has amalgamation, no maximal models, and is stable.
    \item Splitting has $\aleph_0$-local character for $\ltg$-increasing chains in $K_\mu$.
    \item $\lambda > \mu$ is such that:
      \begin{enumerate}
        \item $K$ is $(\mu, < \hanf{\lambda})$-tame.
        \item $K_{[\mu, \hanf{\lambda})}$ has amalgamation and no maximal models.
        \item All the models in $K_\lambda$ are $\mu^+$-model-homogeneous.
      \end{enumerate}
  \end{enumerate}  

  Then $K$ has a type-full good $[\lambda, \hanf{\lambda})$-frame.
\end{thm}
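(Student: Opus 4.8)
The plan is purely to assemble the machinery of Sections \ref{poor-man-frame}--\ref{getting-sym}; there is no new mathematical content, only a verification that each block of running hypotheses is met. First I would observe that $\kappa = \aleph_0$, where $\kappa$ denotes the locality cardinal of Hypothesis \ref{hyp-nosym}.(\ref{ns-lc-hyp}): by hypothesis (4) splitting has $\aleph_0$-local character for $\ltg$-increasing chains in $K_\mu$, and $\aleph_0$ is the least regular cardinal. Then I would check the rest of Hypothesis \ref{hyp-nosym}. Items (\ref{nempty-hyp}) and (\ref{good-ordering-hyp}) are hypotheses (1) and (3) of the theorem. For item (\ref{amalg-lambdap}): since $\lambda < \hanf{\lambda} = \beth_{(2^\lambda)^+}$, we have $[\mu,\lambda] \subseteq [\mu,\hanf{\lambda})$, so $(\mu,\lambda)$-tameness follows from $(\mu,<\hanf{\lambda})$-tameness, while $K_{[\mu,\lambda]}$-amalgamation and $K_{[\mu,\lambda)}$-no maximal models follow from hypothesis (5)(b) (amalgamation and no maximal models restrict to a subinterval by a routine L\"owenheim--Skolem argument together with the Fact stated after the definition of amalgamation).

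Next I would apply Corollary \ref{s-good-frame-nosym}. Its additional hypotheses are: $\kappa = \aleph_0$ (just shown), $K_\mu$ has joint embedding (hypothesis (2) of the theorem), every model in $K_\lambda$ is $\mu^+$-model-homogeneous (hypothesis (5)(c)), and $K_\lambda$ has no maximal models. Only the last requires a one-line argument: given $M \in K_\lambda$, hypothesis (5)(b) provides $N \gta M$ with $N \in K_{[\mu,\hanf{\lambda})}$; picking $a \in N \setminus M$ and, by L\"owenheim--Skolem, $N' \lea N$ of size $\lambda$ with $M \cup \{a\} \subseteq N'$, coherence of the AEC gives $M \lta N' \in K_\lambda$. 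Corollary \ref{s-good-frame-nosym} then yields that $\mathfrak{s} = \mathfrak{s}_0 \rest \lambda$ is a type-full $\goodms{S}$ $\lambda$-frame.

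To pass to the interval $[\lambda,\hanf{\lambda})$ I would invoke Fact \ref{ext-frame} with $\theta := \hanf{\lambda}$ and $\mathcal{F} := [\lambda,\hanf{\lambda})$: $K_{\mathcal{F}}$ has amalgamation and no maximal models by hypothesis (5)(b), and $(\lambda,<\hanf{\lambda})$-tameness follows from $(\mu,<\hanf{\lambda})$-tameness since $\mu \lea \lambda$ (any locality witness of size $\lea \mu$ also has size $\lea \lambda$). This produces a type-full $\goodms{S}$ $[\lambda,\hanf{\lambda})$-frame $\mathfrak{s}'$. Since a $\goodms{S}$ frame is in particular a $\goodm$ frame, Hypothesis \ref{good-frame-hyp} is satisfied by $\mathfrak{s}'$ with $\theta = \hanf{\lambda}$.

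Finally I would apply Corollary \ref{lambdap-sym}: as $\theta = \hanf{\lambda} \gea \hanf{\lambda}$, $\mathfrak{s}'$ has symmetry. Being $\goodms{S}$, $\mathfrak{s}'$ already satisfies bs-stability and every other clause of Definition \ref{good-frame-def} (bs-stability is also recoverable directly from Theorem \ref{lambdap-stability} and Proposition \ref{eventual-stability}), so with symmetry $\mathfrak{s}'$ is a genuine type-full good $[\lambda,\hanf{\lambda})$-frame, as required. The bulk of the difficulty lies in the earlier sections; inside this proof the only substantive step is the appeal to Corollary \ref{lambdap-sym}, which is exactly where the large gap up to $\hanf{\lambda}$ is used --- a failure of symmetry would, via Fact \ref{nosym-op} and Fact \ref{op-basic-facts}, produce the order property below $\hanf{\lambda}$ and hence instability in $2^\lambda$, contradicting Proposition \ref{eventual-stability} --- while the only genuinely fiddly part is shuttling amalgamation, no maximal models, and tameness between the various intervals $\{\mu\}$, $[\mu,\lambda]$, $K_\lambda$, and $[\lambda,\hanf{\lambda})$.
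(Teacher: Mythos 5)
Your proof is correct and follows essentially the same route as the paper: verify the running Hypothesis \ref{hyp-nosym}, apply Corollary \ref{s-good-frame-nosym} to get a type-full $\goodms{S}$ $\lambda$-frame, then use the machinery of Section \ref{getting-sym} to obtain symmetry. The only cosmetic difference is that the paper invokes Corollary \ref{tame-sym} (which internally does the extension via Fact \ref{frame-ext} and then applies Corollary \ref{lambdap-sym}), whereas you extend via Fact \ref{ext-frame} to a $\goodms{S}$ $[\lambda,\hanf{\lambda})$-frame first and then apply Corollary \ref{lambdap-sym} directly to the extended frame; this amounts to the same chain of lemmas. Your version is actually slightly more careful about explicitly producing a frame on the full interval $[\lambda,\hanf{\lambda})$, a step the paper's two-line proof leaves implicit.
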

\begin{proof}
  Corollary \ref{s-good-frame-nosym}, gives us a $\goodms{S}$ $\lambda$-frame $\mathfrak{s}$. By Corollary \ref{tame-sym}, $\mathfrak{s}$ also has symmetry.
\end{proof}

We can use categoricity to derive some of the hypotheses above. We will use:

\begin{fact}\label{categ-facts}
  Assume $K$ has amalgamation and no maximal models. Assume $K$ is categorical in $\lambda$. Then:

  \begin{enumerate}
    \item \label{categ-stable} $K$ is stable in all $\text{LS} (K) \le \mu < \lambda$.
    \item \label{categ-ns} For any $\text{LS} (K) \le \mu < \text{cf} (\lambda)$ and any limit $\delta < \mu^+$, $\mu$-splitting has $\aleph_0$-local character for $\ltg$-chains, where $\ltg := \ltl{\mu}{\delta}$.
    \item \label{categ-down} Let $\hanfn{2} := \hanf{\hanf{\text{LS} (K)}}$. Assume $\lambda$ is a successor cardinal and $\lambda > \lambda_0 \ge h_2$. Then $K$ is $(\hanfn{2}, \lambda_0)$-tame and categorical in $\lambda_0$. In addition, the model of size $\lambda_0$ is saturated.
  \end{enumerate}
\end{fact}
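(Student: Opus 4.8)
All three items are consequences of results already in the literature, so the plan is to cite them and sketch the arguments rather than reprove anything. For (\ref{categ-stable}): under the blanket hypotheses of amalgamation and no maximal models, categoricity in $\lambda$ forces stability in every $\text{LS}(K)\le\mu<\lambda$. The standard argument is that instability in such a $\mu$ yields the order property, and via Shelah's presentation theorem and Morley's method this produces non-isomorphic models of size $\lambda$ (equivalently, more than $\lambda$ Galois types over a model of size $\mu$, which cannot all be realized in the unique model of size $\lambda$), contradicting categoricity. I would simply cite \cite{sh394} (or the exposition in \cite{baldwinbook09}).

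For (\ref{categ-ns}), fix $\text{LS}(K)\le\mu<\text{cf}(\lambda)$ and a limit $\delta<\mu^+$, and set $\ltg:=\ltl{\mu}{\delta}$; by Proposition \ref{ltl-basic-props} together with (\ref{categ-stable}) this $\ltg$ is an abstract universal ordering on $K_\mu$. The real content is that categoricity in a cardinal $\lambda$ of cofinality $>\mu$ rules out long $\mu$-splitting chains. The model $N$ of size $\lambda$ is $\mu^+$-model-homogeneous, since $\text{cf}(\lambda)>\mu$ lets one build inside $N$ a $\ltu$-increasing continuous chain of $\mu$-sized submodels of length $\mu^+$. Working inside $N$, and using that $\mu$-splitting of a type is inherited by all of its extensions, a failure of $\aleph_0$-local character for $\ltg$-chains can be amplified into a $\lea$-increasing continuous chain of length $\mu^+$ carrying a type over its union that $\mu$-splits over every member, contradicting the $\mu^+$-local character of $\mu$-splitting for $\lea$-increasing chains (Remark \ref{ns-local-stable}, i.e.\ \cite[Fact 4.6]{tamenessone}). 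Variants of this argument appear in \cite{shvi635}, \cite{vandierennomax}, and \cite{gvv-v3}, which I would reference.

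Item (\ref{categ-down}) assembles Shelah's downward categoricity transfer from a successor \cite{sh394}: when $\lambda$ is a successor, categoricity in $\lambda$ transfers down to every cardinal in the interval $[\hanfn{2},\lambda]$, so $K$ is categorical in $\lambda_0$; the model of size $\lambda_0$ is saturated because $K$ is stable below $\lambda_0$ (by (\ref{categ-stable})) and $\lambda_0$ lies above the Hanf number, so one can build a saturated model of size $\lambda_0$ and then invoke categoricity; and the $(\hanfn{2},\lambda_0)$-tameness is the locality of Galois types over saturated models extracted in the same analysis. Here too I would cite \cite{sh394} (and \cite{baldwinbook09}).

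The main obstacle is not a new idea but careful bookkeeping: one must check that each cited statement holds with exactly the Hanf-number bounds claimed — the double Hanf number $\hanfn{2}$ for both the downward transfer and the tameness in (\ref{categ-down}), and the cofinality-$\mu$ threshold in (\ref{categ-ns}) — and that (\ref{categ-ns}) really delivers $\aleph_0$-local character for every ordering $\ltl{\mu}{\delta}$ with $\delta<\mu^+$ a limit, not merely for chains whose cofinality happens to be $\omega$. The genuinely non-trivial mathematics, namely the no-long-splitting-chains argument and Shelah's downward categoricity transfer, I would import wholesale from \cite{shvi635,vandierennomax,gvv-v3} and \cite{sh394} respectively.
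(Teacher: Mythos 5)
Your proposal to cite the literature is exactly what the paper does: it cites \cite[Claim 1.7]{sh394} for (\ref{categ-stable}), \cite[Lemma 6.3]{sh394} for (\ref{categ-ns}), and \cite[Main Claim II.2.3]{sh394}, \cite[Theorem II.2.7]{sh394} (with details in \cite[Chapter 14]{baldwinbook09}) for (\ref{categ-down}), so the approach is the same. One caution about your sketch for (\ref{categ-ns}): the ``amplification'' step you describe --- turning a chain of cofinality $\omega$ witnessing failure of $\aleph_0$-local character into a $\lea$-increasing $\mu^+$-chain carrying a type that $\mu$-splits over \emph{every} member --- is not justified by the observation that splitting is inherited by extensions. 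Inheritance tells you an extension of the type still splits over the old $M_i$'s, but when you lengthen the chain past the union you must also arrange that the extended type splits over the \emph{new} members, and there is no canonical way to do this; the arguments in the sources you cite are in fact structured differently (e.g.\ building a tree of types to contradict stability in $\mu$, or exploiting EM models and categoricity directly). Since your proposal ultimately relies on the citations and not on this sketch, the proposal stands, but the sketch for (\ref{categ-ns}) as written would not turn into a proof if one tried to fill in the details.
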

\begin{proof}
  (\ref{categ-stable}) is \cite[Claim 1.7]{sh394}. (\ref{categ-ns}) is \cite[Lemma 6.3]{sh394}, and (\ref{categ-down}) were originally stated (with a lower Hanf number) in \cite[Main Claim II.2.3]{sh394} and \cite[Theorem II.2.7]{sh394}. A full proof (with discussion on whether it is possible to lower the $\hanfn{2}$ bound) can be found in \cite[Chapter 14]{baldwinbook09}.
\end{proof}

\begin{thm}\label{main-thm-technical-categ}
  Let $K$ be an abstract elementary class and let $\lambda$ be a cardinal such that $\text{cf} (\lambda) > \mu \ge \text{LS} (K)$. Let $\mathcal{F} := [\lambda, \hanf{\lambda})$, $\mathcal{F}' := [\mu, \hanf{\lambda})$. Assume:

  \begin{enumerate}
  \item $K_{\mathcal{F}'}$ has amalgamation and no maximal models.
  \item $K_\lambda$ is categorical.
  \item \label{tameness-hyp} $K$ is $(\mu, <\hanf{\lambda})$-tame.
  \end{enumerate}

  Then $K$ has a type-full good $\mathcal{F}$-frame.
\end{thm}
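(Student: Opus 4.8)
The plan is to reduce everything to Theorem \ref{main-thm-technical} by checking its hypotheses one at a time, feeding it the structural consequences of categoricity recorded in Fact \ref{categ-facts}. First observe that $\text{cf} (\lambda) > \mu$ forces $\mu < \lambda$, so Fact \ref{categ-facts} is applicable (its proof only needs amalgamation and no maximal models in cardinals $\le \lambda$, and $[\mu, \lambda] \subseteq \mathcal{F}'$). By Fact \ref{categ-facts}.(\ref{categ-stable}), $K$ is stable in every $\mu' \in [\mu, \lambda)$; in particular $K_\mu$ is stable, and since $K_\mu$ has amalgamation and no maximal models (as $\mu \in \mathcal{F}'$), Remark \ref{rmk-good-ordering-stable} shows that $\ltg := \ltl{\mu}{\omega}$ is an abstract universal ordering on $K_\mu$. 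As $\omega < \mu^+$ and $\mu < \text{cf} (\lambda)$, Fact \ref{categ-facts}.(\ref{categ-ns}) then tells us that $\mu$-splitting has $\aleph_0$-local character for $\ltg$-increasing chains in $K_\mu$. Next, $K_\mu \neq \emptyset$ by Löwenheim--Skolem applied to the model of size $\lambda$, and $K_\mu$ has joint embedding: $K_\lambda$ has joint embedding trivially by categoricity, every $M \in K_\mu$ extends (using no maximal models in $[\mu, \lambda)$) to some model $N_M \in K_\lambda$, and given $M_1, M_2 \in K_\mu$ we have $N_{M_1} \cong N_{M_2}$, so both $M_1$ and $M_2$ embed into $N_{M_2}$; a Löwenheim--Skolem submodel of $N_{M_2}$ of size $\mu$ containing the two images is the required amalgam in $K_\mu$.

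The one point requiring a small argument is that every model in $K_\lambda$ is $\mu^+$-model-homogeneous; by categoricity (and invariance of the property under isomorphism) it suffices to produce one $\mu^+$-model-homogeneous model of size $\lambda$. If $\lambda = \mu^+$, this is immediate from Fact \ref{gtu-existence}, which (using stability, amalgamation, and no maximal models in $K_\mu$) directly yields a model-homogeneous member of $K_{\mu^+} = K_\lambda$. If $\mu^+ < \lambda$, apply Lemma \ref{univ-embed} (legitimate: $\text{LS} (K) \le \mu^+ < \lambda$, $K_{[\mu,\lambda)}$ has amalgamation and no maximal models, and $K$ is stable in every $\mu' \in [\mu, \lambda)$ by Fact \ref{categ-facts}.(\ref{categ-stable})) to obtain, starting from some $N_0 \in K_{[\mu, \lambda)}$, a $\ltu$-increasing continuous chain $(N_i)_{i < \lambda}$ in $K_{[\mu, \lambda)}$ with each $N_{i+1}$ $\mu^+$-model-homogeneous. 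Put $N := \bigcup_{i < \lambda} N_i \in K_\lambda$. Given $M \lea N$ with $\|M\| \le \mu$, since $\mu < \text{cf} (\lambda)$ the universe of $M$ lies inside some $N_i$, so $M \lea N_i$ by coherence; then $M \lea N_{i+1}$ gives $M \ltu N_{i+1}$ by $\mu^+$-model-homogeneity, and $N_{i+1} \lea N$ gives $M \ltu N$ by Proposition \ref{ltl-basic-props}.(\ref{univ-trans}). Hence $N$ is $\mu^+$-model-homogeneous. Note this is exactly the step where $\text{cf} (\lambda) > \mu$ (not merely $\lambda > \mu$) is used essentially.

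With these in hand, all hypotheses of Theorem \ref{main-thm-technical} are verified: $\mu \ge \text{LS} (K)$ with $K_\mu \neq \emptyset$ having joint embedding; $\ltg := \ltl{\mu}{\omega}$ an abstract universal ordering on $K_\mu$ for which splitting has $\aleph_0$-local character; $\lambda > \mu$ with $K$ being $(\mu, <\hanf{\lambda})$-tame (hypothesis \ref{tameness-hyp}), $K_{[\mu, \hanf{\lambda})} = K_{\mathcal{F}'}$ having amalgamation and no maximal models (hypothesis (1)), and all models in $K_\lambda$ being $\mu^+$-model-homogeneous (just shown). Theorem \ref{main-thm-technical} then gives a type-full good $[\lambda, \hanf{\lambda})$-frame, that is, a type-full good $\mathcal{F}$-frame. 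Apart from the model-homogeneity verification, the argument is pure bookkeeping; the only mild worry is ensuring Fact \ref{categ-facts} genuinely applies under interval-restricted amalgamation rather than global amalgamation, which is fine since $[\mu,\lambda] \subseteq \mathcal{F}'$ and the cited proofs only use amalgamation and no maximal models in that range.
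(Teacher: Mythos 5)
Your proof is correct, and it takes a genuinely different route from the paper's at one point. The paper's proof of this theorem simply cites Lemma \ref{categ-conseq} to obtain joint embedding and $\mu^+$-model-homogeneity of the models in $K_\lambda$, then cites Fact \ref{categ-facts} for local character of splitting, and feeds Theorem \ref{main-thm-technical}. You instead re-derive the two consequences of Lemma \ref{categ-conseq} directly. For joint embedding your argument (embed both models of size $\mu$ into copies of the unique model of size $\lambda$ and take a Löwenheim--Skolem submodel) is essentially the paper's own proof of Lemma \ref{categ-conseq}.(\ref{categ-conseq-1}) unwound. For $\mu^+$-model-homogeneity your route is different in substance: you build the required homogeneous model of size $\lambda$ from \emph{below} via Lemma \ref{univ-embed}, which only uses stability in cardinals $<\lambda$ (available from Fact \ref{categ-facts}.(\ref{categ-stable})) together with $\text{cf}(\lambda)>\mu$; the paper's Lemma \ref{categ-conseq}.(\ref{categ-conseq-2}) instead builds a $\ltu$-increasing chain of models already of size $\lambda$, which requires stability \emph{in} $\lambda$ and hence relies on the superstability transfer Theorem \ref{lambdap-stability} (and on a case split about whether $K_\lambda$ has a maximal model, which in the present setting is moot since $\lambda\in\mathcal{F}'$). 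Your version makes the logical dependencies a little cleaner for this particular step, at the cost of redoing work the paper has packaged into Lemma \ref{categ-conseq}. Incidentally, you use $\ltg := \ltl{\mu}{\omega}$ where the paper's proof writes $\ltl{\lambda}{\omega}$; the paper's is a typo and yours is what is meant. Your closing remark that Fact \ref{categ-facts} applies under $\mathcal{F}'$-restricted amalgamation because its proof only needs the range $[\mu,\lambda]$ is also correct and is left implicit in the paper.
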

\begin{proof}
  First, $K_\lambda \neq \emptyset$ by categoricity. By Lemma \ref{categ-conseq}, $K_{\mathcal{F}'}$ has joint embedding and all models in $K_\lambda$ are $\mu^+$-model-homogeneous. By Fact \ref{categ-facts}, $\mu$-splitting has $\aleph_0$-local character for $\ltg$-chains, where $\ltg := \ltl{\lambda}{\omega}$. This shows all the hypotheses of Theorem \ref{main-thm-technical} are satisfied.
\end{proof}

Assuming categoricity in a high-enough successor, we obtain the tameness assumption:

\begin{thm}\label{main-thm-categ}
  Let $K$ be an abstract elementary class. Let $\mu := \hanfn{2} := \hanf{\hanf{\text{LS} (K)}}$. Let $\lambda := \mu^+$. Assume $K$ has amalgamation, joint embedding, and is categorical in some successor $\theta \ge \hanf{\lambda}$.

  Let $\mathcal{F} := [\lambda, \theta)$. Then there is a type-full good $\mathcal{F}$-frame with underlying AEC $K$.
\end{thm}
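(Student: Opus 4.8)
The plan is to reduce to Theorem \ref{main-thm-technical-categ}, which builds a type-full good $[\lambda, \hanf{\lambda})$-frame, and then to stretch that frame up to $\mathcal{F} = [\lambda, \theta)$ using the extension results of Section \ref{getting-sym}. Write $\mu := \hanfn{2}$ and $\lambda := \mu^+$ as in the statement; then $\mu \ge \text{LS} (K)$ and, $\lambda$ being a successor cardinal, $\text{cf} (\lambda) = \lambda > \mu$, while $\theta$ is a successor so $\text{cf} (\theta) = \theta \ge \hanf{\lambda} > \lambda$.

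First I would record the consequences of the hypotheses below $\theta$. By categoricity there is a model in $K_\theta$, so amalgamation and joint embedding give that $K$ has no maximal models of size $< \theta$; together with downward Löwenheim--Skolem this shows that $K_{[\mu, \theta)}$ has amalgamation and no maximal models (and joint embedding, by Lemma \ref{jep-from-amalgamation}). This is exactly what is needed for Fact \ref{categ-facts} (its conclusions concern only models of size $\le \theta$), so Fact \ref{categ-facts} applies: part \ref{categ-stable} gives that $K$ is stable in every cardinal of $[\mu, \theta)$, and part \ref{categ-down} — using that $\theta$ is a successor and $\theta > \lambda_0 \ge \hanfn{2} = \mu$ for every $\lambda_0 \in [\mu, \theta)$ — gives that $K$ is $(\mu, \lambda_0)$-tame and categorical in $\lambda_0$ for each such $\lambda_0$, with the model of size $\lambda_0$ saturated. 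In particular $K$ is $(\mu, <\theta)$-tame, hence $(\mu, <\hanf{\lambda})$-tame, and $K_\lambda$ is categorical. With $\text{cf}(\lambda) > \mu \ge \text{LS}(K)$, with $K_{[\mu, \hanf{\lambda})}$ having amalgamation and no maximal models (legitimate since $\hanf{\lambda} \le \theta$), with $K_\lambda$ categorical, and with $(\mu, <\hanf{\lambda})$-tameness, Theorem \ref{main-thm-technical-categ} now produces a type-full good $[\lambda, \hanf{\lambda})$-frame $\mathfrak{s}_0$.

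It remains to pass from $[\lambda, \hanf{\lambda})$ to $[\lambda, \theta)$. Regard $\mathfrak{s}_0$ as a $\goodm$ $[\lambda, \hanf{\lambda})$-frame; since $K_{[\lambda, \theta)}$ has amalgamation and no maximal models and $K$ is $(\lambda, <\theta)$-tame (which follows from $(\mu, <\theta)$-tameness because $\mu < \lambda$), Fact \ref{frame-ext} extends $\mathfrak{s}_0$ to a $\goodm$ $[\lambda, \theta)$-frame $\mathfrak{s}$ that is again type-full and, as $\mathfrak{s}_0$ has bs-stability, again has bs-stability. Finally, $\theta \ge \hanf{\lambda}$, so Corollary \ref{lambdap-sym} gives that $\mathfrak{s}$ has symmetry — the underlying point being that a failure of symmetry would, by Fact \ref{nosym-op}, yield the $(2, \lambda)$-order property of length $\theta$, hence instability in $2^\lambda$, contradicting Proposition \ref{eventual-stability} since $2^\lambda < \hanf{\lambda} \le \theta$. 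Thus $\mathfrak{s}$ satisfies all the axioms of a good frame, so it is a type-full good $\mathcal{F}$-frame with underlying AEC $K$.

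As for where the difficulty lies: the genuinely hard inputs are all imported, namely the downward categoricity-plus-tameness transfer of Fact \ref{categ-facts}.\ref{categ-down} (ultimately \cite{sh394}) and the order-property/nonstructure argument behind Corollary \ref{lambdap-sym}; the rest is bookkeeping of hypotheses. The one point requiring care that is peculiar to this statement is that the categoricity cardinal $\theta$ may strictly exceed $\hanf{\lambda}$, so one cannot merely quote Theorem \ref{main-thm-technical-categ}: one must build the frame on $[\lambda, \hanf{\lambda})$, extend it upward using the tameness that categoricity supplies, and re-establish symmetry at scale $\theta$ — which is exactly the step that consumes the hypothesis $\theta \ge \hanf{\lambda}$.
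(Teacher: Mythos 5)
Your proof is correct, and it follows the same overall reduction as the paper's (derive categoricity in $\lambda$ and tameness from Fact \ref{categ-facts}, then invoke Theorem \ref{main-thm-technical-categ}). However, you correctly identify and fill in a step that the paper's one-line proof leaves implicit: Theorem \ref{main-thm-technical-categ} only yields a good $[\lambda, \hanf{\lambda})$-frame, whereas the claim is a good $[\lambda, \theta)$-frame, and these differ when $\theta > \hanf{\lambda}$. Your extension step via Fact \ref{frame-ext} is the right way to bridge this, and it is a genuine improvement in completeness over the paper's terse ``Apply Theorem \ref{main-thm-technical-categ}.''

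Two smaller remarks. First, your final re-derivation of symmetry through Corollary \ref{lambdap-sym} is valid but unnecessary: since Fact \ref{frame-ext} preserves the restriction to $\lambda$ and the frame $\mathfrak{s}_0$ you start from already has symmetry, Fact \ref{symm-lambda} gives symmetry for the extension directly, without re-running the order-property argument. Second, where the paper deduces that $K$ has no maximal models \emph{globally} from $\theta \ge \hanf{\text{LS}(K)}$ (so $K$ has arbitrarily large models, and joint embedding finishes), you instead argue only that $K_{[\mu, \theta)}$ has no maximal models via categoricity and L\"owenheim--Skolem. That works for applying Fact \ref{frame-ext}, but your parenthetical claim that Fact \ref{categ-facts} is applicable because ``its conclusions concern only models of size $\le \theta$'' requires inspecting the proofs behind \cite{sh394} to confirm that no maximal models is only used locally; the paper's global argument sidesteps that inspection and is cleaner. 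Neither point affects correctness.
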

\begin{proof}
  Since $\theta \ge \hanf{\text{LS} (K)}$, $K$ has arbitrarily large models and so using joint embedding $K$ has no maximal models. By Fact \ref{categ-facts}, $K$ is categorical in $\lambda$ and $K$ is $(\mu, <\hanf{\lambda})$-tame. Apply Theorem \ref{main-thm-technical-categ}.
\end{proof}

Notice that one also obtains that categoricity (at a cardinal of high-enough cofinality) and tameness implies stability everywhere. This improves on \cite[Corollary 4.7]{b-k-vd-spectrum}:

\begin{thm}\label{main-thm-superstable}
  Let $K$ be an abstract elementary class with amalgamation and no maximal models. Assume $K$ is categorical in some $\lambda$ such that $\text{cf} (\lambda) > \mu \ge \text{LS} (K)$ and $K$ is $(\mu, \mu')$-tame. Then $K$ is stable in all $\theta \in [\text{LS} (K), \mu']$. In particular, if $\mu' = \infty$, then $K$ is stable everywhere.
\end{thm}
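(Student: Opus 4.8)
The plan is to split according to whether the cardinal in question lies below the categoricity cardinal $\lambda$. Fix $\theta \in [\text{LS}(K), \mu']$. If $\theta < \lambda$, then stability in $\theta$ is immediate from Fact \ref{categ-facts}.(\ref{categ-stable}), which uses only amalgamation, no maximal models, and categoricity in $\lambda > \theta$; so the real content lies in the cardinals $\theta$ with $\lambda \le \theta \le \mu'$.

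For such $\theta$ I would appeal to the superstability theorem (Theorem \ref{lambdap-stability}), after verifying that the running hypotheses of Section \ref{good-frame-without-symmetry} hold with the present $\mu$, with $\ltg := \ltl{\mu}{\omega}$, and with the parameter ``$\lambda$'' of Hypothesis \ref{hyp-nosym}.(\ref{amalg-lambdap}) taken to be $\theta$ itself. The ordering $\ltl{\mu}{\omega}$ is an abstract universal ordering on $K_\mu$ by Remark \ref{rmk-good-ordering-stable}, since $K_\mu$ has amalgamation and no maximal models (global hypotheses) and is stable by Fact \ref{categ-facts}.(\ref{categ-stable}) (note $\mu < \text{cf}(\lambda) \le \lambda$). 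The associated local-character cardinal equals $\aleph_0$: this is exactly Fact \ref{categ-facts}.(\ref{categ-ns}), which applies precisely because $\text{cf}(\lambda) > \mu$. Finally, $K_{[\mu,\theta]}$ has amalgamation, $K_{[\mu,\theta)}$ has no maximal models, and $K$ is $(\mu, \theta)$-tame by monotonicity of tameness in the second parameter from the assumed $(\mu, \mu')$-tameness (using $\theta \le \mu'$; when $\mu' = \infty$, $\mu$-tameness gives $(\mu,\theta)$-tameness for every $\theta$). Theorem \ref{lambdap-stability} then yields stability in $\theta$. Taking the two cases together gives stability in all of $[\text{LS}(K), \mu']$, and the ``in particular'' clause is the case $\mu' = \infty$.

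The main point to watch is that Theorem \ref{lambdap-stability} is proved by induction and, through Lemma \ref{lambdap-stability-technical}, consumes stability at unboundedly many smaller cardinals; so one must observe that the instantiated hypotheses persist throughout the interval $(\mu, \theta]$ — tameness is inherited by smaller second parameters, and amalgamation and no maximal models for $K_{[\mu,\theta]}$ and $K_{[\mu,\theta)}$ pass to subintervals — which licenses that internal induction. I expect this bookkeeping, together with the verification that the local-character cardinal is $\aleph_0$ (the one place the cofinality hypothesis is really used), to be the only delicate steps; the rest is a direct application of results already established. An alternative for the range $\lambda \le \theta < \hanf{\lambda}$, available when $\mu' \ge \hanf{\lambda}$, would be to extract a type-full good $[\lambda, \hanf{\lambda})$-frame from Theorem \ref{main-thm-technical-categ} and read stability off its bs-stability clause via Fact \ref{frame-stability}, then propagate upward using Proposition \ref{eventual-stability} and Fact \ref{frame-ext}; but routing everything through Theorem \ref{lambdap-stability} treats all $\theta$ uniformly and avoids the extra Hanf number.
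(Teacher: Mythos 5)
Your proof is correct and takes essentially the same route as the paper's: both invoke Fact \ref{categ-facts} to get stability below $\lambda$ and $\aleph_0$-local character of $\mu$-splitting for $\ltl{\mu}{\omega}$-chains (the place the cofinality hypothesis is used), and then apply Theorem \ref{lambdap-stability} with the section parameter instantiated as $\theta$. Your explicit bookkeeping of how the running hypotheses of Section \ref{good-frame-without-symmetry} descend through the induction in Theorem \ref{lambdap-stability} is a useful check that the paper leaves implicit, but it is the same argument.
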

\begin{proof}
  By Fact \ref{categ-facts}, $\mu$-splitting has $\aleph_0$-local character for $\ltg$-chains, where $\ltg := \ltl{\mu}{\omega}$ and $K$ is stable everywhere below and at $\mu$. Apply Theorem \ref{lambdap-stability} to see $K$ is stable everywhere in $(\mu, \mu']$.
\end{proof}

This result is much more local than the other results of this section. For example, we do not need to assume that $\mu' \ge \hanf{\mu}$. Moreover, as Theorem \ref{lambdap-stability} shows, the categoricity hypothesis can be replaced by $\mu$-splitting having $\aleph_0$-local character for $\ltg$-chains, for some abstract universal ordering $\ltg$ on $K_\mu$.

Assuming the generalized continuum hypothesis (GCH), we obtain a more general stability spectrum theorem:

\begin{thm}\label{main-thm-spectrum-2}
  Assume GCH. Let $K$ be an abstract elementary class with amalgamation and no maximal models. Assume $K$ is $\mu$-tame for $\mu \ge \text{LS} (K)$, $\ltg$ is an abstract universal ordering on $K_\mu$, and $\mu$-splitting has $\kappa$-local character for $\ltg$-increasing chains. Then $K$ is stable in all $\lambda \ge \mu$ with $\lambda = \lambda^{<\kappa}$.
\end{thm}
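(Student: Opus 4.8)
The plan is to reduce everything to Theorem~\ref{main-thm-spectrum}, which already delivers the conclusion once the standing assumptions of Section~\ref{good-frame-without-symmetry} (Hypothesis~\ref{hyp-nosym}) are in force. Fix a target cardinal $\lambda \ge \mu$ with $\lambda = \lambda^{<\kappa}$. If $\lambda = \mu$, then $K_\mu$ is stable directly from the existence of the abstract universal ordering $\ltg$ (Remark~\ref{rmk-good-ordering-stable}), so assume $\lambda > \mu$. I would then instantiate Hypothesis~\ref{hyp-nosym} with this $\mu$, this $\ltg$, and this $\lambda$, and apply Theorem~\ref{main-thm-spectrum} to conclude that $K$ is stable in $\lambda$.

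The work is in verifying the clauses of Hypothesis~\ref{hyp-nosym}. For clause (1): $K$ is an AEC with $\mu \ge \text{LS}(K)$ by assumption, and $K_\mu \neq \emptyset$ since $K$ has no maximal models and hence (using the L\"owenheim--Skolem axiom) has a model of every size $\ge \text{LS}(K)$. For clause (2): $\ltg$ is given to be an abstract universal ordering on $K_\mu$, which by Remark~\ref{rmk-good-ordering-stable} forces $K_\mu$ to have amalgamation, no maximal models, and stability. For clause (3): I would replace the given regular $\kappa$ by the least regular $\kappa_0$ such that $\mu$-splitting has $\kappa_0$-local character for $\ltg$-increasing chains; this $\kappa_0$ exists and satisfies $\kappa_0 \le \mu^+$ by Remark~\ref{ns-local-stable}, and clearly $\kappa_0 \le \kappa$. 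For clause (4): $(\mu,\lambda)$-tameness is immediate from $\mu$-tameness (which is $(\mu,\infty)$-tameness); and $K_{[\mu,\lambda]}$ has amalgamation while $K_{[\mu,\lambda)}$ has no maximal models — these descend from the global amalgamation and ``no maximal models'' hypotheses via the L\"owenheim--Skolem axiom (and the Fact on propagating amalgamation through intervals of cardinals), using $\mu \ge \text{LS}(K)$ to keep all amalgams and proper extensions inside the relevant interval.

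It then remains to reconcile the cardinal arithmetic: Theorem~\ref{main-thm-spectrum} (applied with the least locality cardinal $\kappa_0$) yields stability in $\lambda$ whenever $\lambda^{<\kappa_0} = \lambda$, whereas our hypothesis on $\lambda$ reads $\lambda^{<\kappa} = \lambda$. Since $\kappa_0 \le \kappa$ we have $\lambda \le \lambda^{<\kappa_0} \le \lambda^{<\kappa} = \lambda$, so $\lambda = \lambda^{<\kappa_0}$ and Theorem~\ref{main-thm-spectrum} applies, giving stability in $\lambda$ and hence in all such $\lambda \ge \mu$. I do not expect a genuine obstacle here; the only points requiring a little care are the descent of amalgamation and of ``no maximal models'' from $K$ to the intervals $[\mu,\lambda]$ and $[\mu,\lambda)$, and the harmless passage from the given $\kappa$ to the least admissible regular locality cardinal $\kappa_0$.
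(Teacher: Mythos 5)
Your proposal is correct and matches the paper's own proof, which also handles the case $\lambda = \mu$ via Remark~\ref{rmk-good-ordering-stable} and reduces $\lambda > \mu$ to Theorem~\ref{main-thm-spectrum} after instantiating Hypothesis~\ref{hyp-nosym}. Your extra care in passing from the given $\kappa$ to the least admissible regular $\kappa_0$ and checking $\lambda^{<\kappa_0} = \lambda$ is exactly the bookkeeping the paper leaves implicit (Hypothesis~\ref{hyp-nosym}.(3) already fixes $\kappa$ to be least), so there is no genuine deviation.
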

\begin{proof}
  $K$ is stable in $\mu$ since we have an abstract universal ordering on $K_\mu$. If $\lambda > \mu$, the result follows from Theorem \ref{main-thm-spectrum}.
\end{proof}
\begin{remark}
  If $K$ is the class of models of a complete first-order theory, the conditions for stability given by Corollary \ref{main-thm-spectrum-2} are very close\footnote{The least regular cardinal $\kappa$ such that splitting has $\kappa$-local character will be at most the \emph{successor} of $\kappa (T)$.} to optimal (see \cite[Corollary III.3.8]{shelahfobook}).
\end{remark}
\begin{remark}
  Let $K$ be an abstract elementary class with amalgamation and no maximal models. Assume $K$ is $\chi$-tame and stable in some $\mu \ge \hanf{\chi}$. Then \cite[Theorem 4.13]{tamenessone} shows that for some $\kappa < \hanf{\chi}$, $\mu$-splitting has $\kappa$-local character. Thus we have:
\end{remark}
\begin{cor}\label{main-thm-spectrum-3}
  Assume GCH. Let $K$ be an abstract elementary class with amalgamation and no maximal models. Assume $K$ is $\chi$-tame and stable in some $\mu \ge \text{LS} (K)$. Then there is $\kappa < \hanf{\chi}$ such that $K$ is stable in all $\lambda \ge \mu$ with $\lambda^{<\kappa} = \lambda$.
\end{cor}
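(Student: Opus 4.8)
The plan is to reduce this to the preceding Remark (i.e.\ to \cite[Theorem 4.13]{tamenessone}) together with Theorem \ref{main-thm-spectrum-2}, after first boosting the stability hypothesis up past the Hanf number $\hanf{\chi}$. Since $K$ is $\chi$-tame and stable in $\mu$, the stability transfer of \cite[Corollary 6.4]{tamenessone} gives that $K$ is stable in every $\lambda \geq \mu$ with $\lambda^{\mu} = \lambda$; using GCH, any successor cardinal above $\max(\mu, \hanf{\chi})$ satisfies this (Hausdorff's formula plus GCH yields $\lambda^{\mu} = \lambda$ for successor $\lambda > \mu$), so in particular $K$ is stable in some cardinal $\mu_0$ with $\mu_0 \geq \hanf{\chi}$ and $\mu_0 \geq \chi$. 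Now the Remark above applies at $\mu_0$, producing $\kappa < \hanf{\chi}$ such that $\mu_0$-splitting has $\kappa$-local character.

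Next I would feed this into Theorem \ref{main-thm-spectrum-2} with $\mu_0$ in the role of the base cardinal. As $K$ has global amalgamation and no maximal models, the same holds for $K_{\mu_0}$; combined with stability in $\mu_0$, Remark \ref{rmk-good-ordering-stable} shows that $\ltu$ is an abstract universal ordering on $K_{\mu_0}$, and $\mu_0$-splitting has $\kappa$-local character for $\ltu$-increasing chains, since every such chain is in particular $\leq$-increasing. Moreover $K$ is $\mu_0$-tame because $\mu_0 \geq \chi$. Hence Theorem \ref{main-thm-spectrum-2}, applied with $\mu_0$ in place of $\mu$, yields that $K$ is stable in every $\lambda \geq \mu_0$ with $\lambda^{<\kappa} = \lambda$.

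The step I expect to be the main obstacle is covering the bottom of the range, i.e.\ showing stability in all $\lambda$ with $\mu \leq \lambda < \mu_0$ and $\lambda^{<\kappa} = \lambda$, since boosting to $\mu_0$ a priori discards these cardinals. Under GCH, $\lambda^{<\kappa} = \lambda$ forces $\text{cf}(\lambda) \geq \kappa$; if $\kappa > \mu$ this already gives $\lambda^{\mu} = \lambda$, and stability in $\lambda$ follows directly from \cite[Corollary 6.4]{tamenessone} as in the first paragraph. If $\kappa \leq \mu$, I would instead rerun the argument of the first two paragraphs with base cardinal $\mu$ itself: $K_{\mu}$ is stable by hypothesis with amalgamation and no maximal models, hence carries $\ltu$ as an abstract universal ordering; $\mu$-splitting has $\kappa$-local character because the locality cardinal furnished by \cite[Theorem 4.13]{tamenessone} does not depend on the choice of base cardinal above $\text{LS}(K)$; and $K$ is $\mu$-tame in the main case $\chi \leq \mu$ (the case $\chi > \mu$ being handled by passing to $\max(\mu,\chi)$, which is legitimate once stability there is available from the boost). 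Theorem \ref{main-thm-spectrum-2} with base $\mu$ then covers the remaining cardinals, completing the argument.
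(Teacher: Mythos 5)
There is a genuine gap in your treatment of the bottom range. The paper's own proof is a two-line case split on whether $\mu < \hanf{\chi}$. If $\mu < \hanf{\chi}$, it simply takes $\kappa := \mu^+$ (still $< \hanf{\chi}$, as $\hanf{\chi}$ is a limit cardinal); then $\lambda^{<\kappa} = \lambda^\mu$, and \cite[Corollary 6.4]{tamenessone} applied directly to $\chi$-tameness and stability in $\mu$ gives stability in every such $\lambda$, with no local character result needed at all. If $\mu \ge \hanf{\chi}$, the remark preceding the corollary applies at $\mu$ itself, producing $\kappa < \hanf{\chi}$ with $\mu$-splitting having $\kappa$-local character, and Theorem \ref{main-thm-spectrum-2} finishes. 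Your route, via a boosted intermediate cardinal $\mu_0 \ge \hanf{\chi}$, necessarily reopens the range $[\mu, \mu_0)$, and the way you close it is where the argument breaks.

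Specifically, in the sub-case $\kappa \le \mu$ (relevant exactly when $\mu < \hanf{\chi}$, since otherwise $\mu_0$ could be taken equal to $\mu$), you assert that $\mu$-splitting already has $\kappa$-local character ``because the locality cardinal furnished by \cite[Theorem 4.13]{tamenessone} does not depend on the choice of base cardinal above $\text{LS}(K)$.'' This is an unjustified claim: the remark the paper cites explicitly assumes stability in some $\mu \ge \hanf{\chi}$ before invoking \cite[Theorem 4.13]{tamenessone}, so there is no local-character bound available at a $\mu$ below $\hanf{\chi}$. Moreover, your parenthetical patch for the case $\chi > \mu$ --- ``passing to $\max(\mu,\chi)$'' --- only yields stability at cardinals $\ge \chi$, leaving the interval $[\mu, \chi)$ uncovered, which is still part of what the corollary asserts. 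Both problems disappear at once if one does what the paper does: when $\mu < \hanf{\chi}$, simply choose $\kappa := \mu^+$ so that $\lambda^{<\kappa} = \lambda$ is exactly the hypothesis of \cite[Corollary 6.4]{tamenessone} and the local-character machinery is never invoked. Since the corollary only asks for the \emph{existence} of some $\kappa < \hanf{\chi}$, there is no need to insist on the particular $\kappa$ obtained from the remark at $\mu_0$.
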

\begin{proof}
  If $\mu < \hanf{\chi}$, then by \cite[Corollary 6.4]{tamenessone} one can take $\kappa := \mu^+$, so assume $\mu \ge \hanf{\chi}$. By the previous remark, there is $\kappa < \hanf{\chi}$ such that $\mu$-splitting has $\kappa$-local character. The result now follows from Theorem \ref{main-thm-spectrum-2}.
\end{proof}
\begin{remark}
In \cite{sv-infinitary-stability-v3}, we use different methods to prove Corollary \ref{main-thm-spectrum-3} in ZFC. We do not know whether Corollary \ref{main-thm-spectrum-2} also holds in ZFC (although it is clear from the proof that much less than GCH is needed).
\end{remark}

We can also apply our good frame to the question of uniqueness of limit models:

\begin{thm}[Uniqueness of limit models]\label{uniq-limit}
  Assume the hypotheses of Theorem \ref{main-thm-technical-categ} hold. Then $K$ has a unique limit model in any $\mu' \in \mathcal{F}$. In fact, if $M_0 \in K_{\mu'}$ and $M_\ell$ is $(\mu', \delta_l)$-limit over $M_0$ for $\ell = 1,2$ and $\delta_l$ a limit ordinal, then $M_1 \cong_{M_0} M_2$.

  In particular, if $K$ has amalgamation and no maximal models, is categorical in $\lambda$ and is $\mu$-tame for some $\mu < \text{cf} (\lambda)$, then $K$ has a unique limit model in any $\mu' \ge \lambda$.

\end{thm}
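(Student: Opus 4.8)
The plan is to derive everything from the good frame produced by Theorem \ref{main-thm-technical-categ}. Under its hypotheses we obtain a type-full good $\mathcal{F}$-frame $\mathfrak{s}$ with underlying AEC $K$, where $\mathcal{F} = [\lambda, \hanf{\lambda})$. For $\mu' \in \mathcal{F}$, the restriction $\mathfrak{s} \upharpoonright \mu'$ is a type-full good $\mu'$-frame: all the frame axioms localize to $K_{\mu'}$ (using monotonicity to bring the various witnesses down to size $\mu'$), and stability in $\mu'$ is just the bs-stability axiom of the good $\mathcal{F}$-frame. So it is enough to prove that a good $\mu'$-frame has uniqueness of limit models in the strong sense: if $M_0 \in K_{\mu'}$ and $M_\ell$ is $(\mu', \delta_\ell)$-limit over $M_0$ for $\ell = 1,2$, then $M_1 \cong_{M_0} M_2$.

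First I would reduce to the case where $\delta_1$ and $\delta_2$ are regular cardinals: passing to a continuous cofinal subsequence through a witnessing $\ltu$-increasing chain and applying Proposition \ref{ltl-basic-props}.(\ref{univ-trans}) shows that a $(\mu', \delta)$-limit model over $M_0$ is also $(\mu', \text{cf}(\delta))$-limit over $M_0$. If $\delta_1 = \delta_2$, no forking is needed: a standard back-and-forth between the two witnessing chains, alternately invoking universality to extend the partial isomorphism, produces $M_1 \cong_{M_0} M_2$. The remaining case is $\delta_1 \neq \delta_2$, say $\delta_1 < \delta_2$. Here I would use symmetry of $\mathfrak{s} \upharpoonright \mu'$ --- the crucial output of Section \ref{getting-sym} --- together with local character, extension and uniqueness of $\s$-forking, in a GVV-style argument (compare \cite{gvv-v3, vandierennomax}): one builds inside the $(\mu', \delta_2)$-limit model $M_2$ a $\ltu$-increasing chain of length $\delta_1$ with first term $M_0$ and union equal to $M_2$, thereby witnessing that $M_2$ is also $(\mu', \delta_1)$-limit over $M_0$, and then the equal-length case finishes. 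The bookkeeping interleaves ``enumerate a new element of $M_2$'' steps with ``extend to a universal model realizing the $\s$-nonforking extensions of all types seen so far'' steps, symmetry being exactly what is needed to verify that the union is universal over each initial segment.

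The main obstacle is precisely this last step --- transferring the back-and-forth across limit lengths of different cofinalities. It genuinely fails for a merely $\goodms{S}$ frame, and it is the symmetry property (established in Section \ref{getting-sym} via the order property) that makes it go through; this is why we need the full strength of the good frame rather than just the results of Sections \ref{poor-man-frame}--\ref{good-frame-without-symmetry}.

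Finally, for the ``in particular'' clause, suppose $K$ has amalgamation and no maximal models, is categorical in $\lambda$, and is $\mu$-tame with $\mu < \text{cf}(\lambda)$ (so in particular $\mu < \lambda$). Then Facts \ref{categ-facts} and \ref{categ-conseq} give the hypotheses of Theorem \ref{main-thm-technical-categ}, hence a type-full good $[\lambda, \hanf{\lambda})$-frame; moreover $\mu$-tameness yields $(\lambda, <\infty)$-tameness, so by Fact \ref{frame-ext} (using global amalgamation and no maximal models) this frame extends to a $\goodm$ $[\lambda, \infty)$-frame, which has symmetry by Corollary \ref{lambdap-sym} and bs-stability, hence is a good $[\lambda, \infty)$-frame. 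Restricting it to any $\mu' \ge \lambda$ and applying the argument above yields a unique limit model in $\mu'$.
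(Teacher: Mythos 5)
Your overall plan — obtain the good $\mathcal{F}$-frame from Theorem \ref{main-thm-technical-categ}, restrict to a single cardinal $\mu'$, and derive uniqueness of limit models from the frame — is exactly the paper's route. The difference is that where you attempt to re-derive the key fact (a good $\lambda'$-frame implies uniqueness of $(\lambda', \delta)$-limit models over a common base), the paper simply cites it: \cite[Lemma II.4.8]{shelahaecbook}, with a detailed write-up in \cite[Theorem 9.2]{ext-frame-jml}. Your first three steps (reduction to regular $\delta_\ell$, and the equal-cofinality back-and-forth using universality alone) are correct and routine. But the remaining step — showing that a $(\mu', \delta_2)$-limit over $M_0$ is also $(\mu', \delta_1)$-limit over $M_0$ when $\text{cf}(\delta_1)\neq\text{cf}(\delta_2)$ — is precisely the nontrivial content of Shelah's lemma, and the sentence ``the bookkeeping interleaves\dots symmetry being exactly what is needed'' does not yet constitute a proof. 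The published arguments (Shelah's via brimmedness; the GVV/VanDieren line via reduced towers) each need substantial machinery that your sketch does not supply. You correctly identify symmetry as the essential ingredient, but you should either cite one of these results or actually carry out the construction. As written this is a genuine gap, though the direction is right.

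On the other hand, your treatment of the ``in particular'' clause is more careful than the paper's. Theorem \ref{main-thm-technical-categ} by itself only yields a good frame on $\mathcal{F}=[\lambda,\hanf{\lambda})$, so to get uniqueness of limit models in \emph{every} $\mu'\ge\lambda$ one does indeed need to extend the frame via Fact \ref{frame-ext} using full tameness and global amalgamation/no maximal models, exactly as you say. The paper leaves that extension implicit, so flagging it is a genuine improvement in exposition.
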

\begin{proof}
  By Theorem \ref{main-thm-technical-categ}, $K$ has a good $\mathcal{F}$-frame $\mathfrak{s}$. In particular, $K$ is stable in $\mu'$, so one can iterate Fact \ref{gtu-existence} to build a $(\mu', \delta)$-limit model for any desired $\delta < \left(\mu'\right)^+$. To see uniqueness, apply \cite[Lemma II.4.8]{shelahaecbook} (see \cite[Theorem 9.2]{ext-frame-jml} for a detailed proof of that result).
\end{proof}

We see this theorem as an encouraging approximation to generalizing the upward categoricity transfer result of \cite{tamenessthree} (which assumes categoricity in a successor cardinal) to categoricity in a limit cardinal.

\begin{remark}
  Uniqueness of limit models of cardinality $\mu$ was asserted to follow from categoricity in some $\lambda^+ > \mu$ already in \cite{shvi635}. However, an error was found by VanDieren in 1999. VanDieren \cite{vandierennomax,nomaxerrata} proves uniqueness with the additional assumption that unions of amalgamation bases are amalgamation bases (but does not use tameness). It is still open whether uniqueness of limit models follows from categoricity only. In \cite{gvv-v3}, it is shown that uniqueness of limit models follows from a superstability-like assumption akin to $\aleph_0$-local character of $\mu$-splitting, amalgamation, and a unidimensionality assumption (the authors initially claimed to prove the result without unidimensionality but the claim was later retracted). 
\end{remark}
\begin{remark}
  A variation on Theorem \ref{uniq-limit} is \cite[Corollary 6.10]{bg-v7}, which uses stronger locality assumptions but manages to obtain uniqueness of limit models below the categoricity cardinal without any cofinality restriction.
\end{remark}

\section{Conclusion and further work}\label{conclusion}

Assuming amalgamation, joint embedding, no maximal models, and tameness, we have given superstability-like conditions under which an abstract elementary class has a type-full good frame $\mathfrak{s}$, i.e.\ a forking-like notion for 1-types. These arguments would work just as well to get a notion of independence for all $n$-types, with $n < \omega$. The proof of extension breaks down, however, for types of infinite length (difficulties in obtaining the extension property in the absence of compactness is one of the reasons\footnote{Another reason was Shelah's example (see \cite[Section 4]{hyttinen-lessmann}) of an $\aleph_0$-stable non-simple diagram, but we have shown that we do not get into trouble as long as we restrict the base of our types to be sufficiently saturated models.} it was assumed as an axiom in \cite{bg-v7}).

Shelah's approach around this in \cite[Chapter II]{shelahaecbook} is to show that if the frame is \emph{weakly successful} (a uniqueness condition for certain kinds of amalgamations), then it has a notion of forking for types of models. In \cite[Chapter III]{shelahaecbook}, Shelah has several hundreds of pages of approximations on when weak successfulness can be transferred across cardinals (many of his difficulties come from the fact he is not assuming amalgamation or no maximal models), but even assuming $\mathfrak{s} \upharpoonright \lambda$ is weakly successful for every $\lambda$, it is not clear how we can get a good forking notion for models of different sizes. This is one direction further work could focus on.

Another (non-orthogonal) direction would be to find applications for such a forking notion. As mentioned in the previous section, we believe it could be useful in proving categoricity transfer theorems. Moreover, the frame built in Section \ref{good-frame-without-symmetry} is only well-behaved for $\mu^+$-saturated models, and it would be interesting to know when the class of $\mu^+$-saturated models is an AEC. This calls for tools to deal with unions of saturated models and we plan to explore this further in future work\footnote{Since this paper was first submitted, several extensions have been written. In \cite{indep-aec-v4}, the argument here is axiomatized, the cofinality assumption on the categoricity cardinal is removed and a global independence relation (for types of all lengths) is built (assuming more hypotheses). This is used to prove an approximation to Shelah's categoricity conjecture. In \cite{bv-sat-v3}, it is shown that it follows from $\aleph_0$-local character of splitting and tameness that, for all high-enough cardinals $\lambda$, the union of a chain of $\lambda$-model-homogeneous models is $\lambda$-model-homogeneous. All these works ultimately rely on the methods of this paper.}.

\bibliographystyle{amsalpha}
\bibliography{building-a-good-frame-from-tameness}

\end{document}